\documentclass[11pt,reqno]{amsart}
\usepackage[T1]{fontenc}
\usepackage{lmodern}
\usepackage{amsmath,amssymb,amsthm}
\usepackage[margin=30mm]{geometry}
\usepackage{microtype}
\usepackage[hidelinks]{hyperref}
\hypersetup{pdftitle={Sharp Euler-factor positivity for the BG-rank-refined five-core crank},
  pdfsubject={Cyclotomic positivity and core partitions},
  pdfkeywords={integer partitions, five-core crank, BG-rank, theta series}}
\allowdisplaybreaks[1]
\numberwithin{equation}{section}
\newtheorem{theorem}{Theorem}[section]
\newtheorem{proposition}[theorem]{Proposition}
\newtheorem{lemma}[theorem]{Lemma}
\newtheorem{corollary}[theorem]{Corollary}
\theoremstyle{remark}
\newtheorem{remark}[theorem]{Remark}
\newtheorem{example}[theorem]{Example}
\newcommand{\Z}{\mathbb Z}
\newcommand{\Nzero}{\mathbb Z_{\ge0}}
\newcommand{\BG}{\operatorname{BG}}
\newcommand{\cc}{\operatorname{cc}}
\newcommand{\A}{\mathcal A}
\newcommand{\PP}{\mathcal P}
\newcommand{\RR}{\mathcal R}
\newcommand{\GG}{\mathcal G}

\title[Sharp positivity for the five-core crank]{Sharp Euler-factor positivity for the BG-rank-refined five-core crank}
\author{Huan Xiong}
\address{Institute for Advanced Study in Mathematics, Harbin Institute of Technology,
Heilongjiang 150001, P. R. China}
\email{huan.xiong.math@gmail.com}

\date{}
\subjclass[2020]{Primary 05A17, 11P83; Secondary 05A19}
\keywords{Integer partitions, five-core crank, BG-rank, cyclotomic positivity, theta series, partition inequalities}

\begin{document}
\raggedbottom
\begin{abstract}
In this paper, we complete the resolution of Stanton's problem OPAC--005 by proving its fixed-BG-rank assertion. We further establish a sharp nine-factor positivity theorem for the joint generating function.

Let $K_{n,j}(z)$ enumerate partitions of $5n+4$ with BG-rank $j$ by the integer five-core crank prescribed in OPAC--005, and put $[5]_z=1+z+z^2+z^3+z^4$. We prove that $K_{n,j}(z)/[5]_z$ has nonnegative integer Laurent coefficients for every $n\ge0$ and $j\in\Z$. The refined equidistribution theorem of Berkovich and Garvan gives divisibility; the new assertion is coefficientwise nonnegativity.

Set $\Psi(u,z,q)=\sum_{n\ge0}\sum_{j\in\Z}q^{n+1}u^jK_{n,j}(z)$ and $D(q)=\prod_{d\ge1}(1-q^{2d})$. For every nonnegative integer $M$, we show that $D(q)^M\Psi(u,z,q)/[5]_z$ has nonnegative coefficients if and only if $M\le9$. Thus positivity survives after removing nine of the ten partition factors supplied by the core--quotient decomposition. A coefficient equal to $-4$ at $M=10$ proves sharpness.

The key step is a compensation between lattice terms of different core sizes that preserves both the BG-rank and the full integer crank. Theta identities and three local coefficient inequalities provide the required nonnegative contributions, and an explicit allocation ensures that none is used twice. The resulting factorization also yields a nine-color convolution and coefficientwise finite-difference inequalities through order nine.
\end{abstract}
\maketitle

\section{Introduction}

Stanton's problem OPAC--005 ~\cite{Stanton} asks whether division by the fifth
cyclotomic polynomial gives a nonnegative quotient for a prescribed
integer five-core crank, both without restriction and with the
BG-rank fixed. We prove the fixed-BG-rank assertion and determine
how many Euler factors can be removed while preserving positivity
in both weights. The sharp result has a core--quotient interpretation:
the ten quotient partitions supply ten partition factors, and
retaining one of them suffices for positivity, whereas removing
all ten does not.

Write $\Z$ for the integers and $\Nzero=\{0,1,2,\ldots\}$.
A partition $\lambda=(\lambda_1,\ldots,\lambda_\ell)$ is a finite
weakly decreasing sequence of positive integers. Write
$\lambda\vdash N$ if $|\lambda|=\sum_i\lambda_i=N$, and put
$\lambda_i=0$ for $i>\ell$. The empty partition has size zero.
The Young diagram of $\lambda$ has $\lambda_i$ cells in row $i$;
its conjugate $\lambda'$ records column lengths.
Rows and columns are numbered from one; cell $(i,k)$ has content
$k-i$ and hook length $\lambda_i-k+\lambda'_k-i+1$.
For a positive integer $t$, a partition is a \emph{$t$-core}
if no hook length is divisible by $t$.
For a statement $E$, let $\mathbf1_E$ be its indicator.
The \emph{BG-rank} is
\begin{equation}\label{eq:bg-def}
 \BG(\lambda)=\sum_{(i,k)\in\lambda}(-1)^{i+k}
 =\sum_{i\ge1}(-1)^{i+1}\mathbf1_{\{\lambda_i\text{ odd}\}}.
\end{equation}
Berkovich and Garvan introduced this statistic in their work on the
Andrews--Stanley refinement of Ramanujan's congruence
modulo five~\cite{BG06}; see also~\cite{CJW}.

For $r\in\{0,1,2,3,4\}$, let $R_r(\lambda)$ count the cells whose
contents are congruent to $r$ modulo five. We use the integer statistic
\begin{equation}\label{eq:crank-def}
 \cc(\lambda)=1+2\bigl(R_0(\lambda)-R_4(\lambda)\bigr)
                  +R_1(\lambda)-R_3(\lambda).
\end{equation}
Its residue modulo five is the five-core crank of
Berkovich and Garvan~\cite[(4.1)]{BG08}.
For $|\lambda|\equiv4\pmod5$, it is exactly the integer lift
specified in OPAC--005~\cite{Stanton}, as verified in
Section~\ref{sec:coordinates}; also $\cc(\varnothing)=1$.
The choice of integer lift matters: changing crank values by
multiples of five preserves residue classes but can change the
signs of the quotient coefficients.

Let $p(N)$ count partitions of $N$, and let $p_j(N)$ count those
with BG-rank $j$. Ramanujan's congruence
$p(5n+4)\equiv0\pmod5$ has combinatorial explanations through
Dyson's rank, the Andrews--Garvan crank, and core--quotient
methods~\cite{Dyson,ASD,AG,GKS}.
Berkovich and Garvan~\cite[Theorem~4.1]{BG08} proved that the
five-core crank remains equidistributed modulo five on partitions
of $5n+4$ with any fixed BG-rank $j$.
Thus, for
\begin{equation}\label{eq:K-def}
 K_{n,j}(z)=\sum_{\substack{\lambda\vdash5n+4\\\BG(\lambda)=j}}
                         z^{\cc(\lambda)},
 \qquad [5]_z=1+z+z^2+z^3+z^4,
\end{equation}
with $n\in\Nzero$ and $j\in\Z$, the divisibility
$[5]_z\mid K_{n,j}(z)$ in $\Z[z,z^{-1}]$ is already known.
Indeed, equidistribution gives vanishing at a primitive fifth root
of unity, whose minimal polynomial is $[5]_z$.
It does not imply coefficientwise positivity of the quotient:
\begin{equation}\label{eq:divisibility-example}
 1+z^2+z^3+z^4+z^6=[5]_z(1-z+z^2).
\end{equation}
The OPAC--005 page attributes the unrestricted positive-quotient
assertion to Bringmann, Ono, and Rolen, citing a July 2019
communication from Ono~\cite{Stanton}. We derive the following theorem, which proves the fixed-BG-rank assertion for the specified integer crank, thus completing the whole proof of Stanton's problem
OPAC--005~\cite{Stanton}.

\begin{theorem}[OPAC--005 with fixed BG-rank]\label{thm:main}
For every $n\in\Nzero$ and $j\in\Z$,
\begin{equation}\label{eq:main}
 H_{n,j}(z):=\frac{K_{n,j}(z)}{[5]_z}
        \in\Nzero[z,z^{-1}].
\end{equation}
\end{theorem}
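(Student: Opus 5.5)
We plan to pass to core--quotient coordinates and write the full generating function $\Psi(u,z,q)=\sum_{n,j}q^{n+1}u^jK_{n,j}(z)$ as an explicit lattice sum times a product of Euler factors. A partition $\lambda$ corresponds bijectively to its $5$-core $\mu$ together with a $5$-quotient $(\lambda^{(0)},\dots,\lambda^{(4)})$. Equivalently, we use the abacus vector $\vec n=(n_0,\dots,n_4)\in\Z^5$ with $\sum n_i=0$, together with the quotient. We will check the following facts.

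\begin{enumerate}
\item The residue counts $R_r(\lambda)$ split as $R_r(\mu)$ plus $|\lambda|_{\mathrm{quot}}=\sum_i|\lambda^{(i)}|$. The quotient contribution to each residue class is the same, since every $5$-rim hook has one cell of each content residue.
\item Consequently $\cc(\lambda)=\cc(\mu)$, because the weights $2,1,0,-1,-2$ sum to zero.
\item $\BG(\lambda)$ depends only on the core together with a parity pairing of the quotient. Following Berkovich--Garvan, the BG-rank is a linear form in $\vec n$; the quotient partitions contribute the factor $1/(q;q)_\infty^{5}$ in $q^5$ twice over, via a bilinear split by parity, which matches the ``ten factors'' of the abstract, i.e.\ $D(q)^{-10}$ after rescaling.
\end{enumerate}

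After the substitution $q^5\mapsto q$ and restriction to $|\lambda|\equiv4\pmod5$, the generating function takes the form
\[
\Psi(u,z,q)=\frac{1}{D(q)^{10}}\sum_{\vec n\in L}u^{\ell(\vec n)}z^{c(\vec n)}q^{Q(\vec n)},
\]
where $L$ is a rank-$4$ lattice coset, $\ell$ and $c$ are linear forms, and $Q$ is a positive definite quadratic form. This reduces the theorem to the claim that $D(q)^{M}\Psi/[5]_z$ is nonnegative for some $M\ge0$. Indeed $1/D(q)^{10-M}$ has nonnegative coefficients, so nonnegativity at any single $M\le 10$ already yields Theorem~\ref{thm:main}. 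The strongest useful target is $M=9$, but $M=0$ suffices here.

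The next step is to handle the division by $[5]_z$ on the lattice side. The fifth-root-of-unity vanishing, which follows from the Berkovich--Garvan equidistribution for fixed $j$, means that within each fixed $(u,q)$-exponent the $z$-exponents of the lattice terms are equidistributed mod $5$. We will seek an explicit involution-like grouping of lattice points into $5$-tuples $\vec n,\sigma\vec n,\dots,\sigma^4\vec n$. Here $\sigma$ should be a linear (affine) map preserving $\ell$, and, up to a compensating $q$-shift, preserving $Q$, with $c(\sigma\vec n)=c(\vec n)+1$. Each orbit then contributes $z^{c}[5]_z$ times a monomial. Where the orbit is exact, the quotient is manifestly nonnegative.

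The obstacle is that such $\sigma$ will not preserve $Q$ exactly. The $5$-tuples will have different $q$-powers, since the cores involved have different sizes, so pure lattice symmetry gives $[5]_z$-divisibility only up to differences of terms. We intend to absorb these differences using the Euler factor $1/D(q)$ (one retained factor). The approach is to write the lattice sum as a sum of theta products in sublattices, via Jacobi triple product and quintuple product identities. Each discrepancy then becomes a combination like $z^c(q^a-q^b)\cdot\theta$. We then aim to show that $\theta(q)/D(q)$ minus such shifts has nonnegative coefficients. Concretely, we want local inequalities of the form ``coefficients of $q^{a}\vartheta(q)/D(q)$ dominate those of $q^{b}\vartheta'(q)/D(q)$.'' We would first try to establish these by comparing with $1/(q^2;q^2)_\infty$ term by term, using monotonicity of partition counts into even parts. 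An explicit allocation is needed so that each positive contribution compensates at most one negative term.

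We expect this compensation step to be the hard part. Finding the right pairing across different core sizes, while keeping both the BG-rank and the integer crank fixed, and checking that the finitely many local inequalities hold (with small cases verified directly), is where the real work lies. Once it is done, multiplying back by the remaining $1/D(q)^{9}$ gives Theorem~\ref{thm:main} for all $n$ and $j$.
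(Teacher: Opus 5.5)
Your first two observations are correct: the integer crank depends only on the five-core, and positivity of $D(q)^M\Psi/[5]_z$ for any single $M\le 9$ would suffice. But the proposal defers exactly the step that constitutes the proof, and the two concrete ingredients you do specify would not work as stated.

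\emph{The lattice description is wrong.} One has $\BG(\lambda)=\beta(c)+\sum_r\sigma_r(c)\BG(\mu_r)$, with $\sigma_r(c)=(-1)^{r+c_r}$. The core part $\beta(c)\in\{-1,0,1\}$ is a parity function, not a linear form. Each quotient partition contributes $\PP_{\sigma_r}=S(u^{\sigma_r},q)/D(q)^2$, which is a theta series in $u$ over two Euler factors. You count the ten Euler factors correctly but drop these theta numerators. The correct numerator is $\Omega=D^{10}\Psi=\sum_{a\in\A}q^{Q(a)}z^{L(a)}\Theta_a(u,q)$, where $\Theta_a$ is a product of five such theta series; equivalently it is a sum over ten-cores, a rank-$9$ lattice, not rank $4$.

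\emph{Your map $\sigma$ cannot exist.} If $\sigma^5=\mathrm{id}$ and $c(\sigma\vec n)=c(\vec n)+1$, then $c(\vec n)=c(\vec n)+5$. The integer lift is precisely the difficulty. The paper's rotation $\rho$ preserves $Q$ and $\Theta_a$ exactly but shifts $L$ by $1-5a_4$. This gives $(1-z^5)\Xi=(1-z)\Omega$ with signed geometric factors $g_{a_4}(z)$. The involution $T(a)=(a_0,a_1+a_4,a_2+a_4,a_3,-a_4)$ satisfies $Q(Tb)=Q(b)+b_4$ and $L(\rho Tb)=L(\rho b)+5b_4$. It pairs the terms into $q^{Q(b)}z^{L(\rho b)}[b_4]_{z^5}\bigl(\Theta_b-q^{b_4}\Theta_{Tb}\bigr)$. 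Without an explicit identity of this kind, you have no concrete signed expression whose positivity could be checked.

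\emph{You have chosen the hardest version.} By targeting a single retained Euler factor, you land in the case where your plan of termwise local inequalities actually fails. For Theorem~\ref{thm:main}, two retained factors suffice, and then each paired term is nonnegative on its own.
\begin{itemize}
\item For $b_4=2t$ the paired term is $\Theta_{Tb}(1-q^{2t})/D^2$.
\item For $b_4=2t+1$ only $\sigma_1$ and $\sigma_3$ flip, leaving the kernel $\mathcal F_{2,t}$. Its coefficients follow from the telescopes $\vartheta_1-\vartheta_0=\sum_{m\ge1}x^{2m(m-1)}(1-x^{2m})^2$ and $\vartheta_0-x\vartheta_1=\sum_{m\ge0}x^{2m^2}(1-x^{2m+1})^2$. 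Each square $(1-x^d)^2$ is absorbed by the two Euler factors of size $d$ in $D^2$.
\end{itemize}
With only one factor this breaks. For $b=(1,-1,-1,1,1)$ the paired term has $[u^{-1}q^2]\,(\Theta^{(0)}-q\Theta^{(2)})/D=-1$. Every term with $b_4=1$ and parity pattern $11111$ or $10011$ needs compensation from other lattice vectors. The paper does this with three separate local inequalities and a disjoint allocation of positive contributions (Sections~\ref{sec:nine-factors}--\ref{sec:local-L2}). Comparing with $1/(q^2;q^2)_\infty$ term by term cannot supply this, so your plan omits the key identity and also aims at the one regime where the missing work is hardest.
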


Laurent polynomials allow negative integer powers.
Here $\Nzero[z,z^{-1}]$ denotes those with nonnegative integer
coefficients, including zero when the partition class is empty. For example, $H_{0,0}=1$ and
$H_{1,1}=z^{-2}+2+z^2$; see Example~\ref{ex:cores-only}.

For independent formal variables $u,z,q$, define
\begin{equation}\label{eq:intro-Psi}
 \Psi(u,z,q)=\sum_{n\ge0}q^{n+1}\sum_{j\in\Z}u^jK_{n,j}(z),
 \qquad D(q)=\prod_{d\ge1}(1-q^{2d}).
\end{equation}
Each $q$ coefficient is a finite Laurent polynomial in $u,z$.
The core--quotient theorem gives $\Psi=D^{-10}\Omega$, where
$\Omega$ enumerates ten-core partitions with the same BG-rank
and integer crank weights (see Proposition~\ref{prop:ten-core}).
Each cell of a ten-quotient partition adds two to the rescaled
$q$-degree, explaining the ten copies of $D^{-1}$.
Our next result determines how many copies
can be removed.

\begin{theorem}[Sharp Euler-factor positivity]\label{thm:nine-factors}
For every nonnegative integer $M$,
\begin{equation}\label{eq:sharp-nine-intro}
 D(q)^M\frac{\Psi(u,z,q)}{[5]_z}
 \in\Nzero[u,u^{-1},z,z^{-1}][[q]]
 \quad\Longleftrightarrow\quad M\le9.
\end{equation}
In particular, the series
\begin{equation}\label{eq:G9-def}
 \GG_9(u,z,q)=D(q)^9\frac{\Psi(u,z,q)}{[5]_z}
             =\frac{\Omega(u,z,q)}{[5]_zD(q)},
 \qquad \Omega=D(q)^{10}\Psi,
\end{equation}
has nonnegative integer coefficients.
\end{theorem}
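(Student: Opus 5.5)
The plan splits into the two directions of the equivalence, together with a monotonicity reduction. Since $D(q)^{-1}=\prod_{d\ge1}(1-q^{2d})^{-1}$ has nonnegative coefficients, positivity of $D^M\Psi/[5]_z$ implies positivity of $D^{M'}\Psi/[5]_z$ for every $M'\le M$: multiply by $D^{-(M-M')}$. It therefore suffices to prove the case $M=9$, namely positivity of $\GG_9$, and to exhibit one negative coefficient at $M=10$. The case $M=0$ is Theorem~\ref{thm:main}. The case $M=9$ implies it, so Theorem~\ref{thm:main} will fall out as a corollary.

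For $M=10$, Proposition~\ref{prop:ten-core} gives $D^{10}\Psi/[5]_z=\Omega/[5]_z$. This is the generating function of ten-cores, weighted by $u^{\BG}z^{\cc}$ and divided by $[5]_z$. I will write the ten-cores in lattice coordinates: a vector $\mathbf n\in\Z^{10}$ with $\sum n_i=0$, whose size is a quadratic form. In these coordinates $\BG$ and $\cc$ become linear, or at worst quadratic, functions of $\mathbf n$; these are the coordinates computed in Section~\ref{sec:coordinates}. I will then expand $\Omega/[5]_z$ at small $q$-degree and exhibit the fiber (fixed $q$-power, $u$-power, and $z$-power) where the coefficient equals $-4$. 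This is a finite computation with the explicit formula and poses no difficulty.

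The main work is $\GG_9=\Omega/([5]_zD)$. I will organize the ten-core lattice sum by the BG-rank and by the crank residue. Using the Berkovich--Garvan equidistribution structure, I will group lattice points into orbits of a $\Z/5$-action (cyclic shifts in the five-core part of the coordinates) that change $\cc$ by prescribed integers. Summing over each orbit should produce a factor $[5]_z$ times a residual polynomial, except that the integer lifts differ by multiples of five, which creates negative terms. These negative terms will be compensated by lattice terms of different core sizes that carry the same $u$-exponent and the same $z$-exponent. This is the ``compensation between lattice terms of different core sizes'' described in the abstract.

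Concretely, the remaining factor $D^{-1}$ supplies terms $q^{2d}$ that shift the core size. I will write $\Omega/[5]_z$ as $P-N$ with $P,N\ge0$, express $N$ through theta series, say $N=\sum_k q^{a_k}\theta_k$, and prove $P/D\ge N/D$ coefficientwise by an injection. Each negative monomial is matched with a positive monomial of $P$ at lower $q$-degree, and the gap is filled by a partition counted in $D^{-1}$. The three local coefficient inequalities are the comparisons of theta coefficients needed for this matching, for instance that $\sum_m q^{m^2}$-type series dominate shifted copies after multiplication by $1/(q^2;q^2)_\infty$. Theta identities such as Jacobi triple product and quintuple product will rewrite the orbit sums in closed form.

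The main obstacle is the allocation step. A single positive coefficient must not be used to cancel two negative terms, so I must set up an explicit bookkeeping scheme, partitioning the negative terms into classes with disjoint source sets. I would first try to treat each BG-rank $j$ separately, reducing to a single quadratic form in the remaining coordinates, and then verify the inequalities by completing the square and comparing the counts of representations.
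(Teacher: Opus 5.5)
Your reduction is right and agrees with the paper. Since $D^{-1}\ge0$, it suffices to prove $\GG_9\ge0$ and to exhibit one negative coefficient of $\Omega/[5]_z$. The latter is a finite computation: the paper gets $[q^7u^0z^0]\Omega/[5]_z=(12-12)+(0-6)+1+1=-4$ from the vectors $(0,0,0,0,1)$, $(1,-1,-1,1,1)$, $(0,-2,0,1,2)$, $(1,0,-2,0,2)$.

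The gap is the $M=9$ direction, which your proposal restates rather than proves. Rotation orbits alone give signed quotients; the $Q=2$ orbit gives $[5]_z(z^{-2}-1+z^2)$. You never specify $P$, $N$, or which lattice terms compensate which.

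The first missing ingredient is a pairing of whole theta products. The identity $(1-z^5)\Xi=(1-z)\Omega$ from Section~\ref{sec:pairing} gives $\Omega/[5]_z=\sum_a q^{Q(a)}\Theta_a z^{L(\rho a)}g_{a_4}(z)$. The involution $T(a)=(a_0,a_1+a_4,a_2+a_4,a_3,-a_4)$ satisfies $Q(Ta)=Q(a)+a_4$ and $L(\rho Ta)=L(\rho a)+5a_4$. It therefore matches each negative term with a positive term of smaller core size carrying identical $z$-monomials, producing the kernels $(\Theta_b-q^{b_4}\Theta_{Tb})/D$. The theta telescopes and cancellation of distinct Euler factors make every such kernel nonnegative, except on $\mathcal E$ ($b_4=1$, parity $11111$ or $10011$).

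On $\mathcal E$ the kernels are genuinely negative: for $b=(1,-1,-1,1,1)$ one has $[u^{-1}q^2]\mathsf A=-1$. So no slot-by-slot matching with one Euler factor can work. Positive mass must be borrowed from \emph{other} lattice vectors with the same $u$- and $z$-exponents, and no positive term may be borrowed twice.

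The paper does this as follows:
\begin{itemize}
\item translations $w_0=(0,-2,0,1,1)$, $w_1=(1,0,-2,0,1)$, $v_0=(1,-1,-1,1,0)$, chosen to preserve the slot's $z$-monomial;
\item a priority rule, and a table showing the six target classes are disjoint;
\item three specific inequalities $\mathsf B+q^3\mathsf A\ge0$, $\mathsf C+q^3\mathsf A'\ge0$, $\mathsf A'+2q^6\mathsf E\ge0$, extended to shifts $q^{3+10h}$ by Lemma~\ref{lem:compensation-differences}.
\end{itemize}

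These inequalities are not comparisons of one-variable theta series like the one you suggest. They concern five-fold theta products in $u$ with a single Euler factor. Their proofs need the quartic slices of Lemma~\ref{lem:quartic-slices}, the BG-index reduction to $j\in\{-1,0,1\}$, an Euler-tail absorption lemma, and rational certificates. They also need label allocations in which two negative sources share one target coefficient, for example via $[x^4]g_-\ge3$.

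Your last paragraph names this allocation as the main obstacle but gives no candidate assignment or inequality. As it stands, positivity for $M\le9$ is not established.
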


Thus the positive quotient survives with just one of the ten
partition factors. Removing the last factor fails, since (see Proposition~\ref{prop:global-obstruction})
\begin{equation}\label{eq:intro-obstruction}
 [q^7u^0z^0]\,\frac{\Omega(u,z,q)}{[5]_z}=-4.
\end{equation}
This also excludes
every $M\ge10$: positivity for such an $M$ would imply positivity
for $M=10$ after multiplication by $D^{-(M-10)}$.

The first stage of the proof produces a more explicit expansion
when two partition factors are retained.

\begin{theorem}[Eight-factor expansion]\label{thm:strong}
The series
\begin{equation}\label{eq:strong}
 \GG(u,z,q):=D(q)^8\frac{\Psi(u,z,q)}{[5]_z}
       \in\Nzero[u,u^{-1},z,z^{-1}][[q]]
\end{equation}
has the nonnegative lattice expansion in
Theorem~\ref{thm:expansion}.
\end{theorem}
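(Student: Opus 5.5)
We plan to prove Theorem~\ref{thm:strong} by writing $\Psi$ as an explicit lattice theta series, dividing by $[5]_z$ termwise after regrouping, and showing each regrouped piece is a nonnegative series once multiplied by the remaining $D(q)^{-2}$. The starting point is Proposition~\ref{prop:ten-core}, $\Psi=D^{-10}\Omega$. Here $\Omega$ enumerates ten-cores. A ten-core is parametrized by a vector $\mathbf n\in\Z^{10}$ with $\sum n_i=0$ (the Garvan--Kim--Stanton coordinates).

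In these coordinates the size, the BG-rank and each residue count $R_r$ are quadratic or linear forms. So
\[
 \Omega=\sum_{\mathbf n}q^{Q(\mathbf n)}u^{B(\mathbf n)}z^{C(\mathbf n)}
\]
with $Q$ a positive definite quadratic form and $B,C$ linear forms. The BG-rank depends only on the ``odd/even'' pairing of runners; the crank \eqref{eq:crank-def} is a linear combination of residue counts mod $5$. This is what Section~\ref{sec:coordinates} verifies.

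Then $\GG=\Omega/([5]_zD^2)$, and the task is to show this has nonnegative coefficients. We first use the $\Z/5$ symmetry: the action cyclically shifting runners within each residue class mod $5$ acts on $C$ by adding $1$ modulo $5$, while preserving $Q$ and $B$. Summing over an orbit, or over a suitable coset of a sublattice, produces groups whose $z$-exponents form complete residue systems. However, these exponents are not consecutive integers, so the orbit sum is not literally $[5]_z$ times a monomial. This is the source of possible negativity, as in \eqref{eq:divisibility-example}.

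The plan is therefore to split the lattice into a ``good'' part and a ``defect'' part. On the good part, the orbit sum equals $z^{c}[5]_z$ times a monomial. Each defect term has the form $z^{c}(z^{5a}-1)\cdots$ and is handled by \emph{compensation between lattice terms of different core sizes}. We pair each defect term at $q$-degree $Q(\mathbf n)$ with good terms at higher $q$-degree having the same $B$ and the same integer crank. Multiplying by $D^{-2}=\sum p_2$-type coefficients (two colors of partitions into even parts) then supplies enough multiplicity to absorb the negative terms.

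Concretely, we intend to express the needed absorption through theta identities (Jacobi triple product and quintuple product) for the one-dimensional sums along the defect direction. The defect then reduces to checking local coefficient inequalities of the form
\[
 [q^m]\,\frac{\theta(q)}{D(q)^2}\ \ge\ [q^{m'}]\,\frac{\theta'(q)}{D(q)^2}
\]
for a small number of explicit theta-quotients.

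The main obstacle is the bookkeeping: ensuring that every negative contribution is assigned to a distinct positive contribution with the same $(u,z)$-weight, so nothing is used twice. We would first try a greedy allocation ordered by $Q$. We would prove the finitely many inequalities asymptotically via the modularity/growth of $1/D^2$, and check small $q$-degrees by direct expansion.

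The resulting expansion is stated as Theorem~\ref{thm:expansion}, which exhibits $\GG$ as a sum of manifestly nonnegative lattice series. Theorem~\ref{thm:strong} follows from it.
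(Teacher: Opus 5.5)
\paragraph{Verdict.} Your proposal has a genuine gap. It never supplies a concrete mechanism that turns $\Omega/[5]_z$ into a sum of nonnegative pieces. The statement to be proved is the specific identity \eqref{eq:positive-lattice}, and your plan never derives it.

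\paragraph{Where the sketch fails.}
\begin{itemize}
\item Suppose you split a rotation orbit as $z^{c}[5]_z$ plus defects $z^{c}(z^{5m}-1)$. Each defect divided by $[5]_z$ equals $z^{c}(z-1)$ times a geometric sum in $z^5$. It therefore has coefficients of both signs at the same $q$-degree and $u$-weight, and you do not say what absorbs them.
\item The compensation you propose runs the wrong way. In your ten-core formulation each lattice point is a single monomial, so a defect sits exactly at $q^{Q(\mathbf n)}$. Since $D^{-2}$ only raises degrees, terms of higher $q$-degree can never reach it. Negatives must be absorbed by positives coming from the same or lower degree.
\item ``Asymptotics plus finite checks'' cannot handle infinitely many parametrized inequalities without uniformity. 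Here there is also no slack: $[u^1q^2]\mathcal F_{\ell,0}=\ell-2$ vanishes exactly at $\ell=2$ (Proposition~\ref{prop:kernel}), so an exact identity is required.
\end{itemize}

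\paragraph{Missing ingredients.} The paper's proof rests on three steps, none of which appears in your plan.
\begin{enumerate}
\item \emph{Exact division.} The rotation $\rho$ satisfies $Q(\rho a)=Q(a)$, $\Theta_{\rho a}=\Theta_a$ and $L(\rho a)=L(a)+1-5a_4$. Put $\Xi=\sum_{a}q^{Q(a)}\Theta_a z^{L(\rho a)}g_{a_4}(z)$. These relations give the exact identity $(1-z^5)\Xi=(1-z)\Omega$. Thus $\Omega/[5]_z=\Xi$, and each lattice term carries the single sign of $a_4$.
\item \emph{Pairing.} The involution $T(a)=(a_0,a_1+a_4,a_2+a_4,a_3,-a_4)$ satisfies $Q(Ta)=Q(a)+a_4$ and $L(\rho Ta)=L(\rho a)+5a_4$ on $\A$. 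It matches every negative term ($a_4=-s$) with a positive term $b=Ta$ ($b_4=s$). The two terms have the identical $z$-factor $z^{L(\rho b)}[s]_{z^5}$, and $b$ lies $s$ lower in $q$.
\item \emph{Positivity of each pair.} Each pair $(\Theta_b-q^s\Theta_{Tb})/D^2$ is nonnegative on its own. The charge change is $(2s,s,0,-s,-2s)$.
 \begin{itemize}
 \item When $s$ is even, all quotient signs stay fixed, so one simply cancels the Euler factor $1-q^s$.
 \item When $s=2t+1$, only the equal signs $\sigma_1,\sigma_3$ flip. What remains is three nonnegative theta factors times $\mathcal F_{2,t}$.
 \item $\mathcal F_{2,t}$ is nonnegative because $\vartheta_1-\vartheta_0$ and $\vartheta_0-x\vartheta_1$ telescope into sums of squares $(1-x^d)^2$. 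Each square is cancelled by the two copies of the Euler factor at size $d$ in $D^{-2}$.
 \end{itemize}
\end{enumerate}

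\paragraph{Choice of coordinates.} This is why the paper works in the five-core coordinates $a\in\A$, where $\Theta_a$ factors into five one-variable thetas. In the nine-dimensional ten-core lattice that product structure is invisible. At this level no allocation across different pairs is needed; the cross-term allocation you describe is what the paper uses only for the sharper one-factor theorem.
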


Theorem~\ref{thm:nine-factors} implies Theorem~\ref{thm:main}
and the positivity assertion of Theorem~\ref{thm:strong}.
The additional content of Theorem~\ref{thm:strong} is its explicit
nonnegative lattice expansion, which already proves
Theorem~\ref{thm:main} after multiplication by $D^{-8}$.

\subsection{Proof strategy}

The proof first expresses $\Omega$ as a weighted lattice sum
using the classical core--quotient formulas. The coordinates keep
the exact integer crank exponent and the signs with which the
five quotient partitions contribute to BG-rank.
A rotation identity establishes division by $[5]_z$ within the
Laurent-polynomial coefficient ring. An involution then pairs
lattice terms of different core sizes
(see Section~\ref{sec:pairing}). Two theta telescoping identities make
each paired term nonnegative after division by $D^2$, giving the
eight-factor expansion.

With only one copy of $D^{-1}$, individual paired terms need not
be nonnegative. Section~\ref{sec:one-factor} classifies the
remaining exceptions: they have last lattice coordinate one and
one of two parity patterns (see Corollary~\ref{cor:exceptional-pairs}).
The ninth-factor proof compensates these terms using contributions
from other lattice vectors. Three local inequalities in
\eqref{eq:three-local-compensations} supply the required
coefficientwise comparisons in the BG variable.
The translations in Section~\ref{sec:nine-factors} preserve the
full crank exponent, including the chosen monomial of the finite
geometric factor in the paired expansion. Their six target classes
are disjoint, so no positive contribution is used twice.
This disjointness, together with the quadratic coordinate bound,
also makes the regrouping locally finite.

The local inequalities require simultaneous control of all negative
contributions at a given weight. Quartic theta identities locate
them; coefficient bounds and explicit injections provide the total
capacity needed, including weights at which several negative
sources share a target. The finite rational identities in
Appendix~\ref{app:l1-certificates} support these infinite
coefficient arguments. The resulting coefficient interpretation
does not provide a natural five-cycle on Young diagrams.

\subsection{Consequences and related work}\label{subsec:attribution}

Bringmann, Gomez, Rolen, and Tripp~\cite{BGRT} proved
Stanton-type positivity for ordinary and modified Andrews--Garvan
crank polynomials using symmetry, unimodality and related coefficient
inequalities, together with cyclotomic divisibility. Here unimodality
means weak increase followed by
weak decrease in the coefficient sequence. They explicitly note
in~\cite[Section~1.3]{BGRT} that their methods do not establish
the five-core crank conjecture discussed there because that crank
appears not to be unimodal. Our argument uses compensation across
lattice terms. Moreover, the centered numerator of a specific slice
in the ninth-factor theorem is not unimodal
(see Remark~\ref{rem:nine-unimodality}), so their
Lemma~3.1 does not apply directly to that slice.
Bringmann, Man, and Rolen~\cite{BMR} proved the rank unimodality
statement used for modified-rank positivity; their statistic is
Dyson's rank. Folsom, Males, and Rolen~\cite[Theorem~3.9]{FMR}
derive cyclotomic divisibility for crank polynomials of core
partitions from known equidistribution. The assertion proved here
is positivity for the specified integer five-core crank on all
partitions of $5n+4$, with BG-rank fixed.

The core--quotient and two-core generating functions used here
are classical~\cite{GKS,BG06,BG08,CJW}. The symmetries in
Section~\ref{subsec:affine-symmetries} specialize Fayers'
action~\cite{Fayers14,Fayers23} and reduce the distributions to
BG-ranks $-1,0,1$. Generating functions for cores with prescribed
BG-rank and their weighted or bounded variants appear
in~\cite{BG08,BY,HJ,BD25,Cho25}; related counting formulas occur
in~\cite{CLSXY24,LSSY26}. For strict partitions, whose parts are
distinct, Remark~\ref{rem:strict-partition-kernel} specializes
\cite[Theorem~3.1]{BU16}. In the present problem the quotient
partitions contribute to BG-rank: five-cores of size nine alone
have a quotient with a negative coefficient
(see Example~\ref{ex:cores-only}).

The local proofs use the classical quartic theta identity
of~\cite[Lemma~2.2, (2.33)]{BGY19}; the product identity
\eqref{eq:l1-pair-absorption} is the $L=1$ case of
\cite[Proposition~4.3]{BG08eta}. The positive factorization yields
cumulative crank inequalities, a nine-color convolution, and
mixed finite-difference inequalities in
Section~\ref{sec:consequences}. These are consequences of the
same positivity theorem.

\subsection{Organization and notation}

Section~\ref{sec:common-theta} proves the common theta identities,
and the next three sections establish the local inequalities used
in Section~\ref{sec:nine-factors}.
Section~\ref{sec:consequences} collects the resulting inequalities
and convolution formulas. Appendices~\ref{app:l1-certificates}
and~\ref{app:frobenius} give the finite rational certificates and
a Frobenius-coordinate expression for the joint series, respectively.

All variables are formal and all series inequalities are
coefficientwise. For a series $F$, $[q^nu^jz^k]F$ denotes the
coefficient of $q^nu^jz^k$; variables not displayed are left
unextracted. We write $F=G+O(q^d)$ when $F-G$ has no term of
$q$-degree less than $d$. For a one-variable series $F(x)$, put
$F_n=[x^n]F(x)$, with $F_n=0$ for $n<0$.
We work in $\Z[u,u^{-1},z,z^{-1}][[q]]$ and its analogues with
fewer Laurent variables, so each $q$ coefficient has finite Laurent
support. A family of summands is \emph{locally finite} if only
finitely many summands can contribute through any prescribed
$q$ degree. We verify this before regrouping lattice and theta sums.

We use the $q$-Pochhammer notation
\[
 (a;x)_m=\prod_{r=0}^{m-1}(1-ax^r),\qquad
 (a;x)_\infty=\prod_{r\ge0}(1-ax^r),
 \qquad m\in\Nzero,
\]
with empty product $1$, and
$(a_1,\ldots,a_s;x)_\infty=\prod_{i=1}^s(a_i;x)_\infty$.
In particular, $D(q)=(q^2;q^2)_\infty$.
Euler products with constant coefficient one are units in the
formal-power-series ring. The polynomial $[5]_z$ is not a unit
in $\Z[z,z^{-1}]$; division by it is justified by the explicit
factorization in Section~\ref{sec:pairing}.

\section{Core coordinates and the BG-weighted series}\label{sec:coordinates}

We use the core--quotient conventions of~\cite{GKS,BG08,Fayers14}
and the two-core formula of~\cite{CJW}, fixing the integer crank
and BG signs below.

\subsection{Charges and quotient signs}

For a partition $\lambda$, extended by zero parts, its \emph{beta-set} is
\[
 \mathcal B_\lambda=\{\lambda_i-i:i\ge1\}.
\]
For any set $B$ differing finitely from $\Z_{<0}$, define
\[
 \operatorname{ch}(B)=\#(B\cap\Nzero)-\#(\Z_{<0}\setminus B).
\]
If $B=\{b_1>b_2>\cdots\}$, then $b_i+i=\operatorname{ch}(B)$
eventually; hence charge-zero sets correspond to partitions by
$\lambda_i=b_i+i$.

Fix an integer $h\ge1$. Runner $r$ consists of positions $r+hk$
($0\le r<h$), with level $k$. Set
\[
 B_r=\{k:r+hk\in\mathcal B_\lambda\},\qquad
 c_r=\operatorname{ch}(B_r).
\]
Then $\sum_r c_r=0$, and the occupied levels have the unique form
\begin{equation}\label{eq:levels}
 k_{r,i}=c_r+\mu_{r,i}-i\qquad(i\ge1),
\end{equation}
where $\mu_r$ is a partition. Conversely, any such charges and
partitions determine $\lambda$. The tuple $(\mu_0,\ldots,\mu_{h-1})$
is its \emph{$h$-quotient}; setting every $\mu_r=\varnothing$
gives its \emph{$h$-core} $\kappa(c)$, with occupied levels $k<c_r$.

Indeed, the hook at $(i,k)$ has length
$\lambda_i-k+\lambda'_k-i+1$, where
$\lambda'_k=\#\{i:\lambda_i\ge k\}$: the difference between the
occupied position $\lambda_i-i$ and the unoccupied position
$k-1-\lambda'_k$. Thus a hook length divisible by $h$ corresponds to an occupied
position and a smaller gap on one runner. The cores just defined
are exactly the partitions with no such hook.

The size formulas are
\begin{equation}\label{eq:core-size}
 |\kappa(c)|=\frac h2\sum_{r=0}^{h-1}c_r^2+
                         \sum_{r=0}^{h-1}r c_r,
 \qquad
 |\lambda|=|\kappa(c)|+h\sum_{r=0}^{h-1}|\mu_r|.
\end{equation}
Relative to the empty partition, runner $r$ contributes
$r c_r+h c_r(c_r-1)/2$ to the core size. Summing uses
$\sum_r c_r=0$, and \eqref{eq:levels} adds $h|\mu_r|$.
For $c_r<0$, the same calculation uses the directed-sum convention
$\sum_{k=0}^{c_r-1}=-\sum_{k=c_r}^{-1}$.

Summing checkerboard signs row by row gives
\begin{equation}\label{eq:beta-bg}
 \BG(\lambda)=\frac12\sum_{i\ge1}
       \left((-1)^{\lambda_i-i}-(-1)^{-i}\right).
\end{equation}
For $h=5$, define
\begin{equation}\label{eq:sigma-beta}
 \sigma_r(c)=(-1)^{r+c_r},\qquad
 \beta(c)=\frac{1-\sum_{r=0}^{4}\sigma_r(c)}4.
\end{equation}
The five signs have product $1$, so there are $0$, $2$, or $4$
negative signs and $\beta(c)\in\{-1,0,1\}$. The BG decomposition is
\begin{equation}\label{eq:bg-decomposition}
 \BG(\lambda)=\beta(c)+\sum_{r=0}^{4}\sigma_r(c)\BG(\mu_r).
\end{equation}
To verify the signs, \eqref{eq:beta-bg} gives the core contribution
$\frac12\sum_r(-1)^r\sum_{k=0}^{c_r-1}(-1)^k=\beta(c)$,
and the displacement on runner $r$ contributes
\[
 \frac12\sum_{i\ge1}
 \left((-1)^{r+5(c_r+\mu_{r,i}-i)}-(-1)^{r+5(c_r-i)}\right)
 =\sigma_r(c)\BG(\mu_r).
\]
This is \cite[(3.2)--(3.4)]{BG08} in our convention.

The ordinary BG-rank series \cite[(7.4)]{BG06}, equivalently
the two-core formula \cite[Theorem 1]{CJW}, is
\begin{equation}\label{eq:P-def}
 \PP_+(u,q)=\sum_\nu u^{\BG(\nu)}q^{|\nu|}
       =\frac{S(u,q)}{D(q)^2},\qquad
 \PP_-(u,q)=\PP_+(u^{-1},q),
\end{equation}
including the empty partition, where
\begin{equation}\label{eq:S-D-def}
 S(u,q)=\sum_{j\in\Z}u^j q^{j(2j-1)},\qquad
 D(q)=(q^2;q^2)_\infty.
\end{equation}
Indeed, a two-core has charges $(j,-j)$, size $j(2j-1)$,
and BG-rank $j$ by \eqref{eq:core-size} and \eqref{eq:beta-bg}.
Its two quotient partitions make even bead displacements, preserving
BG-rank and contributing $D(q)^{-2}$. If $p^{(2)}(m)$ counts
ordered pairs of partitions of total size $m$, with $p^{(2)}(m)=0$
for $m\notin\Nzero$, this also gives
\begin{equation}\label{eq:bg-pairs}
 p_j(N)=p^{(2)}\!\left(\frac{N-j(2j-1)}2\right).
\end{equation}

\subsection{Affine coordinates and the integer crank}

Let
\begin{equation}\label{eq:A-QL}
 \A=\left\{a=(a_0,\ldots,a_4)\in\Z^5:\sum_{i=0}^4a_i=1\right\},
 \quad Q(a)=\sum_{i=0}^4(a_i^2-a_i a_{i+1}),
 \quad L(a)=\sum_{i=0}^4 i a_i,
\end{equation}
with subscripts modulo $5$. The map of~\cite[(4.10)--(4.14)]{BG08}
\begin{equation}\label{eq:charge-map}
 \begin{aligned}
 c_0&=a_0+a_4,& c_1&=-a_0+a_1+a_4,& c_2&=-a_1+a_2,\\
 c_3&=-a_2+a_3-a_4,& c_4&=-a_3-a_4
 \end{aligned}
\end{equation}
is a bijection from $\A$ to the charges of five-cores of size
$4$ modulo $5$, with inverse
\begin{equation}\label{eq:inverse}
 \begin{aligned}
 a_4&=(4c_0+3c_1+2c_2+c_3-1)/5,& a_0&=c_0-a_4,\\
 a_1&=c_0+c_1-2a_4,& a_2&=a_1+c_2,& a_3&=-c_4-a_4.
 \end{aligned}
\end{equation}
Indeed, $\sum_r c_r=0$ implies that $\sum_r c_r^2$ is even and
$4c_0+3c_1+2c_2+c_3=-\sum_r r c_r$. Thus \eqref{eq:core-size}
makes \eqref{eq:inverse} integral exactly in the stated size class;
its entries sum to $1$ and recover $c$ under \eqref{eq:charge-map}.
Direct expansion gives
\[
 \frac52\sum_{r=0}^4c_r(a)^2+\sum_{r=0}^4rc_r(a)-(5Q(a)-1)
       =(5a_4-1)\left(\sum_{i=0}^4a_i-1\right),
\]
and hence
\begin{equation}\label{eq:size-Q}
 |\kappa(c(a))|=5Q(a)-1.
\end{equation}

For the cell counts in \eqref{eq:crank-def}, the runner charges satisfy
\begin{equation}\label{eq:residue-charges}
 c_r=R_r(\lambda)-R_{r+1}(\lambda)\qquad(r\bmod5).
\end{equation}
Both sides vanish on $\varnothing$. Adding a cell of content $m$
moves a bead from $m-1$ to $m$, increasing charge on runner
$m\bmod5$ and decreasing it on runner $m-1\bmod5$, even across
level zero. The cell-count differences change identically,
proving \eqref{eq:residue-charges} by induction.
Thus $\cc(\lambda)=1+c_1+c_2-2c_4$ for every partition.
For $|\lambda|\equiv4\pmod5$, substitution of its core coordinate gives
the exact integer identity
\begin{equation}\label{eq:integer-crank}
 \cc(\lambda)=1+c_1+c_2-2c_4
 =1-a_0+a_2+2a_3+3a_4=L(a).
\end{equation}

Abbreviate $\sigma_r(a)=\sigma_r(c(a))$ and
$\beta(a)=\beta(c(a))$, set $\PP_{+1}=\PP_+$, $\PP_{-1}=\PP_-$,
and define
\begin{equation}\label{eq:Phi-Psi}
 \Phi_a(u,q)=u^{\beta(a)}\prod_{r=0}^4\PP_{\sigma_r(a)}(u,q),
 \qquad
 \widetilde\Psi(u,z,q)=\sum_{a\in\A}q^{Q(a)}z^{L(a)}\Phi_a(u,q).
\end{equation}
Equations~\eqref{eq:core-size}, \eqref{eq:bg-decomposition},
\eqref{eq:size-Q}, and \eqref{eq:integer-crank} give
\begin{equation}\label{eq:Psi-enumerates}
 \widetilde\Psi(u,z,q)=\sum_{n\ge0}q^{n+1}\sum_{j\in\Z}u^j K_{n,j}(z)
                       =\Psi(u,z,q).
\end{equation}
We henceforth write $\Psi$; its $q$-degree is $Q(a)$ plus the
total five-quotient size, namely $(|\lambda|+1)/5$.

For local finiteness, observe that
\begin{equation}\label{eq:Q-finite}
 Q(a)=\frac12\sum_{i=0}^4(a_i-a_{i+1})^2\ge1:
\end{equation}
zero would force equal integer coordinates summing to $1$.
Put $d_i=a_i-a_{i+1}$. Cyclic summation gives
\[
 5a_i-1=2d_i+d_{i+1}-d_{i+3}-2d_{i+4},
\]
so Cauchy--Schwarz and $\sum_i d_i^2=2Q(a)$ imply
\begin{equation}\label{eq:coordinate-bound}
 |a_i-1/5|\le 2\sqrt{Q(a)/5}.
\end{equation}
Bounded $Q$ therefore allows finitely many coordinates; at each such
coordinate, fixed degree allows finitely many quotient partitions.
Every $q$-coefficient of $\Psi$ consequently has finite Laurent support.

\subsection{Rotation invariance of the BG weight}

The rotation used in~\cite[(4.17)--(4.18)]{BG08} is
\begin{equation}\label{eq:rotation}
 \rho(a_0,a_1,a_2,a_3,a_4)=(a_4,a_0,a_1,a_2,a_3).
\end{equation}
The definitions of $Q,L$ and $\sum_i a_i=1$ give
\begin{equation}\label{eq:rotation-QL}
 Q(\rho a)=Q(a),\qquad L(\rho a)=L(a)+1-5a_4.
\end{equation}
Put $e_i(a)=(-1)^{a_i+a_{i+1}}$. Reducing
\eqref{eq:charge-map} modulo $2$ yields
\begin{equation}\label{eq:edge-signs}
 (\sigma_0(a),\ldots,\sigma_4(a))=(e_4(a),e_2(a),e_1(a),e_0(a),e_3(a)).
\end{equation}
Rotation permutes these edge signs, preserving $\beta$ and the
product in \eqref{eq:Phi-Psi}; hence
\begin{equation}\label{eq:rotation-Phi}
 \Phi_{\rho a}(u,q)=\Phi_a(u,q).
\end{equation}
If $k(a)$ is the number of negative edge signs, then
\begin{equation}\label{eq:three-weights}
 \Phi_a=u^{k(a)/2-1}\PP_+^{5-k(a)}\PP_-^{k(a)},
 \qquad k(a)\in\{0,2,4\},
\end{equation}
and
\begin{equation}\label{eq:Phi-numerator}
 \Phi_a(u,q)=
 \frac{u^{k(a)/2-1}S(u,q)^{5-k(a)}S(u^{-1},q)^{k(a)}}{D(q)^{10}}.
\end{equation}
Define
\begin{equation}\label{eq:Theta-Omega}
 \Theta_a(u,q)=D(q)^{10}\Phi_a(u,q),\qquad
 \Omega(u,z,q)=\sum_{a\in\A}q^{Q(a)}z^{L(a)}\Theta_a(u,q).
\end{equation}
Thus $\Omega=D^{10}\Psi$, $\Theta_{\rho a}=\Theta_a$, and
both $\Theta_a$ and $\Omega$ have nonnegative coefficients.
The series in Theorem~\ref{thm:strong} is
\begin{equation}\label{eq:G-Omega}
 \GG=\frac{\Omega}{[5]_zD(q)^2}.
\end{equation}
Divisibility by $[5]_z$ is proved in Section~\ref{sec:pairing}.

\subsection{The ten-core numerator}

Let $\mathcal C_{10}$ denote the ten-core partitions.
The following case $(r,s)=(2,5)$ of~\cite[Lemma~2.5(1)]{Fayers14}
includes the BG and integer crank weights.

\begin{proposition}[A specialization of the core--quotient theorem]\label{prop:ten-core}
A partition with five-quotient $(\mu_0,\ldots,\mu_4)$ is a ten-core
if and only if every $\mu_r$ is a two-core. Consequently,
\begin{equation}\label{eq:ten-core-numerator}
 \Omega(u,z,q)=
 \sum_{\substack{\kappa\in\mathcal C_{10}\\|\kappa|\equiv4\pmod5}}
 q^{(|\kappa|+1)/5}u^{\BG(\kappa)}z^{\cc(\kappa)}.
\end{equation}
Passage to the ten-core preserves both BG-rank and the integer
five-core crank.
\end{proposition}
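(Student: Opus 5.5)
The plan is to use the abacus description with $h=10$ and refine it to $h=5$. Taking five runners and then splitting each of them into two, by the parity of the level, gives ten runners. Precisely, the position $r+5k$ with $k=2m+\epsilon$ equals $(r+5\epsilon)+10m$. So runner $r+5\epsilon$ of the ten-abacus is the set of even or odd levels (according to $\epsilon$) of runner $r$ of the five-abacus. Apply the hook criterion proved above to $h=10$: $\lambda$ is a ten-core exactly when no occupied position has a gap ten below it, i.e.\ when every ten-runner is ``flush''. Translated to the five-abacus, this says that on each runner $r$ the bead set $B_r$ has no occupied level $k$ with $k-2$ unoccupied. The same criterion with $h=2$, applied to the beta-set $B_r$ of $\mu_r$ (shifted by the charge $c_r$, which does not affect the criterion), says exactly that $\mu_r$ is a two-core. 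This gives the first assertion; it is Fayers' Lemma~2.5(1) in the case $(r,s)=(2,5)$, and I would check that the level shift by $c_r$ in \eqref{eq:levels} does not interfere.

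For the generating function I would start from the definition \eqref{eq:Theta-Omega}, $\Omega=\sum_a q^{Q(a)}z^{L(a)}\Theta_a$ with $\Theta_a=D^{10}\Phi_a$. By \eqref{eq:Phi-numerator},
\[
\Theta_a=u^{\beta(a)}\prod_{r}S(u^{\sigma_r(a)},q),
\]
and $S(u^{\pm1},q)=\sum_{j}u^{\pm j}q^{j(2j-1)}$ enumerates the two-cores $\nu$ by $u^{\pm\BG(\nu)}q^{|\nu|}$, as recorded after \eqref{eq:S-D-def}. Expanding the product therefore gives a sum over all $a\in\A$ and all five-tuples of two-cores $(\mu_0,\dots,\mu_4)$ of
\[
q^{Q(a)+\sum|\mu_r|}\,z^{L(a)}\,u^{\beta(a)+\sum_r\sigma_r(a)\BG(\mu_r)}.
\]
Via the bijection \eqref{eq:charge-map} and the core--quotient bijection, these data correspond exactly to partitions $\lambda$ with $|\lambda|\equiv4\pmod5$ whose five-quotient consists of two-cores, that is, by the first part, to ten-cores $\kappa$ of that size class.

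The exponents then match term by term. By \eqref{eq:core-size} and \eqref{eq:size-Q}, $|\kappa|=5Q(a)-1+5\sum|\mu_r|$, so the $q$-exponent is $(|\kappa|+1)/5$. By \eqref{eq:bg-decomposition}, the $u$-exponent is $\BG(\kappa)$. By \eqref{eq:integer-crank}, the $z$-exponent $L(a)$ equals $\cc(\kappa)$, because the crank depends only on the five-core charges. This proves \eqref{eq:ten-core-numerator}.

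It remains to see that passage to the ten-core preserves both statistics. For a general $\lambda$ with five-quotient $(\mu_r)$, removing ten-hooks replaces each $\mu_r$ by its two-core. This keeps the five-core charges $c$ fixed, so $\cc$ is unchanged by \eqref{eq:integer-crank}. It also keeps each $\BG(\mu_r)$ fixed, since removing a domino preserves the BG-rank (the two-quotient moves are even displacements, as noted for \eqref{eq:bg-pairs}); hence $\BG$ is unchanged by \eqref{eq:bg-decomposition}. As a consistency check, this matches the factorization $\Psi=D^{-10}\Omega$: the ten two-quotient factors contribute $D^{-10}$, with $q$-degree $2$ per cell.

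The main obstacle is the first step: confirming that the ten-runner parity splitting interacts correctly with the level indexing $k_{r,i}=c_r+\mu_{r,i}-i$. An odd charge $c_r$ swaps which parity class plays the role of ``first'' runner, and one must check that this only relabels the two-quotient of $\mu_r$ and does not change its two-core status. I would handle this by noting that the two-core property of a beta-set is invariant under translation of all levels by any integer.
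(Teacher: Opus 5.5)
Your proof is correct. For the ten-core criterion and for \eqref{eq:ten-core-numerator} it follows the paper's argument. A bead--gap pair at distance divisible by ten lies on one five-runner at even level distance, and the level shift by $c_r$ does not affect the two-core criterion. Restricting each $\mu_r$ to a two-core then replaces $\PP_{\sigma_r}$ by $S(u^{\sigma_r},q)$.

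The real difference is in the final step, where you show that passage to the ten-core preserves the two statistics.

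\emph{The paper's route.} It passes to the ten-runner charges $d_0,\ldots,d_9$ and notes that $c_r=d_r+d_{r+5}$. Parity is constant on each ten-runner, so \eqref{eq:beta-bg} gives $\BG(\lambda)=\sum_{r\text{ even}}d_r$. The ten-core has the same ten-runner charges as $\lambda$. Hence both the crank and the BG-rank are visibly functions of the ten-core alone, and the ten-core's five-quotient never needs to be identified.

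\emph{Your route.} You stay on the five-abacus. You identify the ten-core as the partition with the same five-charges and with quotient consisting of the two-cores of the $\mu_r$. You then combine \eqref{eq:bg-decomposition} with the domino invariance of BG-rank. This requires one extra, easy observation: pushing beads flush on each ten-runner is the same as taking the two-core of each $\mu_r$. In exchange, you reuse only \eqref{eq:bg-decomposition} and the two-core facts already recorded, rather than introducing a second system of charges.
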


\begin{proof}
A bead--gap pair at distance divisible by ten lies on one five-runner
at even level distance. Translating by its charge gives the
corresponding two-core criterion for the quotient.
Restricting each quotient to a two-core therefore replaces
$\PP_\varepsilon$ by $S(u^\varepsilon,q)$ in \eqref{eq:Phi-Psi},
proving \eqref{eq:ten-core-numerator}.

If $d_0,\ldots,d_9$ are the ten-runner charges, the five-runner charges are
\begin{equation}\label{eq:ten-five-charges}
 c_r=d_r+d_{r+5}\qquad(0\le r<5).
\end{equation}
These are unchanged on passage to the ten-core, as is the integer
crank $1+c_1+c_2-2c_4$. Parity is constant on a ten-runner, so
\eqref{eq:beta-bg} and $\sum_r d_r=0$ give
\begin{equation}\label{eq:ten-bg}
 \BG(\lambda)=\frac12\sum_{r=0}^{9}(-1)^r d_r
             =\sum_{\substack{0\le r<10\\r\text{ even}}}d_r.
\end{equation}
Thus BG-rank is also preserved. Each ten-quotient cell adds ten
cells, or two to the rescaled $q$-degree, giving the ten-quotient
factor $D(q)^{-10}$ and $\Psi=D^{-10}\Omega$.
\end{proof}

\section{A positive theta kernel}\label{sec:kernel}

The sign changes in the lattice pairing lead to a kernel built
from two copies of $S$. For $\ell,t\in\Nzero$, define
\begin{equation}\label{eq:general-kernel}
 \mathcal F_{\ell,t}(u,q)
   =\frac{uS(u^{-1},q)^2-q^{2t+1}S(u,q)^2}{D(q)^\ell}.
\end{equation}
The ordinary two-partition difference is
\begin{equation}\label{eq:F-def}
 F_t(u,q)=u\PP_-(u,q)^2-q^{2t+1}\PP_+(u,q)^2
         =\mathcal F_{4,t}(u,q).
\end{equation}

For a formal variable $x$, put
\begin{equation}\label{eq:theta-def}
 \vartheta_0(x)=\sum_{r\in\Z}x^{2r^2},\qquad
 \vartheta_1(x)=\sum_{r\in\Z}x^{2r(r+1)}.
\end{equation}
If $\ell\ge2$, $d\ge1$, and $r\in\{1,2\}$, define
\begin{equation}\label{eq:R-def}
 \RR^{(\ell)}_{d,r}(x)
  =\frac{(1-x^d)^r}{(x;x)_\infty^\ell}
  =\frac{1}{(1-x^d)^{\ell-r}}
       \prod_{\substack{k\ge1\\k\ne d}}\frac1{(1-x^k)^\ell},
 \qquad
 U_t^{(\ell)}(x)=
 \begin{cases}0,&t=0,\\\RR^{(\ell)}_{t,1}(x),&t\ge1.\end{cases}
\end{equation}
These series have nonnegative coefficients. In a partition in
$\ell$ colors, each part has one of $\ell$ color labels; parts
of the same size and color are indistinguishable. The product
$\RR^{(\ell)}_{d,r}$ counts such partitions, with a
prescribed set of $r$ colors forbidden for parts of size $d$.

\begin{proposition}\label{prop:kernel}
For every $\ell\ge2$ and $t\in\Nzero$,
$\mathcal F_{\ell,t}(u,q)\in\Nzero[u,u^{-1}][[q]]$.
More precisely, for $J\in\Z$ and $x=q^2$,
\begin{equation}\label{eq:kernel-positive}
 [u^J]\mathcal F_{\ell,t}(u,q)=
 \begin{cases}
 q^{J^2-J+1}\displaystyle\left\{
    \sum_{m\ge1}x^{2m(m-1)}\RR^{(\ell)}_{2m,2}(x)
           +\vartheta_0(x)U_t^{(\ell)}(x)\right\},&J\text{ even},\\[6pt]
 q^{J^2-J}\displaystyle\left\{
    \sum_{m\ge0}x^{2m^2}\RR^{(\ell)}_{2m+1,2}(x)
           +x\vartheta_1(x)U_t^{(\ell)}(x)\right\},&J\text{ odd}.
 \end{cases}
\end{equation}
The threshold $\ell=2$ is sharp for positivity of this family for
all $t$: for $\ell\in\Nzero$,
\begin{equation}\label{eq:kernel-sharp}
 [u^1q^2]\mathcal F_{\ell,0}(u,q)=\ell-2.
\end{equation}
\end{proposition}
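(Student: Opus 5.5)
The plan is to compute each $u^J$ coefficient of the numerator $uS(u^{-1},q)^2-q^{2t+1}S(u,q)^2$ in closed form as a difference of two theta series in $x=q^2$. I would then write that difference as an explicitly nonnegative telescoping sum whose terms each carry a factor $(1-x^d)^2$ or $(1-x^t)$, which the denominator $D(q)^\ell=(x;x)_\infty^\ell$ absorbs.

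\emph{Step 1 (coefficient extraction).} Expand $S(u,q)^2=\sum_{j,k}u^{j+k}q^{j(2j-1)+k(2k-1)}$ and substitute $N=j+k$, $m=j-k$, where $m\equiv N\pmod 2$. Then $j(2j-1)+k(2k-1)=N^2+m^2-N$. For $[u^J]\,uS(u^{-1},q)^2$ one has $N=1-J$, and for $[u^J]S(u,q)^2$ one has $N=J$. In both cases the exponent is $J^2-J+m^2$, but the parity constraints on $m$ are opposite. This gives
\[
 [u^J]\bigl(uS(u^{-1},q)^2-q^{2t+1}S(u,q)^2\bigr)
 =q^{J^2-J}\Bigl(\sum_{m\equiv J+1}q^{m^2}-q^{2t+1}\sum_{m\equiv J}q^{m^2}\Bigr).
\]
Splitting by parity, with $m=2r$ or $m=2r+1$ and $q^{(2r+1)^2}=q\,x^{2r(r+1)}$, turns this into
\[
 q^{J^2-J+1}\bigl(\vartheta_1(x)-x^t\vartheta_0(x)\bigr)\quad\text{for $J$ even},\qquad
 q^{J^2-J}\bigl(\vartheta_0(x)-x^{t+1}\vartheta_1(x)\bigr)\quad\text{for $J$ odd}.
\]
Each $u^J$ coefficient involves a single $q$-shift, so local finiteness is immediate.

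\emph{Step 2 (telescoping identities).} I would verify the two identities
\[
 \vartheta_1-\vartheta_0=\sum_{m\ge1}x^{2m(m-1)}(1-x^{2m})^2,\qquad
 \vartheta_0-x\vartheta_1=\sum_{m\ge0}x^{2m^2}(1-x^{2m+1})^2 .
\]
For each, expand the square and reindex the three resulting sums. In the first, the outer terms give $\vartheta_1-1$ and the middle term gives $-(\vartheta_0-1)$. In the second, the terms $x^{2m^2}$ and $x^{2(m+1)^2}$ together give $\vartheta_0$, and the middle term gives $-x\vartheta_1$. Next write
\[
 \vartheta_1-x^t\vartheta_0=(\vartheta_1-\vartheta_0)+(1-x^t)\vartheta_0,\qquad
 \vartheta_0-x^{t+1}\vartheta_1=(\vartheta_0-x\vartheta_1)+x(1-x^t)\vartheta_1.
\]
The correction terms vanish when $t=0$, which matches the definition $U^{(\ell)}_0=0$. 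Dividing by $(x;x)_\infty^\ell$ and using the definition \eqref{eq:R-def} then yields exactly \eqref{eq:kernel-positive}.

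\emph{Step 3 (positivity and sharpness).} For $\ell\ge2$, each $\RR^{(\ell)}_{d,2}$ and each $U^{(\ell)}_t$ has nonnegative coefficients by its colored-partition interpretation, and $\vartheta_0,\vartheta_1$ are nonnegative. This proves $\mathcal F_{\ell,t}\in\Nzero[u,u^{-1}][[q]]$. For \eqref{eq:kernel-sharp}, take $J=1$ and $t=0$, so the prefactor is $q^0$ and $q^2=x$. Then we need the $x^1$ coefficient of $(\vartheta_0-x\vartheta_1)/(x;x)_\infty^\ell$. Since $(1-2x+O(x^2))(1+\ell x+O(x^2))$ has linear coefficient $\ell-2$, this equals $\ell-2$, which is negative for $\ell<2$.

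The main obstacle is Step 1, specifically getting the parity bookkeeping and the extra $q$-shift exactly right, so that the even-$J$ and odd-$J$ cases land on $\vartheta_1-x^t\vartheta_0$ and $\vartheta_0-x^{t+1}\vartheta_1$ respectively. Once those are in place, the telescoping in Step 2 is a routine reindexing.
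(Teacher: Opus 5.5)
Your proposal is correct and follows essentially the same route as the paper: your substitution $N=j+k$, $m=j-k$ is the paper's completion of the square, and after it both arguments use the same two theta telescopes, the same $(1-x^t)$ correction terms absorbed into $U^{(\ell)}_t$, and the same first-order expansion $(1-2x)(1+\ell x)$ for sharpness. One detail is worth stating explicitly: $J^2-J\ge0$ and it grows with $|J|$, so each prefactor is a genuine nonnegative power of $q$ and each $q$-coefficient involves only finitely many values of $J$.
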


\begin{proof}
In $[u^J]S(u,q)^2$, write the two summation indices as $h$ and $J-h$.
Completing the square gives
\[
 2h^2-h+2(J-h)^2-(J-h)=4(h-J/2)^2+J^2-J.
\]
Consequently,
\begin{equation}\label{eq:S-coefficient}
 [u^J]S(u,q)^2=q^{J^2-J}
 \begin{cases}
 \vartheta_0(x),&J\text{ even},\\
 q\vartheta_1(x),&J\text{ odd}.
 \end{cases}
\end{equation}
For $[u^J]uS(u^{-1},q)^2$, the two indices have sum $1-J$.
Replacing $J$ by $1-J$ leaves the quadratic prefactor $q^{J^2-J}$
unchanged and reverses the parity. After removing this prefactor, the numerator
in~\eqref{eq:general-kernel} therefore has coefficient
\begin{equation}\label{eq:kernel-numerator}
 \begin{cases}
 q\{\vartheta_1-\vartheta_0+(1-x^t)\vartheta_0\},&J\text{ even},\\
 \vartheta_0-x\vartheta_1+x(1-x^t)\vartheta_1,&J\text{ odd},
 \end{cases}
\end{equation}
where both theta series are evaluated at $x$.

The two differences telescope:
\begin{align}
 \vartheta_1(x)-\vartheta_0(x)
   &=\sum_{m\ge1}x^{2m(m-1)}(1-x^{2m})^2,
      \label{eq:theta-difference-1}\\
 \vartheta_0(x)-x\vartheta_1(x)
   &=\sum_{m\ge0}x^{2m^2}(1-x^{2m+1})^2.
      \label{eq:theta-difference-2}
\end{align}
For the finite partial sums, write
\[
 E_M=1+2\sum_{m=1}^M x^{2m^2},\qquad
 A_M=2\sum_{m=0}^M x^{2m(m+1)}.
\]
For $M\ge1$, the first right-hand sum through $m=M$ is
$A_{M-1}-E_M+x^{2M(M+1)}$. For $M\ge0$, the second right-hand
sum through $m=M$ is $E_M-xA_M+x^{2(M+1)^2}$.
The orders of the boundary terms tend to infinity, proving the
identities in the formal power-series ring.

Divide~\eqref{eq:kernel-numerator} by
$D(q)^\ell=(x;x)_\infty^\ell$. Each square in
\eqref{eq:theta-difference-1}--\eqref{eq:theta-difference-2} gives
$\RR^{(\ell)}_{d,2}$, while
$(1-x^t)/(x;x)_\infty^\ell=U_t^{(\ell)}$ for $t\ge1$.
For $t=0$, this numerator is zero, as required by the definition of
$U_0^{(\ell)}$. This gives~\eqref{eq:kernel-positive}.
All the displayed factors are nonnegative when $\ell\ge2$.
Also $J^2-J\ge0$ for every integer $J$, and the quadratic starting
degrees in $J$ and $m$ ensure local finiteness.

Finally, at $J=1$ and $t=0$, the numerator coefficient is
$\vartheta_0(x)-x\vartheta_1(x)=1-2x+O(x^2)$, whereas
$(x;x)_\infty^{-\ell}=1+\ell x+O(x^2)$.
Their product has coefficient $\ell-2$ at $x=q^2$.
This proves~\eqref{eq:kernel-sharp} and the sharpness assertion.
\end{proof}

\subsection{Support of the two-partition difference}

Let $B(N,J)$ count ordered pairs $(\mu,\nu)$ of partitions with
$|\mu|+|\nu|=N$ and $\BG(\mu)+\BG(\nu)=J$; set $B(N,J)=0$
for $N<0$. The case $\ell=4$ has an exact support description.

\begin{corollary}\label{cor:kernel-support}
For $N,t\in\Nzero$ and $J\in\Z$,
\begin{equation}\label{eq:bipartition-inequality}
 B(N,1-J)-B(N-2t-1,J)\ge0.
\end{equation}
Put
\[
 \eta(J)=J^2-J+\mathbf1_{\{J\text{ even}\}}.
\]
The inequality is strict exactly when
$N\ge\eta(J)$ and $N\equiv\eta(J)\pmod2$.
Equivalently, these are the exact $(q,u)$-support conditions for
$F_t=\mathcal F_{4,t}$, independently of $t$.
\end{corollary}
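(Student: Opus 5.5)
The plan is to identify the left side of \eqref{eq:bipartition-inequality} as a coefficient of $F_t=\mathcal F_{4,t}$, and then read positivity and support off the explicit formula \eqref{eq:kernel-positive} with $\ell=4$.

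\textbf{Step 1: the coefficient identification.} By \eqref{eq:P-def}, $\PP_+(u,q)^2=\sum_{N,J}B(N,J)u^Jq^N$. Since $\PP_-(u,q)=\PP_+(u^{-1},q)$, we get $[u^Jq^N]\,u\PP_-^2=B(N,1-J)$. Multiplying by $q^{2t+1}$ shifts the $q$-degree, so $[u^Jq^N]\,q^{2t+1}\PP_+^2=B(N-2t-1,J)$, with the convention $B=0$ for negative first argument. Hence
\[
 [u^Jq^N]F_t(u,q)=B(N,1-J)-B(N-2t-1,J),
\]
and nonnegativity is exactly Proposition~\ref{prop:kernel} with $\ell=4$.

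\textbf{Step 2: positivity.} With $x=q^2$, \eqref{eq:kernel-positive} writes $[u^J]F_t$ as $q^{\eta(J)}$ times a power series in $x$ with nonnegative coefficients. Here $\eta(J)=J^2-J+1$ for even $J$ and $J^2-J$ for odd $J$. So the coefficient vanishes unless $N\ge\eta(J)$ and $N\equiv\eta(J)\pmod 2$, which gives the ``only if'' direction of the support claim.

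\textbf{Step 3: strictness on the support.} For the converse, I will show that the bracketed $x$-series has every coefficient positive, independently of $t$. Since $U_t^{(4)}\ge0$, it suffices to exhibit one summand with full support. For $J$ odd, take the $m=0$ term $\RR^{(4)}_{1,2}(x)=(1-x)^{-2}\prod_{k\ge2}(1-x^k)^{-4}$. Its factor $(1-x)^{-2}$ already has all coefficients positive, and multiplying by a series with constant term $1$ and nonnegative coefficients keeps every coefficient at least $1$. For $J$ even, take the $m=1$ term $\RR^{(4)}_{2,2}(x)$, which contains $(1-x)^{-4}$ and is therefore positive in every degree by the same reasoning. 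Combining this with the prefactor $q^{\eta(J)}$ and $x=q^2$ gives strict positivity at every $N\ge\eta(J)$ with $N\equiv\eta(J)\pmod 2$.

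\textbf{Where care is needed.} The main point is that the exact support is independent of $t$, and this is only as subtle as checking that the $t$-dependent term never cancels anything. Here that is automatic: both terms in each bracket of \eqref{eq:kernel-positive} are nonnegative, and the first term alone already has full support. So the only real check is the positivity of $\RR^{(4)}_{d,2}$ in every degree for $d=1,2$, which holds because $\ell-2=2\ge1$ leaves a factor $(1-x)^{-1}$ or better.
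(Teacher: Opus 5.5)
Your proof is correct and follows the paper's argument essentially step for step. You identify the left side as $[q^Nu^J]F_t$ and read nonnegativity and the necessary size and parity conditions from \eqref{eq:kernel-positive}. You then get strictness from the first summand $\RR^{(4)}_{2,2}$ (even $J$) or $\RR^{(4)}_{1,2}$ (odd $J$), which keeps a factor $(1-x)^{-1}$ and so is positive in every degree; the paper states this last point as ``both products still allow parts of size one.''
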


\begin{proof}
The left side of~\eqref{eq:bipartition-inequality} is
$[q^Nu^J]F_t$. Formula~\eqref{eq:kernel-positive} proves its
nonnegativity and the necessary size and parity conditions.
The first summand is $q^{\eta(J)}\RR^{(4)}_{2,2}(q^2)$ for even
$J$ and $q^{\eta(J)}\RR^{(4)}_{1,2}(q^2)$ for odd $J$.
Both products still allow parts of size one, so they have a
positive coefficient at every nonnegative power of $q^2$.
This proves strict positivity at every permitted degree.
\end{proof}

\begin{remark}[Distinct-part interpretation]\label{rem:strict-partition-kernel}
Let $B_{\mathrm{str}}(N,J)$ count ordered pairs of strict partitions,
that is, partitions with distinct parts, with total size $N$ and total
BG-rank $J$. Set it to zero for $N<0$. The strict-partition BG series is
\begin{equation}\label{eq:strict-bg-known}
 \sum_{\lambda\text{ strict}}u^{\BG(\lambda)}q^{|\lambda|}
 =\frac{S(u,q)}{D(q)}.
\end{equation}
This is the coefficientwise limit of the bounded formula in
\cite[Theorem~3.1]{BU16}; see also~\cite[Theorem~5.3]{LSSY26}.
At each fixed degree only finitely many $J$ occur, so the limit
commutes with summation over $J$. Proposition~\ref{prop:kernel} with $\ell=2$
therefore gives
\[
 B_{\mathrm{str}}(N,1-J)-B_{\mathrm{str}}(N-2t-1,J)\ge0
 \qquad(N,t\in\Nzero,\ J\in\Z).
\]
\end{remark}

\section{A positive lattice factorization}\label{sec:pairing}

Define the involution
\begin{equation}\label{eq:T}
 T(a_0,a_1,a_2,a_3,a_4)
   =(a_0,a_1+a_4,a_2+a_4,a_3,-a_4)
\end{equation}
on $\A$. It exchanges positive and negative last coordinates and
fixes those with last coordinate zero. For $s\ge1$, write
$[s]_{z^5}=\sum_{v=0}^{s-1}z^{5v}$.

\begin{theorem}\label{thm:expansion}
For $b\in\A$ with $s=b_4>0$, put
\begin{equation}\label{eq:paired-kernel}
 \mathcal B_b(u,q)
   =\frac{\Theta_b(u,q)-q^s\Theta_{Tb}(u,q)}{D(q)^2}.
\end{equation}
Then $\mathcal B_b\in\Nzero[u,u^{-1}][[q]]$, and
\begin{equation}\label{eq:positive-lattice}
 \GG(u,z,q)=\frac{\Omega(u,z,q)}{[5]_zD(q)^2}
 =\sum_{\substack{b\in\A\\b_4>0}}
     q^{Q(b)}z^{L(\rho b)}[b_4]_{z^5}\mathcal B_b(u,q).
\end{equation}
The sum is locally finite. In particular, it is a nonnegative
Laurent-series expansion of the series in
Theorem~\ref{thm:strong}.
\end{theorem}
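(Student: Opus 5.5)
The plan has three parts: establish the identity \eqref{eq:positive-lattice} by an exact telescoping computation, then show each $\mathcal B_b$ is nonnegative by reducing it to the theta differences already handled in Proposition~\ref{prop:kernel}, and finally check local finiteness.

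\emph{Step 1: how $T$ and $\rho$ act on $Q$ and $L$.} For $b\in\A$ with $s=b_4>0$, direct computation from \eqref{eq:A-QL} gives
\[
 L(Tb)=L(b)-5s,\qquad L(\rho b)=L(b)+1-5s,\qquad L(\rho Tb)=L(b)+1.
\]
For $Q$, write $Q=\tfrac12\sum_i(a_i-a_{i+1})^2$ and use $a_0+a_1+a_2+a_3=1-s$. The change in the sum of squares is $2s^2+2s(1-s)=2s$, so
\[
 Q(Tb)=Q(b)+s .
\]

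\emph{Step 2: the identity.} I will prove \eqref{eq:positive-lattice} multiplied by $[5]_z(1-z)D^2$. Since $[5]_z[s]_{z^5}=[5s]_z=(1-z^{5s})/(1-z)$, the right-hand side becomes
\[
 \sum_{b_4>0}q^{Q(b)}\bigl(z^{L(b)+1-5s}-z^{L(b)+1}\bigr)\bigl(\Theta_b-q^s\Theta_{Tb}\bigr).
\]
Expanding, and using Step 1 together with $\Theta_{\rho a}=\Theta_a$ and $Q(\rho a)=Q(a)$, the four families of terms are as follows.
\begin{itemize}
\item The terms $-z^{L(b)+1}q^{Q(b)}\Theta_b$ give the part of $-z\,\Omega$ coming from $a_4>0$.
\item The terms $-q^{Q(b)+s}z^{L(b)+1-5s}\Theta_{Tb}$ give the part of $-z\,\Omega$ coming from $a=Tb$, that is, from $a_4<0$.
\item The two remaining families are $z^{L(\rho a)}q^{Q(\rho a)}\Theta_{\rho a}$, with $a=b$ and with $a=Tb$. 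Together they cover exactly the $a$ with $a_4\ne0$.
\end{itemize}
Since $\rho$ is a bijection of $\A$, we have $\Omega=\sum_a z^{L(\rho a)}q^{Q(\rho a)}\Theta_{\rho a}$. The missing terms with $a_4=0$ satisfy $L(\rho a)=L(a)+1$, so they are exactly the $a_4=0$ part of $z\,\Omega$. Hence the total equals $(1-z)\Omega$.

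All regroupings are locally finite, by \eqref{eq:coordinate-bound} and because $Q(Tb)\ge Q(b)$. Since $\Z[u^{\pm1},z^{\pm1}][[q]]$ has no zero divisors, cancelling $(1-z)$ gives \eqref{eq:positive-lattice}. In particular this proves divisibility by $[5]_z$.

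\emph{Step 3: nonnegativity of $\mathcal B_b$.} Compare the edge signs $e_i$ of $b$ and $Tb$.

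If $s$ is even, no edge sign changes, so $\Theta_{Tb}=\Theta_b$ and
\[
 \mathcal B_b=\Theta_b\,\frac{1-q^s}{D^2}.
\]
This is nonnegative because $1-q^{2t}$ cancels a factor of $D^{-2}$, leaving a product with nonnegative coefficients.

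If $s$ is odd, only $e_0$ and $e_2$ flip, since they depend on $a_0+a_1+s$ and $a_2+a_3+s$. Because $a_0+a_1+a_2+a_3=1-s$ is even, $e_0=e_2$, so $k$ changes by exactly $\pm2$. There are two cases.
\begin{itemize}
\item If $e_0=e_2=-1$ in $b$, then \eqref{eq:Phi-numerator} gives
\[
 \Theta_b-q^s\Theta_{Tb}=\bigl(\text{monomial in }u\bigr)\cdot\bigl(\text{product of three }S(u^{\pm1},q)\bigr)\cdot\bigl(uS(u^{-1},q)^2-q^{s}S(u,q)^2\bigr).
\]
Dividing by $D^2$ gives the product of a nonnegative series with $\mathcal F_{2,t}$, where $s=2t+1$. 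This is nonnegative by Proposition~\ref{prop:kernel}.
\item If $e_0=e_2=+1$, the bracket is reversed and becomes $S(u,q)^2-q^s\,uS(u^{-1},q)^2$. Here I will repeat the coefficient computation \eqref{eq:S-coefficient}, with $x=q^2$. For even $J$ the coefficient is $q^{J^2-J}\bigl(\vartheta_0-x^{t+1}\vartheta_1\bigr)$, and for odd $J$ it is $q^{J^2-J+1}\bigl(\vartheta_1-x^{t}\vartheta_0\bigr)$. These rewrite as $(\vartheta_0-x\vartheta_1)+x(1-x^t)\vartheta_1$ and $(\vartheta_1-\vartheta_0)+(1-x^t)\vartheta_0$, respectively. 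Both become nonnegative after division by $(x;x)_\infty^2$, via \eqref{eq:theta-difference-1}--\eqref{eq:theta-difference-2} and the series $U_t^{(2)}$. Multiplying by the remaining factor with nonnegative coefficients finishes this case.
\end{itemize}

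I expect the main obstacle to be this sign bookkeeping in Step 3. One has to confirm that the parity constraint from $\sum a_i=1$ forces $e_0=e_2$, and that the reversed kernel in the second case also telescopes positively with only two copies of $D^{-1}$. The identity in Step 2, by contrast, is mechanical once the formulas $Q(Tb)=Q(b)+s$ and $L(\rho Tb)=L(b)+1$ are in hand.
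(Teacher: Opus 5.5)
Your proposal is correct and follows essentially the paper's argument. It uses the same telescoping identity, built from rotation invariance together with $Q(Tb)=Q(b)+s$ and $L(\rho Tb)=L(b)+1$; you verify it after multiplying through by $(1-z)[5]_z$ rather than via the paper's auxiliary series $\Xi$. You also make the same reduction of odd $s$ to $\mathcal F_{2,t}$, except that the paper handles your second orientation by noting $S(u,q)^2-q^s uS(u^{-1},q)^2=u\,D(q)^2\mathcal F_{2,t}(u^{-1},q)$ instead of redoing the theta-coefficient computation.
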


\begin{proof}
We first prove the divisibility identity before dividing by the
Euler product. For an integer $v$, let
\[
 g_v(z)=
 \begin{cases}
 \sum_{i=0}^{v-1}z^{5i},&v>0,\\
 0,&v=0,\\
 -\sum_{i=v}^{-1}z^{5i},&v<0.
 \end{cases}
\]
This is a finite Laurent polynomial, with
$(1-z^5)g_v(z)=1-z^{5v}$. Form the locally finite series
\begin{equation}\label{eq:Xi}
 \Xi(u,z,q)=\sum_{a\in\A}q^{Q(a)}\Theta_a(u,q)
                      z^{L(\rho a)}g_{a_4}(z).
\end{equation}
After multiplication by $1-z^5$, the first part of the sum reindexes
to $\Omega$, by rotation invariance of $Q$ and $\Theta_a$.
The second part is $z\Omega$, because
$L(\rho a)+5a_4=L(a)+1$. Hence
\[
 (1-z^5)\Xi=(1-z)\Omega.
\]
The coefficient ring is an integral domain, so cancellation of
$1-z$ gives
\begin{equation}\label{eq:signed-divisibility}
 [5]_z\Xi=\Omega.
\end{equation}
Thus $\Omega/[5]_z=\Xi$ belongs to the formal-series ring.

The map $T$ preserves the coordinate sum and satisfies $T^2=1$.
Expanding the quadratic and linear forms for an arbitrary vector
gives
\[
 Q(Ta)-Q(a)=a_4\sum_{i=0}^4a_i,\qquad
 L(\rho Ta)-L(\rho a)=5a_4.
\]
Consequently, on $\A$,
\begin{equation}\label{eq:pair-degrees}
 Q(Ta)=Q(a)+a_4,\qquad L(\rho Ta)=L(\rho a)+5a_4.
\end{equation}
The terms with $a_4=0$ vanish. If $b_4=s>0$ and $a=Tb$, then
$a_4=-s$, $Q(a)=Q(b)+s$, and
$L(\rho a)=L(\rho b)+5s$. Since
$g_{-s}=-z^{-5s}[s]_{z^5}$, the sum of the $a$ and $b$ terms in
\eqref{eq:Xi} is
\[
 q^{Q(b)}z^{L(\rho b)}[s]_{z^5}
                   \bigl(\Theta_b-q^s\Theta_a\bigr).
\]
Every nonzero term is paired exactly once. Dividing
\eqref{eq:signed-divisibility} by the unit $D(q)^2$ proves
\eqref{eq:positive-lattice} as an identity.

It remains to prove the nonnegativity of $\mathcal B_b$.
Keep $a=Tb$ and $s=b_4>0$. Substitution into the charge map gives
\begin{equation}\label{eq:charge-change}
 c(b)-c(a)=(2s,s,0,-s,-2s).
\end{equation}
Suppose first that $s=2t$ is even. Then $t\ge1$, all five signs
and $\beta$ are unchanged, and $\Theta_b=\Theta_a$. Therefore
\begin{equation}\label{eq:even-positive}
 \mathcal B_b=\Theta_a\frac{1-q^{2t}}{D(q)^2}
   =\Theta_a\frac{1}{1-q^{2t}}
        \prod_{\substack{m\ge1\\m\ne t}}\frac1{(1-q^{2m})^2}.
\end{equation}
Every factor on the right is nonnegative.

Now suppose that $s=2t+1$. Exactly $\sigma_1$ and $\sigma_3$
change sign. Their values at $a$ agree: indeed,
\[
 c_1(a)+c_3(a)=-a_0+a_1-a_2+a_3
            \equiv1-a_4\equiv0\pmod2,
\]
and the fixed runner indices $1$ and $3$ have the same parity.
Write $\sigma_1(a)=\sigma_3(a)=\delta\in\{1,-1\}$.
Flipping these two signs changes their sum by $-4\delta$ and
therefore gives $\beta(b)=\beta(a)+\delta$.
The other three signs remain fixed. With
$S_\varepsilon(u,q)=S(u^\varepsilon,q)$ for
$\varepsilon\in\{1,-1\}$, it follows that
\begin{align}
 \mathcal B_b
 &=u^{\beta(a)}\prod_{r\in\{0,2,4\}}S_{\sigma_r(a)}(u,q)
       \frac{u^\delta S_{-\delta}(u,q)^2
                          -q^{2t+1}S_\delta(u,q)^2}{D(q)^2}
       \notag\\
 &=u^{\beta(a)}\prod_{r\in\{0,2,4\}}S_{\sigma_r(a)}(u,q)
          \mathcal F_{2,t}(u^\delta,q).
       \label{eq:odd-factor}
\end{align}
The second line uses $u\mapsto u^{-1}$ when $\delta=-1$.
Proposition~\ref{prop:kernel} makes the last factor nonnegative,
and all remaining factors are nonnegative Laurent series.
This proves the required positivity in both cases.

Finally, a contribution to a fixed degree $q^N$ must satisfy
$Q(b)\le N$. There are only finitely many such $b$ by
\eqref{eq:coordinate-bound}. For each of them, the kernel and theta
products have finite Laurent support in every degree, and
$[b_4]_{z^5}$ is a finite polynomial. This proves the asserted
local finiteness after pairing and after division by $D(q)^2$.
\end{proof}

\begin{proof}[Proof of Theorems~\ref{thm:main} and~\ref{thm:strong}]
Theorem~\ref{thm:expansion} and $\Omega=D^{10}\Psi$ give
\[
 D^8\frac{\Psi}{[5]_z}=\frac{\Omega}{[5]_zD^2}=\GG
       \in\Nzero[u,u^{-1},z,z^{-1}][[q]],
\]
which proves Theorem~\ref{thm:strong}. Multiplication by the
nonnegative series $D^{-8}$ gives
\begin{equation}\label{eq:H-generating}
 \frac{\Psi(u,z,q)}{[5]_z}
   =\frac{\GG(u,z,q)}{D(q)^8}
   =\sum_{n\ge0}q^{n+1}\sum_{j\in\Z}u^jH_{n,j}(z).
\end{equation}
Taking $[q^{n+1}u^j]$ proves Theorem~\ref{thm:main}.
All coefficient extractions are finite by the preceding theorem.
\end{proof}

\section{The one-factor boundary}\label{sec:one-factor}

\subsection{The local threshold}\label{subsec:one-factor-kernels}

We now determine when a single Euler denominator suffices for the
kernel $\mathcal F_{\ell,t}$ defined in \eqref{eq:general-kernel}.

\begin{proposition}\label{prop:kernel-exact-threshold}
For $\ell,t\in\Nzero$, one has
\[
 \mathcal F_{\ell,t}(u,q)\in\Nzero[u,u^{-1}][[q]]
 \quad\Longleftrightarrow\quad
 \ell\ge2\ \text{or}\ (\ell=1\text{ and }t\ge1).
\]
The negative coefficients of $\mathcal F_{1,0}$ are precisely
\begin{equation}\label{eq:one-factor-negative-locus}
 [u^Jq^{J^2-J+2}]\mathcal F_{1,0}=-1
 \qquad(J\text{ odd}).
\end{equation}
Every coefficient not listed in
\eqref{eq:one-factor-negative-locus} is nonnegative.
\end{proposition}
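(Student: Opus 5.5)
The plan is to reuse the coefficient formula from the proof of Proposition~\ref{prop:kernel}, now dividing by only $D(q)^\ell=(x;x)_\infty^\ell$ with $x=q^2$ and $\ell\in\{0,1\}$. After removing the prefactor $q^{J^2-J}$, the numerator coefficient of $u^J$ in \eqref{eq:general-kernel} is $q\{\vartheta_1-x^t\vartheta_0\}$ for even $J$ and $\vartheta_0-x^{t+1}\vartheta_1$ for odd $J$. So everything reduces to one-variable coefficient statements about
\[
 E_{\ell,t}=\frac{\vartheta_1-x^t\vartheta_0}{(x;x)_\infty^\ell},\qquad
 O_{\ell,t}=\frac{\vartheta_0-x^{t+1}\vartheta_1}{(x;x)_\infty^\ell}.
\]
The case $\ell\ge2$ is Proposition~\ref{prop:kernel}.

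For the failures I would argue as follows. When $\ell\in\{0,1\}$ and $t=0$, \eqref{eq:kernel-sharp} gives the negative coefficient $\ell-2$. When $\ell=0$ and $t\ge1$, I would look at $O_{0,t}$, which is $\vartheta_0-x^{t+1}\vartheta_1$ itself. Its coefficient at $x^{t+1}$ is $-2+[x^{t+1}]\vartheta_0$, and its coefficient at $x^{t+5}$ is $-2+[x^{t+5}]\vartheta_0$. These cannot both be cancelled, because $t+1$ and $t+5$ cannot both have the form $2m^2$ (that would force $n^2-m^2=2$).

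For positivity when $\ell=1$ and $t\ge1$, the first step is a monotonicity reduction. Since $(1-x)/(x;x)_\infty$ has nonnegative coefficients, so does $x^t(1-x)\vartheta_0/(x;x)_\infty$, and hence
\[
 E_{1,t+1}-E_{1,t}=x^t(1-x)\frac{\vartheta_0}{(x;x)_\infty}\ge0 .
\]
The same argument applies to $O_{1,t}$. It therefore suffices to treat $t=1$.

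For $O_{1,1}$ I would write
\[
 \vartheta_0-x^2\vartheta_1=(\vartheta_0-x\vartheta_1)+x(1-x)\vartheta_1
\]
and use the telescoping identity \eqref{eq:theta-difference-2}. The $m=0$ term $(1-x)^2$ combines with the lowest term $2x(1-x)$ of $x(1-x)\vartheta_1$ to give $(1-x)(1+x)=1-x^2$. This term is then absorbed by $1/(x;x)_\infty$. For each $m\ge1$, the term $x^{2m^2}(1-x^{2m+1})^2$ leaves one surplus factor $(1-x^{2m+1})$. I would absorb this surplus into the remaining positive pieces $2x^{1+2k(k+1)}(1-x)$ of $x(1-x)\vartheta_1$, or into the positive series $R^{(1)}_{2m+1,1}$, by an explicit injection of coloured partitions.

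The even case $E_{1,1}$ is handled analogously via \eqref{eq:theta-difference-1}. Here $\vartheta_1-x\vartheta_0=(\vartheta_1-\vartheta_0)+(1-x)\vartheta_0$, and the term $(1-x)\vartheta_0$ plays the compensating role.

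The exact negative locus for $\ell=1$, $t=0$ needs two facts. First, $(\vartheta_1-\vartheta_0)/(x;x)_\infty$ must be nonnegative, so that even $J$ contributes nothing negative. Second, $(\vartheta_0-x\vartheta_1)/(x;x)_\infty$ must equal $1-x$ plus a nonnegative series with no further negative terms, which gives exactly $-1$ at $x^1$, i.e.\ at $q^{J^2-J+2}$. This is the same absorption problem, but without the helper terms coming from $t\ge1$.

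I expect the main obstacle to be the absorption of the surplus factors $(1-x^{2m})$ and $(1-x^{2m+1})$ for $m\ge1$ when only a single Euler factor is available. A naive monomial matching of negative against positive terms fails, because the theta exponents grow like $2m^2+2m$ against $2m^2+t$. The first thing I would try is the product forms
\[
 \vartheta_0=(x^4;x^4)_\infty(-x^2;x^4)_\infty^2,\qquad
 \vartheta_1=2(x^4;x^4)_\infty(-x^4;x^4)_\infty^2 ,
\]
which turn each quotient into a ratio of Euler products. I would then aim for a direct injection between the corresponding overpartition-type objects, checking small degrees by the finite rational certificates of Appendix~\ref{app:l1-certificates}.
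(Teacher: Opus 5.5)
Your reduction to $E_{\ell,t}$ and $O_{\ell,t}$ is correct. So are the use of \eqref{eq:kernel-sharp} at $t=0$, the monotonicity reduction to $t=1$, and the regrouping $(1-x)^2+2x(1-x)=1-x^2$. Your exclusion of $\ell=0$ via the coefficients of $O_{0,t}$ at $x^{t+1}$ and $x^{t+5}$ is also correct, and simpler than the paper's argument, which works at $J=0$ and factors $2t+1$.

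The gap is the step you yourself flag as the main obstacle. You never show that the telescoping squares $x^{2m^2}(1-x^{2m+1})^2$ and $x^{2m(m-1)}(1-x^{2m})^2$, $m\ge1$, become nonnegative after division by a single Euler product. None of your proposed routes is carried out: matching against helper terms, Jacobi product forms, overpartition injections, rational certificates. As written, the proposal proves neither positivity for $\ell=1,\ t\ge1$ nor the exact negative locus of $\mathcal F_{1,0}$.

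The missing idea is elementary. The product $(x;x)_\infty$ already contains distinct factors at sizes $1$ and $d$, so for every $d\ge2$
\[
 \frac{(1-x^d)^2}{(x;x)_\infty}
 =\frac{1+x+\cdots+x^{d-1}}{\prod_{k\ge2,\,k\ne d}(1-x^k)}\in\Nzero[[x]].
\]
Each such square is nonnegative on its own. No compensation between terms is needed, and no surplus factor is left to absorb. Your aside about absorbing into $\mathcal R^{(1)}_{d,1}$ points this way: adjoining $d$ parts equal to one is the required injection. But you do not supply it, and you then treat the step as hard.

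The only exceptional square is $d=1$, the $m=0$ term for odd $J$, and it is not the same absorption problem: at $t=0$ there is no helper term at all. You must show that $(1-x)^2/(x;x)_\infty=(1-x)\prod_{k\ge2}(1-x^k)^{-1}$ has coefficient $-1$ at $x$ and no other negative coefficient. The paper writes its coefficients as $g(n)-g(n-1)$, where $g$ counts partitions with no part equal to one. It then uses the injection that increases the largest part by one, valid for $n\ge3$.

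The odd squares with $m\ge1$ start at degree $x^{2m^2}\ge x^2$, and all even-$J$ squares are nonnegative. Together with the $d=1$ analysis, this yields \eqref{eq:one-factor-negative-locus} exactly. For $t\ge1$, positivity then follows from your $1-x^2$ regrouping. Alternatively, note that $x(1-x^t)\vartheta_1/(x;x)_\infty$ is nonnegative with coefficient $2$ at $x$.
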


\begin{proof}
Write $E(x)=(x;x)_\infty$ and $x=q^2$. For every $d\ge2$,
\begin{equation}\label{eq:one-factor-large-square}
 \frac{(1-x^d)^2}{E(x)}
 =\frac{1+x+\cdots+x^{d-1}}
        {\displaystyle\prod_{\substack{k\ge1\\k\ne1,d}}(1-x^k)}
 \in\Nzero[[x]].
\end{equation}
Here the factors at the two distinct part sizes $1$ and $d$
are used separately.

The case $d=1$ has exactly one negative coefficient. To see this,
let $g(n)$ count partitions of $n$ with no part equal to $1$,
with $g(n)=0$ for $n<0$. Then
\[
 \frac{(1-x)^2}{E(x)}
  =(1-x)\prod_{k\ge2}(1-x^k)^{-1}
  =\sum_{n\ge0}\bigl(g(n)-g(n-1)\bigr)x^n.
\]
We have $g(0)=1$ and $g(1)=0$. For $n\ge3$, increasing the
largest part by one injects the partitions counted by $g(n-1)$
into those counted by $g(n)$. Its image has a unique largest
part, and decreasing that part recovers the source. For $n=2$
the source set is empty. The complements of this injection give
\begin{equation}\label{eq:exceptional-one-factor-square}
 \frac{(1-x)^2}{E(x)}
 =1-x+x^2+\sum_{d\ge2}
       \frac{x^{2d}}{\prod_{k=2}^{d}(1-x^k)}.
\end{equation}
The summation term records a partition with largest part $d$
appearing at least twice; the separate term $x^2$ records the
one-part partition $(2)$.

Apply the two telescopes
\eqref{eq:theta-difference-1}--\eqref{eq:theta-difference-2}
after division by $E(x)$. For even $J$, every square has
$d=2m\ge2$, so its contribution is nonnegative by
\eqref{eq:one-factor-large-square}. For odd $J$, the term $m=0$
is the exceptional square \eqref{eq:exceptional-one-factor-square};
all terms with $m\ge1$ have $d=2m+1\ge3$ and start at degree
$x^{2m^2}\ge x^2$. Thus at $t=0$ the sole negative coefficient
in each odd-$J$ coefficient series occurs at relative degree $x$,
and it equals $-1$. The prefactor $q^{J^2-J}$ gives
\eqref{eq:one-factor-negative-locus}.

For $t\ge1$, cancellation of the single Euler factor at size $t$
shows that
\[
 W_t(x):=\frac{1-x^t}{E(x)}
       =\prod_{\substack{k\ge1\\k\ne t}}(1-x^k)^{-1}
       \in\Nzero[[x]],\qquad W_t(0)=1.
\]
The correction in the even-$J$ case is $\vartheta_0W_t$,
which is nonnegative. In the odd-$J$ case it is
$x\vartheta_1W_t$, also nonnegative, and its coefficient at $x$
is $2$. It therefore removes the only negative coefficient just
identified. This proves positivity for $\ell=1,t\ge1$.
Proposition~\ref{prop:kernel} gives positivity for every $\ell\ge2$,
and \eqref{eq:one-factor-negative-locus} excludes $\ell=1,t=0$.

It remains to exclude $\ell=0$ for every fixed $t\ge0$.
At $J=0$, equation~\eqref{eq:kernel-numerator} gives
\[
 [u^0]\mathcal F_{0,t}=q\bigl(\vartheta_1(x)-x^t\vartheta_0(x)\bigr).
\]
For an integer $r\ge1$, the coefficient of $x^{t+2r^2}$
in $x^t\vartheta_0$ is $2$. It can receive a contribution from
$\vartheta_1$ only if, for some integer $s$,
\[
 t+2r^2=2s(s+1),\qquad
 (2s+1-2r)(2s+1+2r)=2t+1.
\]
The nonzero integer $2t+1$ has only finitely many ordered integer
factorizations. Thus only finitely many positive integers $r$
can satisfy this equation. For every sufficiently large $r$,
\[
 [u^0q^{1+2t+4r^2}]\mathcal F_{0,t}=-2.
\]
This rules out $\ell=0$ and completes the classification.
All sums introduced above are locally finite, since their starting
degrees are quadratic or at least linear in their indices.
\end{proof}

Multiplication by a mixed pair of theta factors makes the
one-factor kernel nonnegative also at $t=0$.

\begin{proposition}\label{prop:mixed-theta-kernel}
For every $t\in\Nzero$,
\begin{equation}\label{eq:mixed-theta-kernel}
 S(u,q)S(u^{-1},q)\mathcal F_{1,t}(u,q)
       \in\Nzero[u,u^{-1}][[q]].
\end{equation}
The same assertion holds with $u$ replaced by $u^{-1}$.
\end{proposition}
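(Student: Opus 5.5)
The case $t\ge1$ needs no new work. By Proposition~\ref{prop:kernel-exact-threshold}, $\mathcal F_{1,t}\in\Nzero[u,u^{-1}][[q]]$ for $t\ge1$, and $S(u,q)S(u^{-1},q)$ is a product of two series with nonnegative coefficients, so their product is nonnegative. The statement with $u$ replaced by $u^{-1}$ follows by applying the substitution $u\mapsto u^{-1}$ throughout. Hence everything reduces to $t=0$.

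For $t=0$ I would work with the numerator directly rather than treat the $-1$'s of \eqref{eq:one-factor-negative-locus} one at a time. Multiplying out gives
\[
 S(u,q)S(u^{-1},q)\,\mathcal F_{1,0}
 =\frac{uS(u^{-1},q)^3S(u,q)-q\,S(u,q)^3S(u^{-1},q)}{E(q^2)},
\]
where each numerator term is a quaternary theta series. First I would extract $[u^J]$ of each term, as in \eqref{eq:S-coefficient}. The four summation indices have signed sum fixed by $J$, so completing the square should give a common prefactor $q^{Q_J}$ times a ternary theta series in $x=q^2$. The difference of the two ternary series would then be rewritten with the quartic theta identity of \cite[Lemma~2.2, (2.33)]{BGY19}, which the paper already cites for these local arguments. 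The aim is a telescoped form
\[
 \sum_{\text{terms}} x^{e}\,(1-x^{d})^2
\]
with nonnegative multiplicities, analogous to \eqref{eq:theta-difference-1}--\eqref{eq:theta-difference-2}. After division by $E(x)$, every square with $d\ge2$ is nonnegative by \eqref{eq:one-factor-large-square}. Only the $d=1$ squares can produce negatives, and by \eqref{eq:exceptional-one-factor-square} each contributes exactly one $-x$.

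The remaining step is to absorb each such $-x$ into a positive term in the same $u^J$ coefficient. My first attempt would be this: the numerator, being a difference of quaternary thetas, should contain additional nonnegative summands (for instance ternary theta multiples with coefficient at least $2$ at relative degree $x^1$), playing the role that $x\vartheta_1W_t$ played for $t\ge1$ in the proof of Proposition~\ref{prop:kernel-exact-threshold}. Alternatively, one $d=1$ square could be paired with an adjacent $d=2$ square, using $(1-x)^2/E+(1-x^2)^2/E=2-x+2x^2+\cdots$ plus nonnegative terms.

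A cruder monomial-by-monomial approach looks unworkable. Each negative term of $\mathcal F_{1,0}$ is multiplied by every monomial of $S(u,q)S(u^{-1},q)$, and the shifts $4h+1$ in the exponents $h(2h-1)+k(2k+1)$ do not admit a uniform injection into neighbouring $u^{J\pm1}$ contributions. So the real content lies in the exact theta rewriting.

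The main obstacle is therefore the bookkeeping after applying the quartic identity. I have to verify that, for every $J$, the number of $d=1$ exceptional squares never exceeds the available coefficient at relative degree $x$ from the compensating terms. I also need the regrouped sums to remain locally finite, which should follow from the quadratic starting degrees, as in the earlier proofs. If a uniform identity is elusive, I would split by the parity of $J$, and for small $|J|$ check the finitely many low-degree coefficients against rational certificates of the kind in Appendix~\ref{app:l1-certificates}.
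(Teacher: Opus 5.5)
Your treatment of $t\ge1$ is correct: it follows from Proposition~\ref{prop:kernel-exact-threshold} together with the nonnegativity of $S(u,q)S(u^{-1},q)$. The $u^{-1}$ version is also correct, since the mixed factor is invariant under $u\mapsto u^{-1}$.

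The gap is the case $t=0$, which is the whole content of the proposition, and for it you give only a plan. The telescoped form of the quaternary slices is stated as an aim. Neither of your two absorption mechanisms for the $-x$ coming from each $d=1$ square is made concrete or checked. For the first, at $t=0$ the correction $1-x^t$ vanishes, so there is no analogue of $x\vartheta_1W_t$, and you do not say where extra positive summands would come from. For the second, nothing guarantees a $d=2$ square at the right degree with enough multiplicity. The multiplicity count you yourself call the main obstacle is left open. The certificate fallback presupposes a uniform argument for all other $J$ and degrees, which is exactly what is missing.

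The missing idea is that the mixed factor contributes the same scalar theta series to every $u$-slice. Write its two indices as $a-b=k$ and $a+b=w$, and use $a(2a-1)+b(2b-1)=k^2+w^2-w$. This gives
\[
 S(u,q)S(u^{-1},q)=\psi(q^2)\sum_{k\in\Z}u^kq^{k^2},\qquad \psi(x)=S(1,x).
\]
The two-core formula \eqref{eq:P-def} at $u=1$ gives $\psi(x)=E(x^2)^2/E(x)$, where $E(x)=(x;x)_\infty$. Hence $\psi(x)/E(x)=(x;x^2)_\infty^{-2}$, which contains $(1-x)^{-2}$. Therefore, with $x=q^2$,
\[
 S(u,q)S(u^{-1},q)\mathcal F_{1,t}(u,q)
 =\Bigl(\sum_{k\in\Z}u^kq^{k^2}\Bigr)
 \frac{uS(u^{-1},q)^2-q^{2t+1}S(u,q)^2}{(x;x^2)_\infty^2}.
\]
After the telescopes \eqref{eq:theta-difference-1}--\eqref{eq:theta-difference-2}, each square becomes
\[
 \frac{(1-x^d)^2}{(x;x^2)_\infty^2}
 =(1+x+\cdots+x^{d-1})^2\prod_{m\ge3,\ m\text{ odd}}(1-x^m)^{-2}\ge0,
\]
and this includes $d=1$. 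For $t\ge1$ the correction $(1-x^t)/(x;x^2)_\infty^2$ is nonnegative as well. This handles every $t$ at once, with no injection or counting.

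It also shows that your diagnosis of the monomial approach is off. The $-x$ from $(1-x)^2/E(x)$ is absorbed inside the same $u$-slice of $\mathcal F_{1,0}$, through the common factor $\psi(x)$. It is not compensated by contributions from the neighbouring slices $u^{J\pm1}$, nor by extra summands in the numerator.
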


\begin{proof}
Set
\[
 \mathcal T(u,q)=\sum_{k\in\Z}u^kq^{k^2},\qquad
 \psi(x)=S(1,x),\qquad O(x)=(x;x^2)_\infty.
\]
We first verify
\begin{equation}\label{eq:mixed-theta-addition}
 S(u,q)S(u^{-1},q)=\psi(q^2)\mathcal T(u,q).
\end{equation}
In the coefficient of $u^k$ on the left, let the two theta indices
be $a,b$, so $a-b=k$, and put $w=a+b$. Then $w\equiv k\pmod2$
and the $q$ exponent is
\[
 a(2a-1)+b(2b-1)=k^2+w^2-w.
\]
If $k$ is even, write $w=2r$ to obtain $w^2-w=2r(2r-1)$;
if $k$ is odd, write $w=2r+1$ to obtain
$w^2-w=2r(2r+1)=2(-r)(2(-r)-1)$.
In either case the remaining sum is $\psi(q^2)$.

The two-core formula \eqref{eq:P-def}, specialized at
$u=1$ and with its variable $q$ replaced by $x$, gives
\[
 \frac1{E(x)}=\frac{\psi(x)}{E(x^2)^2},
 \qquad E(x)=(x;x)_\infty.
\]
Hence
\begin{equation}\label{eq:psi-euler-specialization}
 \frac{\psi(x)}{E(x)}
  =\frac{E(x^2)^2}{E(x)^2}=\frac1{O(x)^2}.
\end{equation}
Combining \eqref{eq:mixed-theta-addition} and
\eqref{eq:psi-euler-specialization}, with $x=q^2$, gives
\[
 S(u,q)S(u^{-1},q)\mathcal F_{1,t}(u,q)
 =\mathcal T(u,q)\frac{uS(u^{-1},q)^2-q^{2t+1}S(u,q)^2}{O(x)^2}.
\]
The series $\mathcal T$ is nonnegative. To prove the same for the remaining
quotient, apply \eqref{eq:kernel-numerator} and the two telescopes.
For every $d\ge1$,
\[
 \frac{(1-x^d)^2}{O(x)^2}
 =(1+x+\cdots+x^{d-1})^2
       \prod_{\substack{k\ge3\\k\text{ odd}}}(1-x^k)^{-2}
 \in\Nzero[[x]].
\]
For $t\ge1$, one likewise has
\[
 \frac{1-x^t}{O(x)^2}
 =\frac{1+x+\cdots+x^{t-1}}{1-x}
       \prod_{\substack{k\ge3\\k\text{ odd}}}(1-x^k)^{-2}
 \in\Nzero[[x]],
\]
and the correction is zero at $t=0$. Each summand of the
coefficient expansion is therefore nonnegative. Quadratic starting
degrees in the theta indices ensure local finiteness, including
after multiplication by $\mathcal T$. Finally, inversion of the Laurent
variable preserves coefficientwise nonnegativity, and the mixed
factor $S(u,q)S(u^{-1},q)$ is invariant under that inversion.
\end{proof}

\begin{corollary}\label{cor:mixed-five-theta}
Write
\[
 \Theta^{(k)}=u^{k/2-1}S(u,q)^{5-k}S(u^{-1},q)^k
 \qquad(k=0,2,4).
\]
Then both
\[
 \frac{\Theta^{(4)}-q\Theta^{(2)}}{D(q)},\qquad
 \frac{\Theta^{(2)}-q\Theta^{(4)}}{D(q)}
\]
have nonnegative Laurent coefficients.
\end{corollary}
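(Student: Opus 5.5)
The plan is to factor each numerator as a product of nonnegative theta series times the mixed one-factor kernel of Proposition~\ref{prop:mixed-theta-kernel} at $t=0$. No new coefficient estimate should be needed. Throughout, write $S_\pm=S(u^{\pm1},q)$.

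For the first quotient, $\Theta^{(4)}=uS_+S_-^4$ and $\Theta^{(2)}=S_+^3S_-^2$. Extracting the common factor $S_+S_-^2$ gives
\[
 \Theta^{(4)}-q\Theta^{(2)}=S_+S_-^2\bigl(uS_-^2-qS_+^2\bigr).
\]
The bracket is $D(q)\,\mathcal F_{1,0}(u,q)$ by \eqref{eq:general-kernel} with $\ell=1$, $t=0$. Hence
\[
 \frac{\Theta^{(4)}-q\Theta^{(2)}}{D(q)}
 =S(u^{-1},q)\cdot\Bigl(S(u,q)S(u^{-1},q)\,\mathcal F_{1,0}(u,q)\Bigr).
\]
The bracketed product is nonnegative by \eqref{eq:mixed-theta-kernel} with $t=0$, and $S(u^{-1},q)$ has nonnegative coefficients. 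Their product is therefore nonnegative.

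For the second quotient, pull out $uS_+S_-^2$ instead:
\[
 \Theta^{(2)}-q\Theta^{(4)}=S_+^3S_-^2-quS_+S_-^4
 =uS_+S_-^2\bigl(u^{-1}S_+^2-qS_-^2\bigr).
\]
The bracket is exactly the numerator of $\mathcal F_{1,0}$ with $u$ replaced by $u^{-1}$, that is, $D(q)\,\mathcal F_{1,0}(u^{-1},q)$. Hence
\[
 \frac{\Theta^{(2)}-q\Theta^{(4)}}{D(q)}
 =u\,S(u^{-1},q)\cdot\Bigl(S(u,q)S(u^{-1},q)\,\mathcal F_{1,0}(u^{-1},q)\Bigr).
\]
The last factor is nonnegative by the final sentence of Proposition~\ref{prop:mixed-theta-kernel}: the mixed factor $S_+S_-$ is inversion-invariant, so the statement transfers to $u^{-1}$. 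Multiplying by the monomial $u$ and by $S(u^{-1},q)$ preserves nonnegativity.

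The only thing to check carefully is the bookkeeping of the powers of $u$. The prefactors $u^{k/2-1}$ are $u$ for $k=4$ and $u^0$ for $k=2$, and these must produce exactly the $u^{\pm1}$ in front of the kernel numerators. The expansions above confirm this. All products involved are locally finite, since every theta and kernel series has $q$-degrees growing quadratically in the Laurent exponent, as in the earlier proofs. So I expect no real obstacle; the main step is recognizing that the single leftover factor of $S(u^{-1},q)$ can be peeled off, leaving precisely the mixed pair required by Proposition~\ref{prop:mixed-theta-kernel}.
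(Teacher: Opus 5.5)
Your proposal is correct and matches the paper's proof: the paper writes the two quotients as $S_-\bigl(S_+S_-\mathcal F_{1,0}(u,q)\bigr)$ and $uS_-\bigl(S_+S_-\mathcal F_{1,0}(u^{-1},q)\bigr)$ and then applies Proposition~\ref{prop:mixed-theta-kernel}, including its inversion-invariance clause for the second quotient. Your bookkeeping of the $u^{k/2-1}$ prefactors is right.
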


\begin{proof}
With $S_+=S(u,q)$ and $S_-=S(u^{-1},q)$, the two quotients are,
respectively,
\[
 S_-\bigl(S_+S_-\mathcal F_{1,0}(u,q)\bigr),\qquad
 uS_-\bigl(S_+S_-\mathcal F_{1,0}(u^{-1},q)\bigr).
\]
Apply Proposition~\ref{prop:mixed-theta-kernel}.
\end{proof}

\subsection{The remaining exceptional lattice terms}

Only two parity classes remain in the paired expansion. Define
\[
 \mathcal E=\{b\in\A:b_4=1,\ 
                  \{k(b),k(Tb)\}=\{0,2\}\}.
\]

\begin{corollary}\label{cor:exceptional-pairs}
For every $b\in\A$ with $b_4>0$ and $b\notin\mathcal E$,
\[
 \frac{\Theta_b-q^{b_4}\Theta_{Tb}}{D(q)}
       \in\Nzero[u,u^{-1}][[q]].
\]
Consequently all summands outside $\mathcal E$ in the paired
expansion of $\Omega/([5]_zD)$ are nonnegative. A vector belongs
to $\mathcal E$ exactly when $b_4=1$ and either all five coordinates
are odd, or $b_0,b_3,b_4$ are odd while $b_1,b_2$ are even.
\end{corollary}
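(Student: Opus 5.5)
Redo the case analysis from the proof of Theorem~\ref{thm:expansion}, now dividing by $D(q)$ instead of $D(q)^2$. Fix $b\in\A$ with $s=b_4>0$, and let $a=Tb$.

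\textbf{Even $s=2t$.} Here $\Theta_b=\Theta_a$, so the quotient is
\[
\Theta_a\,(1-q^{2t})/D(q)=\Theta_a\prod_{m\ne t}(1-q^{2m})^{-1},
\]
which is nonnegative.

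\textbf{Odd $s=2t+1$.} Equation \eqref{eq:odd-factor} with $D^2$ replaced by $D$ writes the quotient as
\[
u^{\beta(a)}\prod_{r\in\{0,2,4\}}S_{\sigma_r(a)}\;\mathcal F_{1,t}(u^\delta,q).
\]
If $t\ge1$, Proposition~\ref{prop:kernel-exact-threshold} makes $\mathcal F_{1,t}$ nonnegative, and we are done.

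\textbf{The case $s=1$ ($t=0$).} This is the only case that needs more work. The three untouched signs $\sigma_0,\sigma_2,\sigma_4$ together with $\delta$ determine $k(a)$ and $k(b)$. The goal is to show that, whenever $\{k(b),k(a)\}\ne\{0,2\}$, the three factors $S_{\sigma_r(a)}$ contain a mixed pair $S(u,q)S(u^{-1},q)$. Proposition~\ref{prop:mixed-theta-kernel}, applied to $\mathcal F_{1,0}(u^{\pm1},q)$, then gives nonnegativity; the remaining $S$ factor is nonnegative.

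\textbf{Sign bookkeeping.} The product of all five signs is $1$ and $\sigma_1=\sigma_3=\delta$. Hence $\sigma_0\sigma_2\sigma_4=1$, so among $\sigma_0,\sigma_2,\sigma_4$ there are zero or two negative signs.
\begin{itemize}
\item \emph{Two negatives.} The mixed pair is present, and we are done.
\item \emph{Zero negatives.} Then $k(a)$ equals $0$ or $2$ according as $\delta=+1$ or $\delta=-1$, and $k(b)$ is the other value. This is exactly the set $\mathcal E$.
\end{itemize}
Thus every $b\notin\mathcal E$ with $b_4=1$ lies in the mixed-pair case. When a mixed pair occurs, I should check that the factor $u^{\beta(a)}$ and the $u^{\delta}$-twisted kernel line up with the form $S_+S_-\mathcal F_{1,0}(u^{\pm1},q)$. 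Corollary~\ref{cor:mixed-five-theta} is the template for this check. A monomial $u^{\beta}$ never affects positivity, so the check is only bookkeeping.

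\textbf{The "consequently" clause.} This follows by dividing the signed divisibility identity \eqref{eq:signed-divisibility} by $D$ instead of $D^2$, with the same pairing and local finiteness as in Theorem~\ref{thm:expansion}.

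\textbf{Parity description of $\mathcal E$.} Use the edge signs \eqref{eq:edge-signs}: $\sigma_0=e_4$, $\sigma_2=e_1$, $\sigma_4=e_3$. The condition that these are all $+1$ reads
\[
b_4\equiv b_0,\qquad b_1\equiv b_2,\qquad b_3\equiv b_4\pmod 2.
\]
With $b_4=1$, this forces $b_0$ and $b_3$ to be odd and $b_1\equiv b_2$. The coordinate sum is $1$, so $b_1+b_2\equiv 1-3\equiv0\pmod 2$, which holds automatically. This leaves exactly two cases: $b_1,b_2$ both odd (all five coordinates odd), or $b_1,b_2$ both even.

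For the converse, such $b$ also need $\{k(b),k(Tb)\}=\{0,2\}$. This holds because the sign flip at $s=1$ changes $\sigma_1,\sigma_3$ together, giving the values $0$ and $2$.

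The main obstacle is the sign bookkeeping in the mixed-pair case: showing that the mixed pair, the $u^\delta$ twist, and the orientation of $\mathcal F_{1,0}$ combine correctly, and matching the characterization with the edge-sign parities.
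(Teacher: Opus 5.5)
Your proposal is correct and follows the paper's proof. The even case cancels one Euler factor, the case of odd $s\ge3$ uses $\mathcal F_{1,t}$ with $t\ge1$ and Proposition~\ref{prop:kernel-exact-threshold}, and the $s=1$ case with sign counts $\{2,4\}$ uses the mixed pair $S_+S_-$ with Proposition~\ref{prop:mixed-theta-kernel}, which is exactly what Corollary~\ref{cor:mixed-five-theta} packages. The only difference is bookkeeping for the parity patterns: you read them off the three untouched edge signs $e_4,e_1,e_3$, while the paper analyses $k(b)=0$ and $k(Tb)=0$ separately; the two are equivalent.
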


\begin{proof}
Write $s=b_4$. If $s$ is even, the paired difference is
$\Theta_{Tb}(1-q^s)$; its quotient by $D$ is nonnegative after
canceling the Euler factor at size $s/2$.
If $s=2t+1\ge3$, the factorization \eqref{eq:odd-factor}, with
one Euler denominator in place of two, contains
$\mathcal F_{1,t}(u^\delta,q)$. This is nonnegative by
Proposition~\ref{prop:kernel-exact-threshold}.
When $s=1$, the two changed signs agree, so the pair of sign counts
is either $\{0,2\}$ or $\{2,4\}$. Corollary~\ref{cor:mixed-five-theta}
handles both orientations of the latter pair.

Finally, $k(b)=0$ means that every two adjacent coordinates have
the same parity. Since $b_4=1$, this is equivalent to all coordinates
being odd. Similarly, $k(Tb)=0$ means that
$b_0,b_1+1,b_2+1,b_3,-1$ are all odd, which gives the second
parity pattern. In each case the other sign count is two.
\end{proof}

The exceptional terms cannot all be made nonnegative individually.
For example, $b=(1,-1,-1,1,1)$ has $Q(b)=4$, $k(b)=0$, and
$k(Tb)=2$. Directly from
$S(u,q)=1+uq+u^{-1}q^3+O(q^6)$ and $D(q)^{-1}=1+q^2+O(q^4)$,
\[
 [u^{-1}q^2]\frac{\Theta^{(0)}-q\Theta^{(2)}}{D(q)}=-1.
\]
Here the first numerator term has no $u^{-1}q^2$ term, the
second contributes $-2$, and the Euler denominator contributes
$1$ from the constant $u^{-1}$ term.
Thus this term requires compensation from a different lattice term,
as constructed in the next section.

\section{The sharp Euler-factor threshold}
\label{sec:nine-factors}

To remove a ninth Euler factor, we group paired terms indexed by
different vectors $b$ with $b_4>0$. Define
\[
 \mathsf A=\frac{\Theta^{(0)}-q\Theta^{(2)}}D,\qquad
 \mathsf A'=\frac{\Theta^{(2)}-q\Theta^{(0)}}D,
\]
\[
 \mathsf B=\frac{\Theta^{(2)}-q\Theta^{(4)}}D,\quad
 \mathsf C=\frac{\Theta^{(4)}-q\Theta^{(2)}}D,\quad
 \mathsf E=\frac{(1-q^2)\Theta^{(2)}}D.
\]
Here $\Theta_b=\Theta^{(k(b))}$; the five symbols
$\mathsf A,\mathsf A',\mathsf B,\mathsf C,\mathsf E$ denote the lattice kernels above.
The one-factor results already prove $\mathsf B,\mathsf C,\mathsf E\ge0$.
The additional local inputs are
\begin{equation}\label{eq:three-local-compensations}
 \mathsf B+q^3\mathsf A\ge0,\qquad
 \mathsf C+q^3\mathsf A'\ge0,\qquad
 \mathsf A'+2q^6\mathsf E\ge0.
\end{equation}
The first two are proved in Propositions~\ref{prop:local-L1}
and~\ref{prop:local-L2}; the third is
Proposition~\ref{prop:two-slot-compensation}. Each inequality is coefficientwise in $u$ and $q$.

The following lemmas describe the coefficient allocations and
regroupings used below.

\begin{lemma}[Coefficient allocation]\label{lem:coefficient-allocation}
Fix $r\in\Nzero$. Let $\mathcal P$ and $\mathcal N$ be sets of
positive and negative coefficient labels, respectively, and give each
label $a$ a weight $(d(a),\mathbf v(a))\in\Nzero\times\Z^r$.
Assume that only finitely many labels have $d(a)\le N$ for each
$N\in\Nzero$. If an injection $\iota:\mathcal N\longrightarrow\mathcal P$
preserves the weight, then
\[
 \sum_{p\in\mathcal P}q^{d(p)}\mathbf z^{\mathbf v(p)}
 -\sum_{n\in\mathcal N}q^{d(n)}\mathbf z^{\mathbf v(n)}
 \in\Nzero[z_1^{\pm1},\ldots,z_r^{\pm1}][[q]].
\]
Here $\mathbf z^{\mathbf v}=\prod_{i=1}^r z_i^{v_i}$, with
empty product $1$ when $r=0$. A coefficient of magnitude $c$
is represented by $c$ distinct labels.
The number of positive labels at a weight is its \emph{capacity}.
Sources of the same weight must be assigned different labels.
\end{lemma}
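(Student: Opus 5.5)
The plan is to prove the statement one coefficient at a time, after checking that every coefficient is a finite sum. Fix $N\in\Nzero$ and $\mathbf v\in\Z^r$. By the finiteness hypothesis, only finitely many labels $a\in\mathcal P\cup\mathcal N$ satisfy $d(a)\le N$. Hence only finitely many labels have weight exactly $(N,\mathbf v)$.

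It follows that both displayed sums are well defined in $\Z[z_1^{\pm1},\ldots,z_r^{\pm1}][[q]]$. Each $q^N$ coefficient is a finite Laurent polynomial, because its support lies among the finitely many vectors $\mathbf v(a)$ with $d(a)=N$. Since the difference is computed coefficientwise, the coefficient of $q^N\mathbf z^{\mathbf v}$ equals
\[
 \#\{p\in\mathcal P:(d(p),\mathbf v(p))=(N,\mathbf v)\}
 -\#\{n\in\mathcal N:(d(n),\mathbf v(n))=(N,\mathbf v)\}.
\]

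Next, the injection $\iota$ preserves weight. So it restricts to an injection from the negative labels of weight $(N,\mathbf v)$ into the positive labels of the same weight. The first count is therefore at least the second, and the coefficient is a nonnegative integer. This holds for every weight, which proves membership in $\Nzero[z_1^{\pm1},\ldots,z_r^{\pm1}][[q]]$.

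The remaining sentences of the lemma are conventions and need no proof. They record that a coefficient of size $c$ is split into $c$ labels. They also record that injectivity is exactly the requirement that distinct sources of a common weight use distinct targets, so no capacity is used twice.

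The only point needing care is local finiteness, so that the rearrangement into weight classes is legitimate. This is immediate from the hypothesis on $d$. I expect no real obstacle; the substantive work lies in constructing $\iota$ in the later applications.
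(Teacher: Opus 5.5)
Your proposal is correct and is essentially the paper's argument. Restrict $\iota$ to each full weight $(N,\mathbf v)$, which gives an injection between finite sets by the finiteness hypothesis, and read each coefficient of the difference as the nonnegative difference of those two cardinalities.
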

\begin{proof}
For each full weight, the restriction of $\iota$ is an injection
between finite sets. Subtract their cardinalities. The assumed
finiteness makes these comparisons the coefficients of the displayed
formal series.
\end{proof}

The local inequalities also hold with the shifts required by the
lattice allocation. We first prove the needed difference estimates.

\begin{lemma}\label{lem:compensation-differences}
For every integer $h\ge1$ and $r=1,2$,
\[
 (1-q^{10h})^r\mathsf E\ge0,\qquad
 (1-q^{10h})^r\mathsf B\ge0,\qquad
 (1-q^{10h})^r\mathsf C\ge0.
\]
\end{lemma}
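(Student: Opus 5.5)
The plan is to reuse the Euler-product cancellations already in Proposition~\ref{prop:mixed-theta-kernel} and to absorb the extra factor $(1-q^{10h})^r$ into a squared odd factor of $O(x)^{-2}$. Throughout, $x=q^2$, $E(x)=(x;x)_\infty=D(q)$ and $O(x)=(x;x^2)_\infty$. By \eqref{eq:psi-euler-specialization}, $\psi(x)/E(x)=O(x)^{-2}$. Write $1-q^{10h}=1-x^{5h}$ and factor $5h=2^e m'$ with $m'$ odd. Since $5\mid m'$, we have $m'\ge5$. Repeated use of $1-y^2=(1-y)(1+y)$ gives
\[
 1-x^{5h}=(1-x^{m'})\prod_{i=0}^{e-1}\bigl(1+x^{2^im'}\bigr).
\]
The product on the right has nonnegative coefficients. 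The factor $(1-x^{m'})^r$ with $r\le2$ cancels against the factor $(1-x^{m'})^{-2}$ of $O(x)^{-2}$. So $(1-x^{5h})^r/O(x)^2$ is $\prod_i(1+x^{2^im'})^r$ times a product of inverse factors $(1-x^k)^{-1}$, hence nonnegative. Moreover, this cancellation never uses the factor at $k=1$.

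For $\mathsf E$, the plan is as follows. Since $k=2$ gives $u^{k/2-1}=1$, we have $\Theta^{(2)}=S_+^3S_-^2$. Using \eqref{eq:mixed-theta-addition},
\[
 \Theta^{(2)}=S_+^2S_-\,\psi(x)\mathcal T(u,q),
\]
so that
\[
 (1-x^{5h})^r\mathsf E=S_+^2S_-\mathcal T\cdot\frac{(1-x)(1-x^{5h})^r}{O(x)^2}.
\]
The factor $1-x$ cancels the $k=1$ factor of $O(x)^{-2}$, leaving one $(1-x)^{-1}$. The factor $(1-x^{5h})^r$ is absorbed at $m'\ge5$ as described above. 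All remaining factors are nonnegative theta series or products of $(1-x^k)^{-1}$ and $(1+x^j)$. The result is nonnegative, and no term of $\Theta^{(2)}$ is needed for compensation.

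For $\mathsf B$ and $\mathsf C$, the plan is to follow the proof of Corollary~\ref{cor:mixed-five-theta}. We write $\mathsf C=S_-(S_+S_-\mathcal F_{1,0}(u,q))$ and $\mathsf B=uS_-(S_+S_-\mathcal F_{1,0}(u^{-1},q))$. Then we expand $S_+S_-\mathcal F_{1,0}$ exactly as in Proposition~\ref{prop:mixed-theta-kernel}: it is $\mathcal T$ times a coefficientwise sum, from \eqref{eq:kernel-numerator} and the two telescopes at $t=0$, of terms $x^{\cdot}(1-x^d)^2/O(x)^2$. At $t=0$ the correction term vanishes. Each such term equals $[d]_x^2\prod_{k\ge3\text{ odd}}(1-x^k)^{-2}$, where the cancellation uses only the $k=1$ factor. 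Multiplying this by $(1-x^{5h})^r$ and cancelling $(1-x^{m'})^r$ against the intact factor $(1-x^{m'})^{-2}$ with $m'\ge5$ keeps every summand nonnegative. Multiplication by $S_\pm$, by $u$, and the substitution $u\mapsto u^{-1}$ preserve nonnegativity. Local finiteness holds exactly as before, because the starting degrees are quadratic in the theta and telescope indices.

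The main point to check is that the cancellation site $m'$ is distinct from the site already consumed by $(1-x)$ or $(1-x^d)^2$. This holds because $m'$ is odd and at least $5$, and the earlier cancellations use only the $k=1$ factor. It is also why the odd part of $5h$, rather than $5h$ itself, is the right factor to cancel when $h$ is even. Once this is verified, the three inequalities follow for both $r=1$ and $r=2$.
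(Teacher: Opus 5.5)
Your proposal is correct. For $\mathsf B$ and $\mathsf C$ it follows the paper's argument. After the mixed-theta factorization, each telescope term is a nonnegative series times $\prod_{k\ge3,\,k\text{ odd}}(1-x^k)^{-2}$, and $(1-x^{5h})^r$ is absorbed by the two copies of an odd Euler factor of size at least five. The paper always uses size $5$, via $(1-x^{5h})/(1-x^5)=1+x^5+\cdots+x^{5(h-1)}$. Your passage to the odd part $m'$ of $5h$ and the factors $1+x^{2^im'}$ is therefore valid but unnecessary.

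For $\mathsf E$ your route differs. The paper does not use the theta factor at all. It proves the scalar inequality $(1-x)(1-x^{5h})^2/E(x)\ge0$ by cancelling the Euler factors at sizes $1$ and $5h$. It then injects partitions with no part $1$ or $5h$ into those of size $5h$ larger: raise the largest part by $5h$, and send the empty partition to $(5h-2,2)$.

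You instead split $\Theta^{(2)}=S_+^2S_-\cdot S_+S_-$, extract $\psi(x)$, and use $\psi(x)/E(x)=O(x)^{-2}$. This supplies two copies of every odd Euler factor, so $(1-x)$ and $(1-x^{m'})^r$ cancel directly and no injection is needed.

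Your version treats the three kernels uniformly. The paper's version gives a stronger statement that does not depend on the numerator: $(1-x)(1-x^{5h})^2/E\ge0$ holds with any nonnegative series in place of $\Theta^{(2)}$. The price is a short combinatorial argument.
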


\begin{proof}
Put $x=q^2$ and $d=5h\ge5$. For $\mathsf E$, the first
power cancels the distinct Euler factors at sizes one and $d$.
For the second power it remains to prove
\[
 (1-x^d)\prod_{k\notin\{1,d\}}(1-x^k)^{-1}\ge0.
\]
A nonempty partition with neither a part one nor a part $d$ maps
injectively to one whose size is $d$ larger, by increasing its
largest part by $d$.
The new largest part is unique and at least $d+2$, so the map is
reversible on its image. For the empty source use $(d-2,2)$;
its parts are allowed and its largest part is less than $d+2$.
Thus this additional image is disjoint from every nonempty-source
image, proving the inequality at every degree.

For $\mathsf B$ and $\mathsf C$, the mixed-theta proof at shift
zero expresses each as a nonnegative series times
\[
 R(x)=\prod_{\substack{k\ge3\\ k\text{ odd}}}(1-x^k)^{-2}.
\]
Indeed each square in the telescopes becomes a nonnegative finite
geometric square times this same product. Its two factors at size
five absorb $(1-x^{5h})^r$ for $r\le2$, since
$(1-x^{5h})/(1-x^5)=1+x^5+\cdots+x^{5(h-1)}$.
\end{proof}

In particular, the first two inequalities in
\eqref{eq:three-local-compensations} imply, for every $h\ge0$,
\begin{equation}\label{eq:shifted-local-compensations}
 \mathsf B+q^{3+10h}\mathsf A\ge0,
 \qquad \mathsf C+q^{3+10h}\mathsf A'\ge0.
\end{equation}
For example, the left side of the first inequality is
$q^{10h}(\mathsf B+q^3\mathsf A)+(1-q^{10h})\mathsf B$.

\begin{proof}[Proof of Theorem~\ref{thm:nine-factors}]
By the paired identity, after expanding the finite geometric factor,
\begin{equation}\label{eq:nine-slot-expansion}
 \frac\Omega{[5]_zD}
 =\sum_{\substack{b\in\A\\s=b_4>0}}
   \sum_{v=0}^{s-1}q^{Q(b)}z^{Z(b)+5v}
               \frac{\Theta_b-q^s\Theta_{Tb}}D,
 \qquad Z(b)=L(\rho b).
\end{equation}
We call an index $(b,v)$ a \emph{lattice slot}; it denotes one
occurrence of the paired kernel in this sum.
Corollary~\ref{cor:exceptional-pairs} leaves only the slots with
$s=1$ and parity pattern $11111$ or $10011$.
The five bits give the parities of $b_0,\ldots,b_4$, with one
meaning odd. Their kernels are $\mathsf A$ and $\mathsf A'$,
respectively. We give each exceptional slot one or two
nonexceptional target slots with the same integer $z$ exponent.

Use the displacements
\[
 w_0=(0,-2,0,1,1),\quad w_1=(1,0,-2,0,1),
 \quad v_0=(1,-1,-1,1,0).
\]
For $b_4=1$, direct calculation gives
\begin{align*}
 Q(b+w_0)-Q(b)&=6-5b_1,& Z(b+w_0)&=Z(b),\\
 Q(b+w_1)-Q(b)&=6-5b_2,& Z(b+w_1)&=Z(b)-5,\\
 Q(b-v_0)-Q(b)&=7-5(b_0+b_3),&Z(b-v_0)&=Z(b).
\end{align*}
The first two targets have last coordinate two and use slots zero
and one, respectively; the third has last coordinate one. Thus all
three assignments preserve the $z$ monomial in
\eqref{eq:nine-slot-expansion}. The target vectors belong to $\A$,
so their $Q$-values are at least one.

\smallskip\noindent
\textbf{Case 1: all coordinates of $b$ are odd.}
Use the following rule in the stated order:
if $b_1\ge1$, assign $(b+w_0,0)$; otherwise, if $b_2\ge1$,
assign $(b+w_1,1)$; if both $b_1,b_2\le-1$, assign $(b-v_0,0)$.
In either of the first two cases write the selected coordinate
as $2h+1$, $h\ge0$. The target kernel is $\mathsf E$, and the
combined contribution, apart from its common $z$ monomial, is
\[
 q^{Q(b)}\mathsf A+q^{Q(b)+1-10h}\mathsf E
 =q^{Q(b)}\frac{\Theta^{(0)}-q^3\Theta^{(2)}}D
   +q^{Q(b)+1-10h}(1-q^{10h})\mathsf E.
\]
The first term is $q^{Q(b)}S(u,q)^3\mathcal F_{1,1}(u^{-1},q)\ge0$;
the second is nonnegative by Lemma~\ref{lem:compensation-differences}
(and is zero at $h=0$).
In the third case write $b_0+b_3=-b_1-b_2=2h+2$, $h\ge0$.
The target has kernel $\mathsf B$ and lies $3+10h$ lower in
$Q$, so positivity follows from the first inequality in
\eqref{eq:shifted-local-compensations}.

For example, the exceptional vector $b=(1,-1,-1,1,1)$ considered
above has $Q(b)=4$ and $Z(b)=0$. The rule assigns
$c=b-v_0=(0,0,0,0,1)$, with $Q(c)=1$, $Z(c)=0$, and kernel
$\mathsf B$. Both slots have index zero, so their group is
\[
 q^4\mathsf A+q\mathsf B=q(\mathsf B+q^3\mathsf A)\ge0.
\]

\smallskip\noindent
\textbf{Case 2: $b$ has parity $10011$.}
Write $b_1=2p$ and $b_2=2r$.
If $p+r\le-1$, assign $(b-v_0,0)$. Its kernel is $\mathsf C$;
writing $h=-p-r-1\ge0$, it lies $3+10h$ lower in $Q$.
The second inequality in \eqref{eq:shifted-local-compensations}
proves positivity of the group.
If $p+r\ge0$, assign both $(b+w_0,0)$ and $(b+w_1,1)$.
Their kernels are $\mathsf E$. For this comparison we temporarily
work in $\Z[u,u^{-1}]((q))$, the ring of $q$-Laurent series with
exponents bounded below. The group is
\[
 q^{Q(b)}\bigl(\mathsf A'
       +q^6(q^{-10p}+q^{-10r})\mathsf E\bigr).
\]
The third inequality in \eqref{eq:three-local-compensations}
reduces this case to
\[
 (q^{-10p}+q^{-10r}-2)\mathsf E\ge0.
\]
If $p,r\ge0$, each difference $q^{-10p}-1$ is a monomial
multiple of the first difference in
Lemma~\ref{lem:compensation-differences}; for $p=0$ it is zero,
and the same holds for $r$. Otherwise interchange
$p,r$ and write $p=m\ge n\ge1$, $r=-n$. Then
\[
 (q^{-10m}+q^{10n}-2)\mathsf E
 =q^{-10m}(1-q^{10(m-n)})\mathsf E
   +q^{-10n}(1-q^{10n})^2\mathsf E\ge0.
\]
When $m=n$, the first term is zero.
Multiplying the comparison by $q^{Q(b)+6}$ returns it to
$\Z[u,u^{-1}][[q]]$. If $p,r\ge0$, the starting exponents
$Q(b)+6-10p$ and $Q(b)+6-10r$ are the two target values of $Q$.
In the mixed-sign case, $Q(b)+6-10m$ is one target value of $Q$,
and $Q(b)+6-10n$ is no smaller. All these exponents are at least one.

For example, $b=(1,2,-2,-1,1)$ has $p=1$, $r=-1$,
$Q(b)=11$, and $Z(b)=-5$. The targets $(b+w_0,0)$ and
$(b+w_1,1)$ have $(Q,Z,v)=(7,-5,0)$ and $(27,-10,1)$,
respectively, so their common $z$-weight is $z^{-5}$. Their group is
\[
 \begin{aligned}
 &z^{-5}(q^{11}\mathsf A'+q^7\mathsf E+q^{27}\mathsf E)\\
 &\qquad=z^{-5}\!\left[q^{11}(\mathsf A'+2q^6\mathsf E)
                 +q^7(1-q^{10})^2\mathsf E\right]\ge0.
 \end{aligned}
\]
Thus the displayed negative shift has starting degree seven after
restoration.

\smallskip\noindent
\textbf{Disjointness of the assignments.}
Within each case the assignment is a translation and hence injective.
The following table excludes coincidences between different cases.
Here $s'$ is the last coordinate of the target vector and $v'$ is
its slot index:
\[
\begin{array}{c|c|c|c|c}
 \text{source case}&s'&v'&\text{target parity}&\text{kernel}\\\hline
 11111,\ b_1\ge1&2&0&11100&\mathsf E\\
 11111,\ b_1\le-1,b_2\ge1&2&1&01110&\mathsf E\\
 11111,\ b_1,b_2\le-1&1&0&00001&\mathsf B\\
 10011,\ p+r\le-1&1&0&01101&\mathsf C\\
 10011,\ p+r\ge0\ (w_0)&2&0&10000&\mathsf E\\
 10011,\ p+r\ge0\ (w_1)&2&1&00010&\mathsf E
\end{array}
\]
Different $v'$ values denote different slots even if their target
vectors were equal. The parity entries with the same $s',v'$
are distinct. All target kernels are among the nonexceptional
kernels, so no exceptional source is itself consumed as a target.
The priority rule makes the source cases exhaustive and disjoint.

Group \eqref{eq:nine-slot-expansion} by these finite disjoint
assignments and leave every unused nonexceptional slot on its own.
Every group is nonnegative. For each group $G$, put
\[
 \nu(G)=\min\{Q(c):(c,v)\text{ is an original slot in }G\}.
\]
Each slot kernel has no negative $q$ exponents, so a group
contributing in degrees at most $N$ must have $\nu(G)\le N$.
Mark one slot attaining this minimum in each such group.
The marked slots are distinct because the groups are disjoint.
By \eqref{eq:coordinate-bound}, only finitely many vectors have
$Q(c)\le N$, and each has finitely many slots. Hence only finitely
many groups contribute through degree $N$. Regrouping is legitimate
coefficient by coefficient and proves $\Omega/([5]_zD)\ge0$.
Multiplication by $D^{-(9-M)}$ proves the assertion for $0\le M\le9$.
The reverse implication is Proposition~\ref{prop:global-obstruction}.
\end{proof}

\subsection{An obstruction to complete Euler-factor removal}

The following coefficient excludes removal of ten or more Euler factors.

\begin{proposition}\label{prop:global-obstruction}
One has
\begin{equation}\label{eq:global-obstruction}
 [q^7u^0z^0]\frac{\Omega(u,z,q)}{[5]_z}=-4.
\end{equation}
Consequently, if $M\in\Nzero$ and
$D(q)^M\Psi(u,z,q)/[5]_z$ has nonnegative coefficients, then $M\le9$.
\end{proposition}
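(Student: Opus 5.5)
Since the obstruction is a single coefficient, the plan is a finite computation organized so that each step is checkable. By \eqref{eq:signed-divisibility}, $\Omega/[5]_z=\Xi$, and by the pairing in the proof of Theorem~\ref{thm:expansion} (before division by $D^2$),
\[
 \frac{\Omega}{[5]_z}=\sum_{\substack{b\in\A\\b_4>0}}\sum_{v=0}^{b_4-1}
 q^{Q(b)}z^{L(\rho b)+5v}\bigl(\Theta_b-q^{b_4}\Theta_{Tb}\bigr).
\]
Extracting $[q^7z^0]$ therefore involves only vectors $b$ with $b_4>0$, $Q(b)\le7$, and $L(\rho b)+5v=0$ for some $0\le v<b_4$. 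By \eqref{eq:coordinate-bound} with $Q\le7$, each $|a_i-1/5|\le2\sqrt{7/5}<2.37$, so every coordinate lies in $\{-2,\dots,2\}$. Together with the constraint $\sum a_i=1$, this gives a short explicit list.

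\textbf{Step 1: enumerate the vectors.} List all $b\in\A$ with $b_4\ge1$, $Q(b)\le7$ and $L(\rho b)\equiv0\pmod5$ with $-5(b_4-1)\le L(\rho b)\le0$. For each, record $Q(b)$, $s=b_4$, $k(b)$ and $k(Tb)$; the parities are read off from \eqref{eq:edge-signs}.

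\textbf{Step 2: expand the kernels.} The numerator terms are $\Theta^{(k)}=u^{k/2-1}S(u,q)^{5-k}S(u^{-1},q)^k$, which involve no $D$. I need $[u^0q^{m}]$ of $\Theta^{(0)},\Theta^{(2)},\Theta^{(4)}$ for $m\le6$. These follow from $S(u,q)=1+uq+u^{-1}q^3+u^2q^6+O(q^{10})$, and the required expansions are small. Each vector $b$ then contributes $[u^0q^{7-Q(b)}]\Theta^{(k(b))}-[u^0q^{7-Q(b)-s}]\Theta^{(k(Tb))}$, and summing these gives $-4$.

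\textbf{Step 3: deduce the consequence.} If $D^M\Psi/[5]_z\ge0$ for some $M\ge10$, then multiplying by the nonnegative series $D^{-(M-10)}$ gives $D^{10}\Psi/[5]_z=\Omega/[5]_z\ge0$, contradicting \eqref{eq:global-obstruction}. Hence $M\le9$.

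The main obstacle is bookkeeping in Steps~1--2. It is easy to miss a vector or mislabel a sign count, and a cancelling pair can hide the negative total. To guard against this, I would cross-check by computing $[q^7u^0]\Omega$ directly as a Laurent polynomial in $z$ using Proposition~\ref{prop:ten-core}. That means listing ten-cores of size $34$ with BG-rank $0$ together with their cranks, or equivalently summing $q^{Q(a)}z^{L(a)}[u^0]\Theta_a$ over $Q(a)\le7$. I would then perform the polynomial division by $[5]_z$ and confirm that the constant coefficient is $-4$.
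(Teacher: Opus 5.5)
Your proposal is correct and follows the paper's proof. The paper also starts from the bare pairing identity, bounds $b_4\le2$ via \eqref{eq:coordinate-bound}, and enumerates the admissible slots. It parametrizes by $y=b_2$, $w=b_3$ and completes squares, finding exactly $(0,0,0,0,1)$, $(1,-1,-1,1,1)$, $(0,-2,0,1,2)$ (each with $v=0$) and $(1,0,-2,0,2)$ (with $v=1$). These contribute $(12-12)+(0-6)+1+1=-4$, and the paper then concludes $M\le9$ exactly as in your Step~3. Since the claim is a single number, you still need to display this list and these kernel coefficients for the plan to become a proof.
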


\begin{proof}
Before division by $D^2$, the paired identity in the proof of
Theorem~\ref{thm:expansion} reads
\begin{equation}\label{eq:bare-pairing}
 \frac{\Omega}{[5]_z}
 =\sum_{\substack{b\in\A\\s=b_4>0}}
 q^{Q(b)}\sum_{v=0}^{s-1}z^{L(\rho b)+5v}
                  (\Theta_b-q^s\Theta_{Tb}).
\end{equation}
To contribute to $q^7z^0$, one must have $Q(b)\le7$ and
$L(\rho b)+5v=0$. The coordinate bound
\eqref{eq:coordinate-bound} gives
$s\le1/5+2\sqrt{7/5}<3$, so $s=1$ or $2$.
Writing $y=b_2$ and $w=b_3$, the coordinate sum and the $z$-condition
force
\[
 b=(2-2s+5v+y+2w,\ s-1-5v-2y-3w,\ y,\ w,\ s).
\]
There are three possible pairs $(s,v)$. Direct completion of squares
gives, respectively,
\begin{align*}
 40Q(b)&=(20y+25w-1)^2+175w^2-70w+39,
     &&(s,v)=(1,0),\\
 40Q(b)&=(20y+25w-16)^2+175w^2-320w+344,
     &&(s,v)=(2,0),\\
 40Q(b)&=(20y+25w+34)^2+175w^2+180w+244,
     &&(s,v)=(2,1).
\end{align*}
In the first case $Q\le7$ forces $w=0$ or $1$, and then
$(y,w)=(0,0)$ or $(-1,1)$. In the second case it forces $w=1$
and then $y=0$. In the third case $w=-1$ or $0$; when $w=-1$
the inequality $(20y+9)^2\le41$ has no integer solution, and when
$w=0$ it forces $y=-2$. Thus the following list is exhaustive:
\[
\begin{array}{c|c|c|c|c}
 b&v&Q(b)&k(b)&k(Tb)\\\hline
 (0,0,0,0,1)&0&1&2&4\\
 (1,-1,-1,1,1)&0&4&0&2\\
 (0,-2,0,1,2)&0&7&2&2\\
 (1,0,-2,0,2)&1&7&2&2
\end{array}
\]
Let
$\Theta^{(k)}=u^{k/2-1}S(u,q)^{5-k}S(u^{-1},q)^k$.
Using
\[
 S(u,q)=1+uq+u^{-1}q^3+u^2q^6+O(q^{10})
\]
gives
\begin{align*}
 [q^6u^0]\Theta^{(2)}&=12,&
 [q^5u^0]\Theta^{(4)}&=12,\\
 [q^3u^0]\Theta^{(0)}&=0,&
 [q^2u^0]\Theta^{(2)}&=6,&
 [q^0u^0]\Theta^{(2)}&=1.
\end{align*}
The first value $12$ is the sum of six choices of one
$q^3$ term from each sign and six choices of one $q^3$ term and
three $q$ terms. The second value $12$ chooses one of four $q^3$
terms in $S(u^{-1},q)^4$ and two of the other three $q$ terms.
The coefficient $6$ chooses one $q$ term from each sign, in
$3\cdot2$ ways. These account for all possible degrees.
The four rows of the table contribute
\[
 (12-12)+(0-6)+1+1=-4
\]
to \eqref{eq:bare-pairing}, proving \eqref{eq:global-obstruction}.

If $M\ge10$ and $D^M\Psi/[5]_z$ were nonnegative, multiplication
by the nonnegative series $D^{-(M-10)}$ would make
$D^{10}\Psi/[5]_z=\Omega/[5]_z$ nonnegative, contradicting the
coefficient just calculated.
\end{proof}

\begin{remark}\label{rem:nine-unimodality}
The numerator of a slice in the nine-factor theorem need not be
unimodal. For example, put
\[
 P_7(z)=[q^7u^0]\frac{\Omega(u,z,q)}{D(q)}.
\]
Writing $Z_r=z^r+z^{-r}$, one has
\[
 P_7(z)=[5]_z\bigl(11+27Z_1+18Z_2+24Z_3+6Z_4+5Z_5+4Z_6\bigr).
\]
This follows from \eqref{eq:Theta-Omega} by using
$S(u,q)=1+uq+u^{-1}q^3+u^2q^6+O(q^{10})$ and
$D^{-1}=1+q^2+2q^4+3q^6+O(q^8)$.
Only $Q(a)\le7$ contributes, and \eqref{eq:coordinate-bound}
restricts every coordinate to $-2\le a_i\le2$.
The coefficients of $z^{-1},1,z$ in the symmetric Laurent polynomial
$z^{-2}P_7(z)$ are $107,101,107$. Thus the numerator is not unimodal,
although the displayed quotient is nonnegative.
The criterion for symmetric, unimodal numerators
of~\cite[Lemma~3.1]{BGRT} therefore does not apply directly to this slice.
\end{remark}

\section{Theta identities for the local inequalities}\label{sec:common-theta}

The three local inequalities use the same quartic slices and
transformations of the BG index. The identities are proved directly from the theta series, so they
apply to the signed kernels that occur in the local inequalities.

\begin{lemma}[Common theta-index transformations]
\label{lem:common-theta-index-transformations}
For $d\in\{4,5\}$ and $k\in\{0,2,4\}$ with $k\le d$, put
\[
 U_{d,k}(u,q)=u^{k/2-1}S(u,q)^{d-k}S(u^{-1},q)^k,
 \qquad f(t)=t(2t-1).
\]
Write a theta term as an integer tuple $(j_1,\ldots,j_d)$, with
$k$ signs $\sigma_i=-1$, the remaining signs $\sigma_i=1$, and
\[
 J=k/2-1+\sum_i\sigma_i j_i,
 \qquad N=\sum_i f(j_i).
\]
For every $h\in\Z$, the following bijections change $(J,N)$
as indicated:
\[
\begin{array}{c|c|c}
 \text{index map}&\text{new }J&N'-N\\\hline
 j_i\mapsto(1+\sigma_i)/2-j_i&d-2-J&d-2-2J\\
 j_i\mapsto(1-\sigma_i)/2-j_i&-2-J&2J+2\\
 j_i\mapsto j_i+h\sigma_i&J+dh&2dh^2+(4J+4-d)h
\end{array}
\]
In particular, the degree changes do not depend on $k$. These
relations therefore apply to any finite linear combination
$\sum_k c_k(q)U_{d,k}(u,q)$ with $c_k(q)\in\Z[[q]]$. Relations with a negative monomial
shift are interpreted after extracting $u^J$ in $\Z((q))$, the
ring of Laurent series whose exponents are bounded below.
Equivalently, move the negative power to the other side.
For $d=5$, coefficientwise nonnegativity at $J=-1,0,1$ implies
coefficientwise nonnegativity at every integer $J$.
\end{lemma}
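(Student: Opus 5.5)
The plan is to verify the three index maps term by term on a single theta monomial. Then I will pass to coefficient extraction and finally reduce residues modulo five. Expanding $U_{d,k}$ as a sum over $(j_1,\ldots,j_d)\in\Z^d$, a tuple contributes $u^Jq^N$ with $J=k/2-1+\sum_i\sigma_ij_i$ and $N=\sum_if(j_i)$. The sign vector $\sigma$ is fixed by $k$, and each displayed map acts coordinatewise by an affine involution or translation of $\Z$. It is therefore a bijection of $\Z^d$ that preserves $\sigma$.

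The key computations are one-variable identities for $f(t)=t(2t-1)$:
\[
 f(1-j)=f(j)+1-2j,\qquad f(-j)=f(j)+2j,\qquad
 f(j+h\sigma)=f(j)+4h\sigma j+2h^2-h\sigma .
\]
For the first map, coordinates with $\sigma_i=1$ go to $1-j_i$ and those with $\sigma_i=-1$ go to $-j_i$. Thus $\sum\sigma_ij_i'=(d-k)-\sum\sigma_ij_i$, which gives $J'=d-2-J$. Also
\[
 N'-N=(d-k)-2\sum\sigma_ij_i=d-2-2J,
\]
after substituting $\sum\sigma_ij_i=J-k/2+1$.

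The second map swaps these roles. This gives $\sum\sigma_ij_i'=-\sum\sigma_ij_i-k$, hence $J'=-2-J$ and $N'-N=2\sum\sigma_ij_i+k=2J+2$.

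For the translation, $\sum\sigma_ij_i'=\sum\sigma_ij_i+dh$, so $J'=J+dh$. Summing the third identity over $i$ gives
\[
 N'-N=4h\Bigl(J-\tfrac k2+1\Bigr)+2dh^2-h(d-2k)=2dh^2+(4J+4-d)h .
\]
The $k$-dependence cancels in all three cases, and this cancellation is the point of the lemma.

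Because each shift depends only on $(J,h,d)$, extracting $u^J$ gives
\[
 [u^{J'}]U_{d,k}=q^{N'-N}[u^J]U_{d,k}
\]
with the same monomial factor for every $k$. By linearity this identity transfers to $\sum_kc_k(q)U_{d,k}$. When the shift is negative, I will read the identity in $\Z((q))$, or equivalently multiply through by the opposite power. Local finiteness is inherited from the quadratic growth of $N$.

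For the last assertion with $d=5$, a monomial factor $q^m$ does not affect coefficientwise nonnegativity. So nonnegativity at $J$ is equivalent to nonnegativity at $J+5h$, at $3-J$, and at $-2-J$. Residues $\{-1,0,1\}\pmod5$ are reached directly by translation. For $J\equiv2$ I will apply $J\mapsto-2-J$, which gives residue $1$. For $J\equiv3$ I will apply $J\mapsto3-J$, which gives residue $0$. A final translation then lands in $\{-1,0,1\}$.

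The only delicate point is bookkeeping: confirming that the $k$-terms cancel exactly in $N'-N$ and that the sign conventions $(1\pm\sigma_i)/2$ are matched correctly to $\sigma_i=\pm1$. I expect no genuine obstacle beyond this.
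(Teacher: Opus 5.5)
Your proposal is correct and follows essentially the same route as the paper: the same three one-variable identities for $f$, summed using $\sum_i\sigma_ij_i=J-k/2+1$ and $\sum_i\sigma_i=d-2k$, then linearity in the $q$-coefficients $c_k(q)$ and a reduction of $J$ modulo five. The only difference is cosmetic: for $J\equiv2\pmod5$ you use the reflection $J\mapsto-2-J$, whereas the paper uses $J\mapsto3-J$ for both residues $2$ and $3$.
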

\begin{proof}
For $\sigma\in\{-1,1\}$, direct expansion gives
\[
 f\bigl((1+\sigma)/2-j\bigr)-f(j)
   =(1+\sigma)/2-2\sigma j,
\]
\[
 f\bigl((1-\sigma)/2-j\bigr)-f(j)
   =(1-\sigma)/2+2\sigma j,
\]
\[
 f(j+h\sigma)-f(j)=4h\sigma j+2h^2-h\sigma.
\]
Sum over $i$, using $\sum_i\sigma_i=d-2k$ and
$\sum_i\sigma_i j_i=J-k/2+1$. The first two maps are involutions,
and the third has inverse obtained by replacing $h$ with $-h$.
The change of total degree is common to all $k$, proving the
assertion for scalar linear combinations, even when their coefficients
are signed. A coefficient in $c_k(q)$ is left unchanged by each
index map, so the same shift also applies to every term of the
product. When $d=5$, translation
reduces $J$ to one of $-1,0,1,2,3$. The first reflection sends
$2$ to $1$ and $3$ to $0$. Each relation is a bijection with a
fixed monomial degree shift, and therefore preserves coefficient
signs in both directions. All relevant theta sums are locally
finite because $f(j)\ge0$ and tends to infinity with $|j|$.
\end{proof}

\begin{lemma}[Classical quartic slices]\label{lem:quartic-slices}
Put $X=S(u,q)$, $Y=S(u^{-1},q)$, and $x=q^2$. Define
\[
 \phi(x)=\sum_{r\in\Z}x^{r^2},\qquad
 \psi(x)=\sum_{r\ge0}x^{r(r+1)/2}
        =\sum_{j\in\Z}x^{j(2j-1)},\qquad E(x)=(x;x)_\infty.
\]
The elementary addition, duplication, and two-core identities are
\begin{equation}\label{eq:l1-classical-identities}
 \psi(x)^2=\phi(x)\psi(x^2),\quad
 \phi(x)^2=\phi(x^2)^2+4x\psi(x^4)^2,\quad
 \psi(x)=\frac{(x^2;x^2)_\infty^2}{(x;x)_\infty}.
\end{equation}
For $P=\phi(x)$, $M=\phi(-x)$, $T_0=\psi(x^2)$ and
$U_0=u^{-1}X^4$, $U_2=X^2Y^2$, $U_4=uY^4$, one has
\begin{equation}\label{eq:l1-hadamard-table}
\begin{array}{c|ccc}
 s &[u^s]U_0 &[u^s]U_2 &[u^s]U_4\\ \hline
 0&4qT_0^3&P^2T_0&4qT_0^3\\
 1&(P^3-M^3)/2&2qPT_0^2&(P^3+M^3)/2\\
 -1&(P^3+M^3)/2&2qPT_0^2&(P^3-M^3)/2.
\end{array}
\end{equation}
More generally,
\begin{equation}\label{eq:quartic-fixed-sum}
 [u^m]X^4=
 \begin{cases}
 4q^{(m^2-2m+3)/2}\psi(q^4)^3,&m\text{ odd},\\
 q^{2r^2-2r}\bigl(P^3+(-1)^rM^3\bigr)/2,&m=2r.
 \end{cases}
\end{equation}
\end{lemma}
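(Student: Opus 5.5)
The plan is to derive everything from the index calculus already used in Proposition~\ref{prop:kernel} and \eqref{eq:mixed-theta-addition}, together with the three classical identities in \eqref{eq:l1-classical-identities}. Those identities are standard: the two-core formula is the case $u=1$ of \eqref{eq:P-def} after Jacobi triple product, the duplication formula comes from splitting $r^2+s^2$ by the parity of $r+s$, and $\psi(x)^2=\phi(x)\psi(x^2)$ comes from the change of variables $(a,b)\mapsto(a+b,a-b)$ on the two $f$-indices. I would verify each by a one-line bijection on index pairs and then concentrate on \eqref{eq:quartic-fixed-sum}.

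For $[u^m]X^4$ I will group the four indices into two pairs. By \eqref{eq:S-coefficient},
\[
 [u^J]X^2=q^{J^2-J}\vartheta_0(x)\ \text{or}\ q^{J^2-J+1}\vartheta_1(x),
\]
according to the parity of $J$. Moreover $\vartheta_0(x)=\phi(x^2)$ and $x\vartheta_1(x)\cdot x^{-1/2}$-type normalisations give $\vartheta_1(x)=\psi(x^4)$ up to the standard shift; indeed $2r(r+1)=4\cdot r(r+1)/2$, so $\vartheta_1(x)=\psi(x^4)$.

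Then
\[
 [u^m]X^4=\sum_{J}[u^J]X^2\,[u^{m-J}]X^2.
\]
Completing the square in $J$ gives $J^2-J+(m-J)^2-(m-J)=2(J-m/2)^2+m^2/2-m$.

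\emph{Odd $m$.} Exactly one of $J,m-J$ is odd. The sum becomes $q^{(m^2-2m+3)/2}\,\phi(x^2)\psi(x^4)\cdot\sum_{\text{half-integers}}q^{2(J-m/2)^2}$. The last sum should reproduce a factor $2\,\psi$-type series, and the duplication and addition identities collapse the product to $4\psi(q^4)^3$.

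\emph{Even $m=2r$.} Split by the parity of $J$. The even-even terms give $\phi(x^2)^2$ weighted by $\sum_{J\equiv r}$, and the odd-odd terms give $x\psi(x^4)^2$. I will rewrite the resulting lattice sums in $J-r$ as $\phi(x^2)$-slices. Using $\phi(x)^2=\phi(x^2)^2+4x\psi(x^4)^2$ together with its companion $\phi(-x)^2=\phi(x^2)^2-4x\psi(x^4)^2$, the combination should become $(P^3\pm M^3)/2$. Here $(P^3+(-1)^rM^3)/2$ is exactly the part of $P^3=\phi(q^2)^3$ whose exponent parity is fixed by $r$. I expect to organise this more cleanly by viewing $[u^{2r}]X^4$ directly as a sum over four indices. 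Via the map $(j_i)\mapsto$ three differences plus a sum, in the spirit of \eqref{eq:mixed-theta-addition}, the exponent becomes $2r^2-2r+$ a sum of three squares in $x$, with a parity constraint selected by $r$. This yields $(P^3+(-1)^rM^3)/2$ immediately; I will take this as the primary route.

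The table \eqref{eq:l1-hadamard-table} then follows by specialisation. The $U_0$ column is \eqref{eq:quartic-fixed-sum} at $m=s+1$. The $U_4$ column follows from $U_4(u)=u\,X(u^{-1})^4$, which gives $[u^s]U_4=[u^{1-s}]X^4$. For $U_2$ I will use \eqref{eq:mixed-theta-addition}, namely $X^2Y^2=\psi(x)^2\mathcal T(u,q)^2$; combined with $\psi(x)^2=\phi(x)\psi(x^2)$ this gives $[u^s]\mathcal T^2=q^{s^2/2}\cdot(\phi\text{ or }\psi\text{ slice})$. The main obstacle is the bookkeeping in the even case of \eqref{eq:quartic-fixed-sum}: choosing the linear change of four indices so that the parity condition cleanly produces the $\pm M^3$ term. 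I would first test it numerically against the expansion $S=1+uq+u^{-1}q^3+u^2q^6+\cdots$ at low order, including the values $6$, $12$, $12$ already recorded in Proposition~\ref{prop:global-obstruction}, to fix the signs.
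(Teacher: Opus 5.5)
Your plan is sound, and the pair-convolution route you write out ($X^4=X^2\cdot X^2$ via \eqref{eq:S-coefficient}) is a genuinely different proof from the paper's. However, one constant is wrong, and as written it breaks both parity cases.

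The sum $\vartheta_1(x)=\sum_{r\in\Z}x^{2r(r+1)}$ runs over all integers, and $r$ and $-1-r$ give the same value of $r(r+1)$. Hence $\vartheta_1(x)=2\psi(x^4)$ (this is the paper's $b$), not $\psi(x^4)$. With your value:
\begin{itemize}
\item the odd case yields $2q^{(m^2-2m+3)/2}\psi(q^4)^3$ instead of $4q^{(m^2-2m+3)/2}\psi(q^4)^3$;
\item your odd--odd term $x\psi(x^4)^2$ is a quarter of the correct $x\vartheta_1(x)^2$, so the even combination cannot become $(P^3\pm M^3)/2$.
\end{itemize}
Separately, the $q^{1/2}$ from the half-integer sum is already contained in your prefactor $q^{(m^2-2m+3)/2}$. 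What remains is $\sum_k q^{2k(k+1)}=2\psi(q^4)$.

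After this correction your route closes at once.
\begin{itemize}
\item For odd $m$, the addition identity gives $\vartheta_0(x)\vartheta_1(x)=2\phi(q^4)\psi(q^8)=2\psi(q^4)^2$. Multiplying by the remaining factor $2\psi(q^4)$ gives $4\psi(q^4)^3$.
\item For $m=2r$, put $\epsilon=(-1)^r$. The duplication identity and its $x\mapsto-x$ companion read $\vartheta_0(x)^2=(P^2+M^2)/2$ and $x\vartheta_1(x)^2=(P^2-M^2)/2$. The sums of $q^{2(J-r)^2}$ over even and odd $J$ are $(P+\epsilon M)/2$ and $(P-\epsilon M)/2$, respectively. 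Hence
\[
 [u^{2r}]X^4=q^{2r^2-2r}\,\frac{(P^2+M^2)(P+\epsilon M)+(P^2-M^2)(P-\epsilon M)}{4}
 =q^{2r^2-2r}\,\frac{P^3+\epsilon M^3}{2}.
\]
\end{itemize}

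The paper instead makes the single four-index substitution that your ``primary route'' leaves unspecified. It uses the Hadamard combinations $p=j_1+j_2-j_3-j_4$, $r'=j_1-j_2+j_3-j_4$, $t=j_1-j_2-j_3+j_4$, for which $\sum_i j_i(2j_i-1)=(m^2+p^2+r'^2+t^2)/2-m$. Both parities are then read off from integrality of the inverse map:
\begin{itemize}
\item for even $m$, the parity of $i+j+k$ after writing $(p,r',t)=2(i,j,k)$;
\item for odd $m$, one of two odd classes of $p+r'+t$ modulo $4$, halved by $p\mapsto-p$.
\end{itemize}
The paper's approach treats both cases uniformly and never needs the duplication identity. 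Your route instead recycles \eqref{eq:S-coefficient} and the classical identities, and avoids the integrality bookkeeping. Your derivations of the three classical identities and of the table (via $m=s+1$, $m=1-s$, and \eqref{eq:mixed-theta-addition}) match the paper's.
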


\begin{proof}
The two sums defining $\psi$ agree: positive $j$ gives
$r=2j-1$, whereas nonpositive $j$ gives $r=-2j$. The first two
identities in \eqref{eq:l1-classical-identities} follow by separating
parity classes after $(r,s)\mapsto(r+s,r-s)$. The last identity is the two-core specialization of the
core--quotient formula; see \eqref{eq:P-def} and~\cite{Andrews,CJW}.

For the quartic formula, fix the sum $m$ of the indices
$j_1,j_2,j_3,j_4$ in $X^4$ and set
\[
 p=j_1+j_2-j_3-j_4,\quad
 r'=j_1-j_2+j_3-j_4,\quad
 t=j_1-j_2-j_3+j_4.
\]
The inverse map is
\[
 \begin{aligned}
  j_1&=(m+p+r'+t)/4,& j_2&=(m+p-r'-t)/4,\\
  j_3&=(m-p+r'-t)/4,& j_4&=(m-p-r'+t)/4.
 \end{aligned}
\]
The four transformed coordinates have the same parity. Under this
condition the inverse is integral if and only if
$m+p+r'+t\equiv0\pmod4$. The change of variables gives
\[
 4\sum_i j_i^2=m^2+p^2+(r')^2+t^2,
 \qquad
 \sum_i j_i(2j_i-1)=\frac{m^2+p^2+(r')^2+t^2}{2}-m.
\]
If $m=2r$, write $(p,r',t)=2(i,j,k)$. The inverse coordinates
are integers precisely when $i+j+k\equiv r\pmod2$.
The remaining theta sum is therefore $(P^3+(-1)^rM^3)/2$,
and the scalar factor is $q^{2r^2-2r}$.
If $m$ is odd, the three transformed coordinates are odd and the
inverse condition is $p+r'+t\equiv-m\pmod4$. This selects one
of the two possible odd residue classes modulo four. Negating $p$ exchanges these classes,
has no fixed points, and preserves the quadratic weight. For an odd coordinate $p$,
$\sum_{p\text{ odd}}q^{(p^2-1)/2}=2\psi(q^4)$.
After extracting $q^{(m^2-2m+3)/2}$ from the norm formula, taking
half of the unrestricted product proves the odd case of
\eqref{eq:quartic-fixed-sum}. All exponents in this calculation
are integers.

Finally, \eqref{eq:mixed-theta-addition} gives
$XY=\psi(q^2)\sum_hu^hq^{h^2}$. Squaring and using
$\psi(x)^2=P T_0$ yields the middle column of
\eqref{eq:l1-hadamard-table}. Applying
\eqref{eq:quartic-fixed-sum} with $m=s+1$ for $U_0$ and $m=1-s$
for $U_4$ yields the other columns. All sums are locally finite,
so the index changes are identities of formal series.
\end{proof}

In the three local proofs, $x=q^2$ and $E=E(x)=D(q)$; this
Euler product is distinct from the lattice kernel $\mathsf E$.
Other auxiliary scalar notation is local to the section or lemma
where it is defined. In particular, $A_0,B_0$ in
Section~\ref{sec:local-L1} are distinct from $\mathsf A,\mathsf B$.
We shall use the following common scalar notation:
\[
 \begin{gathered}
 a=\phi(x^2),\quad b=2\psi(x^4),\quad K=a-xb,\quad L=b-a,\\
 V=K/E(x)+x,\quad G=L/E(x),\quad
 Q_3(x)=\prod_{n\ge3}(1-x^n)^{-1}.
 \end{gathered}
\]
For a scalar series $H(x)\in\Z[[x]]$, write $H_n=[x^n]H(x)$.

\begin{lemma}\label{lem:common-scalar-bounds}
One has
\begin{equation}\label{eq:compensated-basic-bounds}
 V\ge0,\qquad V_n\ge1\ (n\ge2),\qquad G\ge(1+x)Q_3.
\end{equation}
The small coefficients needed below are
\begin{equation}\label{eq:l2-basic-bounds}
 V_0=1,\qquad V_1=0,\qquad V_3=1,
\end{equation}
\begin{equation}\label{eq:l2-g-small}
 G_0=G_1=G_3=1,\quad G_2=0,\quad
 G_4=G_5=3,\quad G_6=5,\quad G_7=7.
\end{equation}
Moreover,
\begin{equation}\label{eq:compensated-partition-lower-bound}
 (Q_3)_n\ge\lfloor n/2\rfloor-1\qquad(n\ge6).
\end{equation}
\end{lemma}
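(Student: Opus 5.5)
The plan is to rewrite $V$ and $G$ in terms of the telescopes already proved in Section~\ref{sec:kernel}, and then read off each claim from the resulting product expansions.

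\textbf{Rewriting $K$ and $L$.} With $x$ in place of $q$, we have $a=\phi(x^2)=\sum_r x^{2r^2}=\vartheta_0(x)$. Also
\[
 b=2\psi(x^4)=2\sum_{r\ge0}x^{2r(r+1)}=\sum_{r\in\Z}x^{2r(r+1)}=\vartheta_1(x).
\]
Hence $K=\vartheta_0-x\vartheta_1$ and $L=\vartheta_1-\vartheta_0$. By \eqref{eq:theta-difference-2} and \eqref{eq:theta-difference-1},
\[
 \frac{K}{E}=\sum_{m\ge0}x^{2m^2}\frac{(1-x^{2m+1})^2}{E},\qquad
 G=\sum_{m\ge1}x^{2m(m-1)}\frac{(1-x^{2m})^2}{E}.
\]
These are exactly the one-factor series treated in the proof of Proposition~\ref{prop:kernel-exact-threshold}.

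\textbf{The bounds on $V$.} The $m=0$ term of $K/E$ is given by \eqref{eq:exceptional-one-factor-square}:
\[
 \frac{(1-x)^2}{E}=1-x+x^2+\sum_{d\ge2}\frac{x^{2d}}{\prod_{k=2}^d(1-x^k)}.
\]
Adding $x$ removes the single $-1$, so this term becomes $1+x^2+\sum_{d\ge2}(\cdots)$. The terms with $m\ge1$ are nonnegative by \eqref{eq:one-factor-large-square}, which gives $V\ge0$.

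For $V_n\ge1$ when $n\ge2$, I would exhibit a positive contribution in each degree. Degree $2$ comes from $x^2$. For $n\ge4$, the $d=2$ summand is $x^4/(1-x^2)$, which covers even $n$. For odd $n$, use the $m=1$ term $x^2(1+x+x^2)/\prod_{k\ne1,3}(1-x^k)$: it contains $x^3$ times the partitions of $n-3$ into parts $\ne1,3$, and $n-3$ is even, so parts $2$ suffice. This also covers $n=3$.

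The small values $V_0=1$, $V_1=0$, $V_3=1$ I will check by direct truncation: expand $K=1-2x+2x^2-2x^4+\cdots$ and multiply by $1/E=1+x+2x^2+3x^3+\cdots$.

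\textbf{The bound on $G$.} The $m=1$ term of $G$ is
\[
 \frac{(1-x^2)^2}{E}=\frac{1+x}{\prod_{k\ne1,2}(1-x^k)}=(1+x)Q_3.
\]
The remaining terms ($m\ge2$) are nonnegative by \eqref{eq:one-factor-large-square}. Together these give $G\ge(1+x)Q_3$.

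For \eqref{eq:l2-g-small}, I will compute $(1+x)Q_3$ through $x^7$. Then add the $m=2$ term
\[
 x^4(1+x+x^2+x^3)/\prod_{k\ne1,4}(1-x^k),
\]
truncated. The $m=3$ term starts at $x^{12}$ and does not contribute. Checking a few of these values mentally: $Q_3=1+x^3+x^4+x^5+2x^6+2x^7+\cdots$, so the $(1+x)Q_3$ part contributes $1,1,0,1,2,2,3,4$ in degrees $0$ through $7$. The $m=2$ term adds $1,1,2,3$ in degrees $4$ through $7$. This gives $G_4=3$, $G_5=3$, $G_6=5$, $G_7=7$, as claimed.

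\textbf{Partitions into parts at least $3$.} For \eqref{eq:compensated-partition-lower-bound}, I would inject explicit partitions. If $n=3i+3j$, the partitions $(3^i,\dots)$ give a family; a cleaner route is to count two-part partitions $(n-k,k)$ with $3\le k\le n/2$. There are $\lfloor n/2\rfloor-2$ of these. Adding the one-part partition $(n)$ yields $\lfloor n/2\rfloor-1$ for all $n\ge6$.

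\textbf{Main obstacle.} The hardest part is bookkeeping rather than any real difficulty. The main care is in the truncated expansions for the small coefficients, where an arithmetic slip is the likeliest failure.
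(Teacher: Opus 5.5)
Your proposal is correct and follows essentially the same route as the paper: the telescopes for $K$ and $L$, the exceptional $d=1$ square repaired by adding $x$, the identification of the $m=1$ summand of $L/E$ with $(1+x)Q_3$, and the one-part-plus-two-part count for $Q_3$. The differences are cosmetic. The paper gets $V_n\ge1$ from the single bound $x^2(1+x+x^2)/\prod_{k\ne1,3}(1-x^k)\ge x^2/(1-x)$ rather than splitting by parity, and it reads off the small coefficients from truncations of $K$ and $L$; note one slip, that $K=1-2x+2x^2-2x^5+\cdots$ has no $x^4$ term, though this does not affect $V_0,V_1,V_3$.
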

\begin{proof}
Since $a=\vartheta_0(x)$ and $b=\vartheta_1(x)$, the telescopes
\eqref{eq:theta-difference-1}--\eqref{eq:theta-difference-2} give
\[
 K=\sum_{m\ge0}x^{2m^2}(1-x^{2m+1})^2,
 \qquad
 L=\sum_{m\ge1}x^{2m(m-1)}(1-x^{2m})^2.
\]
By \eqref{eq:one-factor-large-square} and
\eqref{eq:exceptional-one-factor-square}, the sole negative
coefficient of $K/E$ is $-1$ at $x$, so $V\ge0$.
The $m=1$ summand of $K/E$ is
\[
 x^2\frac{(1-x^3)^2}{E}
 =x^2(1+x+x^2)\prod_{\substack{n\ge1\\ n\ne1,3}}(1-x^n)^{-1}.
\]
Replacing $1+x+x^2$ by $1+x$ and retaining only the size-two
Euler factor gives the coefficientwise lower bound
$x^2(1+x)/(1-x^2)=x^2/(1-x)$. Thus this summand supplies at
least one coefficient at every degree from two onwards. The $m=1$ summand
of $L/E$ is $(1+x)Q_3$, giving the bound for $G$.
The small coefficients follow from
$K=1-2x+2x^2+O(x^5)$ and $L=1-2x^2+2x^4+O(x^8)$.
Finally, $Q_3$ counts partitions with all parts at least three.
For $n\ge6$, counting its one-part partition and its two-part
partitions gives
\begin{equation}\label{eq:l2-q3-bound}
 (Q_3)_n\ge1+\bigl(\lfloor n/2\rfloor-2\bigr)
          =\lfloor n/2\rfloor-1.
\end{equation}
\end{proof}

\section{A sharp compensated one-factor kernel}
\label{subsec:compensated-one-factor}

The difference $\Theta^{(2)}-q\Theta^{(0)}$ can be compensated
by two even-pair contributions. Two is the least possible coefficient
in the family below.

\begin{proposition}\label{prop:two-slot-compensation}
For every integer $c\ge0$, put
\[
 \mathcal C_c(u,q)=
 \frac{\Theta^{(2)}-q\Theta^{(0)}
       +c q^6(1-q^2)\Theta^{(2)}}{D(q)}.
\]
Then $\mathcal C_c$ has nonnegative Laurent coefficients if and only
if $c\ge2$.
\end{proposition}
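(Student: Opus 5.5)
The plan is to reduce first to the single value $c=2$ and then to three BG indices. Since $\mathsf E=(1-q^2)\Theta^{(2)}/D\ge0$ by the one-factor results, one has $\mathcal C_c=\mathcal C_2+(c-2)q^6\mathsf E$ for $c\ge2$. So sufficiency follows from $\mathcal C_2\ge0$, and necessity follows from exhibiting one negative coefficient of $\mathcal C_1$; then $\mathcal C_0=\mathcal C_1-q^6\mathsf E$ is negative at the same place. The numerator of $\mathcal C_c$ is a linear combination $c_2(q)U_{5,2}+c_0(q)U_{5,0}$ with $c_k\in\Z[[q]]$, and $\Theta^{(k)}=U_{5,k}$. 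Lemma~\ref{lem:common-theta-index-transformations} therefore applies after multiplying by the unit $D^{-1}$, and it suffices to prove nonnegativity of $[u^J]\mathcal C_2$ for $J\in\{-1,0,1\}$.

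For each such $J$ I would factor $\Theta^{(0)}=X\cdot U_0$ and $\Theta^{(2)}=X\cdot U_2$, with $X=S(u,q)$, and expand
\[
 [u^J]\Theta^{(k)}=\sum_{h\in\Z}q^{h(2h-1)}[u^{J-h}]U_k .
\]
I would then insert \eqref{eq:quartic-fixed-sum} for $U_0$ and the formula $XY=\psi(q^2)\mathcal T$ for $U_2$. Grouping $h$ by parity turns each slice into finitely many theta products in $x=q^2$, built from $P,M,T_0$. These should collapse, via \eqref{eq:l1-classical-identities}, to expressions in $a=\phi(x^2)$ and $b=2\psi(x^4)$ times quadratic monomials in $h$.

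The key structural step is to arrange the difference $[u^J](\Theta^{(2)}-q\Theta^{(0)})$ so that the leading parts appear as $K=a-xb$ and $L=b-a$, times manifestly nonnegative theta series. After dividing by $E$, these become $V-x$ and $G$. Lemma~\ref{lem:common-scalar-bounds} makes everything nonnegative except finitely many explicit negative coefficients: the $-x$ in $V-x$, and sign-indefinite tails where a theta product is multiplied by $-q$.

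The main obstacle is controlling these negative contributions simultaneously. At each $J$, I expect the negative terms to be a sparse family indexed by the theta exponents, each of size at most a small constant. They must be absorbed by the positive mass of the compensator
\[
 2q^6(1-q^2)[u^J]\Theta^{(2)}/D=2q^6\bigl([u^J]\Theta^{(2)}\bigr)(1-x)/E ,
\]
together with the positive parts $V_n\ge1$ for $n\ge2$ and $G\ge(1+x)Q_3$. I would organize this with Lemma~\ref{lem:coefficient-allocation}: list the negative labels by weight $(q\text{-degree},J)$, and map each to a positive label from the compensator or from the $G$ and $V$ terms at the same weight. The small values \eqref{eq:l2-basic-bounds}--\eqref{eq:l2-g-small} would handle the low degrees, and the growth bound $(Q_3)_n\ge\lfloor n/2\rfloor-1$ would cover degrees where several negative theta terms share a target. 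I would first try to show that each negative source at degree $n$ is dominated by a single $G$ or $V$ coefficient shifted by a quadratic exponent. Only the genuinely colliding low-degree weights would be checked by hand or by the finite rational certificates of Appendix~\ref{app:l1-certificates}.

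For necessity I would compute low-order coefficients of $\mathcal C_1$ at $J\in\{-1,0,1\}$ using $S=1+uq+u^{-1}q^3+u^2q^6+O(q^{10})$ and $D^{-1}=1+q^2+2q^4+3q^6+O(q^8)$. The aim is a degree, presumably near $q^8$, where $(\Theta^{(2)}-q\Theta^{(0)})/D$ has a coefficient $-2$ that a single copy of the compensator raises only to $-1$. This is the analogue of the $-1$ at $[u^{-1}q^2]\mathsf A$, now one orientation reversed.
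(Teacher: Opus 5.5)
\textbf{Verdict: the outline is right, but the sufficiency direction has a genuine gap.} Your reductions are sound and match the paper. Since $\mathsf E\ge0$, sufficiency reduces to $c=2$, and Lemma~\ref{lem:common-theta-index-transformations} reduces it further to $J\in\{-1,0,1\}$. Your $\mathcal C_1$-then-$\mathcal C_0$ route to necessity is also valid. The paper does necessity in one line: $[u^0q^6](\Theta^{(2)}-q\Theta^{(0)})/D=-2$, and each unit of $c$ adds $[u^0q^0]\Theta^{(2)}=1$, so $[u^0q^6]\mathcal C_c=c-2$. The witness is therefore at $q^6$, not near $q^8$.

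\textbf{Where the gap is.} The step you call the main obstacle is the whole of the sufficiency proof, and your plan does not contain the mechanism that resolves it. The paper writes $\mathcal C_2=X\mathcal R$ with
\[
 \mathcal R=\frac{(1+2q^6-2q^8)X^2Y^2-qu^{-1}X^4}{D}.
\]
This folds the compensator into the quartic kernel instead of treating it as separate positive mass. The quartic table then gives three scalar slices $f_e,f_+,f_-$, and one must prove the following.
\begin{enumerate}
\item $f_e=T\bigl(aV+xZ+2x^3(W-1)\bigr)\ge0$, where $Z=bG-a+2x^2$ and $W=(1-x)P^2/E$. The compensator is what supplies the repair term $2x^2$ in $Z$.
\item The capacity bounds $[x]f_e=1$ and $[x^n]f_e\ge2$ for $n\ge2$. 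These come from $V_n+V_{n-2}$, and from $V_3+Z_0$ at $n=3$; a single $V$ coefficient does not suffice.
\item $f_+\ge0$ and $f_-+2x^2\ge0$. These rest on the dominations $\beta aG\ge\alpha b$ and $\alpha aG+2x^2\ge x^2\beta b$. They are proved by comparing the $O(\sqrt m)$ theta representation counts with $(Q_3)_n\ge\lfloor n/2\rfloor-1$. This, not collisions between sources, is where the $Q_3$ bound is needed.
\end{enumerate}
Together these show that the negative part of $\mathcal R$ is exactly $-2\sum_h u^{4h-1}q^{((4h-1)^2+9)/2}$. That is an infinite family of sources, each of multiplicity two, not a finite list.

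\textbf{The allocation.} Only after this can the allocation be set up, and it is cross-slice. A source with inner index $s=4h-1$ and outer index $r$ is sent as follows:
\begin{itemize}
\item if $h\le0$, to the even slice $t=s+1$ with outer index $r-1$;
\item if $h\ge1$, to $t=s-1$ with outer index $r+1$.
\end{itemize}
The extra degree satisfies $e/2\ge2$ except at $(h,j)=(0,-1)$. There $[x]f_e=1$, so the two labels must be split between $(t,r')=(0,-1)$ and $(-2,1)$. Injectivity holds because the target families occupy disjoint classes of $t$ modulo $4$.

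None of this is determined by your outline. Also, the rational certificates of Appendix~\ref{app:l1-certificates} that you propose to use belong to Proposition~\ref{prop:local-L1} and play no role here. As written, your sufficiency direction is a strategy rather than a proof.
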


\begin{proof}
Write $X=S(u,q)$, $Y=S(u^{-1},q)$, and $x=q^2$.
Since $(1-q^2)\Theta^{(2)}/D\ge0$, it suffices to prove
$\mathcal C_2\ge0$ and to exclude $c<2$.
Factor $\mathcal C_2=X\mathcal R$, where
\[
 \mathcal R=
 \frac{(1+2q^6-2q^8)X^2Y^2-q u^{-1}X^4}{D}.
\]
We first determine the negative coefficients of $\mathcal R$, and
then give an injection that compensates them after multiplication
by $X$.

Retain the notation of Lemma~\ref{lem:common-scalar-bounds}, and put
$P=\phi(x)$, $M=\phi(-x)$, and $T=\psi(x^2)$. The classical quartic slice table \eqref{eq:l1-hadamard-table} in
Lemma~\ref{lem:quartic-slices}, together with the index transformations
of Lemma~\ref{lem:common-theta-index-transformations}, gives
\begin{equation}\label{eq:compensated-quartic-slices}
 [u^s]\mathcal R=
 \begin{cases}
 q^{s^2/2}f_e(x),&s\text{ even},\\
 q^{(s^2+1)/2}f_+(x),&s\equiv1\pmod4,\\
 q^{(s^2+1)/2}f_-(x),&s\equiv-1\pmod4,
 \end{cases}
\end{equation}
where
\begin{align}
 f_e&=\frac{T\bigl((1+2x^3-2x^4)P^2-4xT^2\bigr)}{E},
       \label{eq:compensated-fe}\\
 f_\pm&=\frac{2(1+2x^3-2x^4)PT^2
                 -(P^3\mp M^3)/2}{E}.
       \label{eq:compensated-fpm}
\end{align}
We use the scalar bounds in Lemma~\ref{lem:common-scalar-bounds}.

The classical identities $P^2=a^2+xb^2$ and $T^2=ab/2$ yield
an explicit positive decomposition of the even slice. Define
\[
 Z=bG-a+2x^2,\qquad W=(1-x)P^2/E.
\]
Then $Z\ge0$, $Z_0=1$, and $W\ge1$. Indeed, the only subtracted
coefficients of $a$ are one at degree zero and two at degrees
$2r^2$, $r\ge1$. Degree zero is covered by $b_0G_0=2$;
degree two is repaired by $2x^2$; and at degrees $2r^2\ge8$,
\eqref{eq:compensated-basic-bounds} gives
$bG\ge2(1+x)\prod_{k\ge3}(1-x^k)^{-1}$, whose coefficient is
at least four. Canceling the size-one Euler factor gives $W\ge1$. Direct expansion now gives
\begin{equation}\label{eq:compensated-even-positive}
 f_e=T\bigl(aV+xZ+2x^3(W-1)\bigr)\ge0.
\end{equation}
In particular,
\begin{equation}\label{eq:compensated-capacity}
 [x]f_e=1,\qquad [x^n]f_e\ge2\quad(n\ge2).
\end{equation}
The series $a$ and $T$ are even, $V_1=0$, and $Z_0=1$, so
$[x]f_e=1$. Since $[x^0]a=[x^0]T=[x^2]T=1$, the term $TaV$
contributes at least $V_2+V_0\ge2$ at degree two. At degree three,
$TaV$ and $xTZ$ contribute at least $V_3$ and $Z_0$, respectively.
For $n\ge4$, $TaV$ contributes at least $V_n+V_{n-2}\ge2$.

For the odd slices set $\alpha=\phi(x^4)$ and
$\beta=2\psi(x^8)$. Separating the even and odd theta indices gives
\[
 P=\alpha+x\beta,\quad M=\alpha-x\beta,
 \quad a^2=\alpha^2+x^2\beta^2,\quad b^2=2\alpha\beta.
\]
Put $C=2x^3(1-x)Pab/E\ge0$. Equation
\eqref{eq:compensated-fpm} becomes
\begin{align}
 f_+&=\alpha bV+x(\beta aG-\alpha b)+C,
          \label{eq:compensated-fplus-positive}\\
 f_-+2x^2&=x\beta bV+(\alpha aG-x^2\beta b+2x^2)+C.
          \label{eq:compensated-fminus-positive}
\end{align}
To prove that both parenthesized series are nonnegative, note that $\alpha b=2\phi(x^4)\psi(x^4)$
has support at $4m$ and coefficient at most
$2(2\lfloor\sqrt m\rfloor+1)$ there. Each integer square in a
representation determines at most one triangular number.
Since $\beta a\ge2$, for $m\ge2$ we have
\[
 G_{4m}\ge (Q_3)_{4m}+(Q_3)_{4m-1}
 \ge4m-3\ge2\lfloor\sqrt m\rfloor+1.
\]
Here $\lfloor\sqrt m\rfloor\le m-1$ for $m\ge2$.
For $m=0,1$, the required bounds are the equalities
$2G_0=2$ and $2G_4=6$. This proves $\beta aG-\alpha b\ge0$.

Similarly, $\beta b=4\psi(x^8)\psi(x^4)$ has coefficient at
$4m$ at most $4(\lfloor\sqrt m\rfloor+1)$, by counting
solutions of $r(r+1)+s(s+1)/2=m$ with $r,s\ge0$.
For $m\ge3$, \eqref{eq:compensated-partition-lower-bound} gives
\[
 G_{4m+2}\ge4m-1\ge4(\lfloor\sqrt m\rfloor+1).
\]
The last inequality uses $\lfloor\sqrt m\rfloor\le m-2$ for $m\ge3$.
The remaining cases are $m=0,1,2$. At $m=0$,
$[x^2]\alpha aG=2$, and the added $2x^2$ covers
$(\beta b)_0=4$. At $m=1$, use $G_6=5\ge(\beta b)_4=4$;
at $m=2$, use $G_{10}\ge7\ge(\beta b)_8=4$.
Thus $\alpha aG-x^2\beta b+2x^2\ge0$.
Equations \eqref{eq:compensated-fplus-positive} and
\eqref{eq:compensated-fminus-positive} prove
$f_+\ge0$ and $f_-+2x^2\ge0$. The coefficient at $x^2$ in the
latter series is zero, since $V_1=0$ and $C$ starts at degree
three. Consequently the complete negative part of $\mathcal R$ is
\begin{equation}\label{eq:compensated-negative-part}
 -2\sum_{h\in\Z}u^{4h-1}q^{((4h-1)^2+9)/2}.
\end{equation}

It remains to compensate \eqref{eq:compensated-negative-part}
after multiplication by $X$. The series $\mathcal C_2$ is a scalar
linear combination of $U_{5,0}$ and $U_{5,2}$.
Lemma~\ref{lem:common-theta-index-transformations} therefore reduces
positivity to the total BG values $j=-1,0,1$.
Fix such a $j$. Write
$f(r)=r(2r-1)$, $s=4h-1$, and $r=j-s$.
A negative source has multiplicity two and degree
$f(r)+(s^2+9)/2$. For $h\le0$, assign it to the even inner BG
value $t=s+1=4h$ and outer theta index $r-1$; the required extra
$q$-degree in its even slice is
\[
 e=6+4j-20h.
\]
For $h\ge1$, use $t=s-1=4h-2$ and outer index $r+1$;
now $e=20h-2-4j$. Both equations follow by subtracting the
displayed quadratic degrees. Except when $h=0,j=-1$, one has
$e/2\ge2$, so \eqref{eq:compensated-capacity} supplies two
distinct positive labels. In the exception, the source degree
is five. Use one label at $(t,r')=(0,-1)$ and one at
$(t,r')=(-2,1)$, both with $e=2$.
The two target coefficients equal $[x]f_e=1$ and have the correct total degree.

This assignment is injective on labels. The first
ordinary family uses nonpositive multiples of four for $t$;
the second uses positive values congruent to two modulo four.
Within each family, $t$ determines $h$. The exceptional additional
value $t=-2$ belongs to neither family. Thus no positive label is
used twice, and every negative source is covered. Quadratic
starting degrees ensure local finiteness, so
Lemma~\ref{lem:coefficient-allocation} proves
$X\mathcal R=\mathcal C_2\ge0$.

Finally,
\[
 [u^0q^6]\frac{\Theta^{(2)}-q\Theta^{(0)}}{D}=-2.
\]
For an explicit check, the numerator's coefficients at
$u^0q^0,u^0q^2,u^0q^4,u^0q^6$ are $1,1,11,-18$.
The last value is $12-30$: the first contribution is the
$\Theta^{(2)}$ coefficient already computed above, and the second
chooses one $u^{-1}q^3$ term and two $uq$ terms among the five
factors of $X$. The coefficients of $D^{-1}$ at these four
$q$-degrees are $1,1,2,3$, giving $-18+11+2+3=-2$.
The added term $c q^6(1-q^2)\Theta^{(2)}/D$ contributes $c$.
Hence $[u^0q^6]\mathcal C_c=c-2$, excluding $c<2$.
\end{proof}

\section{The first local compensation inequality}\label{sec:local-L1}

Write $X=S(u,q)$, $Y=S(u^{-1},q)$ and $D=(q^2;q^2)_\infty$.

\begin{proposition}\label{prop:local-L1}
One has
\begin{equation}\label{eq:l1-target}
 \frac{X^3Y^2-quXY^4+q^3(u^{-1}X^5-qX^3Y^2)}{D}
 \in\Nzero[u,u^{-1}][[q]]
\end{equation}
\end{proposition}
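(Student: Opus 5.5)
The target is $\mathsf B+q^3\mathsf A$ written out: $\Theta^{(2)}=X^3Y^2$, $\Theta^{(4)}=uXY^4$, and $\Theta^{(0)}=u^{-1}X^5$. We follow the template of Proposition~\ref{prop:two-slot-compensation}. First we factor out $X$, writing the series as $X\mathcal R_1$ with
\[
 \mathcal R_1=\frac{(1-q^4)X^2Y^2-quY^4+q^3u^{-1}X^4}{D}.
\]
The numerator is a combination of $U_0,U_2,U_4$ from Lemma~\ref{lem:quartic-slices}. So the Hadamard table \eqref{eq:l1-hadamard-table} and \eqref{eq:quartic-fixed-sum}, together with the reflections and translations of Lemma~\ref{lem:common-theta-index-transformations} (with $d=4$), give every slice $[u^s]\mathcal R_1$ as $q^{\text{quadratic in }s}$ times a scalar series in $x=q^2$. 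These scalar series are built from $P,M,T=\psi(x^2)$ and $E$. There should again be three classes: $s$ even, $s\equiv1$, and $s\equiv-1\pmod4$. The factor $(1-q^4)/D$ cancels the Euler factor at size two, which is where the extra positivity comes from.

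For each slice we will convert to $a=\phi(x^2)$, $b=2\psi(x^4)$, $\alpha,\beta$, using $P^2=a^2+xb^2$, $T^2=ab/2$, $P=\alpha+x\beta$, $M=\alpha-x\beta$. We then look for an explicit decomposition into terms such as $aV$, $bG$, $\alpha bV$, $\beta aG$, plus correction differences. The corrections should be controlled by the bounds of Lemma~\ref{lem:common-scalar-bounds}: $V\ge0$, $V_n\ge1$ for $n\ge2$, $G\ge(1+x)Q_3$, and $(Q_3)_n\ge\lfloor n/2\rfloor-1$, compared against the sparse theta coefficients of $\alpha b$ and $\beta b$, which are at most $O(\sqrt m)$. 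The aim is to show that each slice is nonnegative except for an explicitly listed sparse family of negative coefficients, as in \eqref{eq:compensated-negative-part}. We expect the negative part to be a small multiple of $u^{s}q^{\text{quadratic}}$ along one residue class of $s$ modulo $4$. If the $U_0$ contribution $q^3(P^3\mp M^3)/2$ does not absorb the $-quY^4$ term at low degree, the negatives should come from the $s\equiv\pm1$ slices at relative degree $x$.

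Next we multiply back by $X$. The full series is a combination of $U_{5,0},U_{5,2},U_{5,4}$, so by Lemma~\ref{lem:common-theta-index-transformations} it suffices to check total BG values $j\in\{-1,0,1\}$. For each negative source (inner index $s$, outer theta index $r=j-s$) we assign target labels in a neighbouring slice: an even inner value $t=s\pm1$ with outer index $r\mp1$. We then compute the extra degree $e$, which will be linear in $h$ and $j$, and check that the capacity of the target slice there, analogous to \eqref{eq:compensated-capacity}, is at least the multiplicity. Injectivity follows because $t$ determines $h$ within each family and the families occupy disjoint residue classes. Finitely many small exceptions are treated separately with explicit labels. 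Lemma~\ref{lem:coefficient-allocation} then completes the proof.

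The main obstacle is the second step: finding the positive decomposition of the odd slices and bounding the low-degree coefficients. The required inequalities, for example $G_{4m}\ge$ (theta count), hold only from a modest $m$ onward. The first few $m$ need the exact values \eqref{eq:l2-basic-bounds}--\eqref{eq:l2-g-small}, and possibly the rational certificates of Appendix~\ref{app:l1-certificates}, notably the product identity \eqref{eq:l1-pair-absorption}, to absorb the $-q^4X^3Y^2$ pair term. We will first try treating $(1-q^4)X^2Y^2/D$ via $XY=\psi(q^2)\mathcal T$ and \eqref{eq:psi-euler-specialization}, giving $\mathcal T^2(1-x^2)/O(x)^2$, which is manifestly positive. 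That positive mass would then be used to dominate $quY^4-q^3u^{-1}X^4$ slice by slice.
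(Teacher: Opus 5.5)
\textbf{There is a genuine gap.} Your architecture matches the paper's. You factor out $X$, reduce $[u^s]\mathcal R_1$ to three scalar slices with the $d=4$ index maps, and isolate a sparse negative part. You then reallocate after multiplying by $X$ at $j\in\{-1,0,1\}$ via Lemma~\ref{lem:coefficient-allocation}. However, the one concrete prediction you make is wrong, and it reverses the direction of your compensation. The odd slices have no negative coefficients. With $P,M,T_0$ as in Lemma~\ref{lem:quartic-slices} one finds $g_+=1+5x+2x^2+\cdots$ and $g_-=2+x+5x^2+\cdots$; the paper shows every coefficient of $g_\pm$ is at least one. The defect lies in the even slice,
\[
 R_0=\frac{T_0\{(1+x)P^2-4xT_0^2\}}{\prod_{k\ge2}(1-x^k)}=1+x+10x^2-x^3+O(x^4),
\]
so the complete negative part of $\mathcal R_1$ is $-u^{2h}q^{2h^2+6}$, $h\in\Z$. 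The even slice therefore cannot be your capacity reservoir. The sources have even inner index $s=2h$ and must go to odd targets $t=2h\pm1$, drawing on the capacities of $g_\pm$. Even then the assignment is not uniform. For $(j,h)=(-1,0)$ the choice $t=2h+1$ would need extra degree $e=2+4j-10h=-2$. The fallback $t=-1$, $e=8$ collides with the target of $(j,h)=(-1,-1)$, so you need at least two labels at $[x^4]g_-$.

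More importantly, the two slice estimates carry all the difficulty, and your proposal does not supply them. For the even slice you must prove that $-x^3$ is its only negative coefficient. The paper writes $R_0=\mathcal V(a^2+xL^2+x^2b^2)$, where $\mathcal V$ has odd sizes $\ge3$ once and sizes $\equiv2\pmod4$ twice. It expands $L^2$ as a double telescope and absorbs each product $t_mt_n$ into distinct Euler factors of $\mathcal V$, except when $m,n\in\{1,2,4\}$. The finite remainder is handled by the rational certificate of Appendix~\ref{app:l1-certificates}. The remaining Euler factors are then restored with Lemma~\ref{lem:euler-tail-allocation}, so no new negatives appear. For the odd slices the paper uses the dissections \eqref{eq:l1-oddplus-scalar}--\eqref{eq:l1-oddminus-scalar}, the table absorbing pairs of squares, and five further certificates to get a lower bound of one in every degree.

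The single-telescope bounds for $V,G,Q_3$ in Lemma~\ref{lem:common-scalar-bounds} do not by themselves control a product of two telescopes over the thinner denominator $\mathcal V$. Your closing idea also misses the point. The series $(1-q^4)X^2Y^2/D=(1-x^2)\psi(x)\mathcal T^2/O(x)^2$ is positive (note the factor $\psi(x)$ you dropped). But it cannot dominate $quY^4-q^3u^{-1}X^4$ slice by slice, since that comparison fails exactly at $u^{2h}q^{2h^2+6}$.
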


Put
\begin{equation}\label{eq:l1-four-kernel}
 R_1(u,q)=\frac{(1-q^4)X^2Y^2+q^3u^{-1}X^4-quY^4}{D}.
\end{equation}
The series in \eqref{eq:l1-target} is $XR_1$.
We determine the negative coefficients of $R_1$ and cancel them
against positive coefficients after multiplication by $X$.
The assignment preserves both total degree and BG-rank.

\subsection{The three scalar slices}
Put $x=q^2$, $E=(x;x)_\infty$, $P=\phi(x)$,
$M=\phi(-x)$, and $T_0=\psi(x^2)$, with the notation of
Lemma~\ref{lem:quartic-slices}. We use the common table
\eqref{eq:l1-hadamard-table} and the index transformations of
Lemma~\ref{lem:common-theta-index-transformations}.
Hence there are scalar series $R_0,g_+,g_-$ such that
\begin{equation}\label{eq:l1-slice-reduction}
 [u^s]R_1=
 \begin{cases}
 q^{s^2/2}R_0(x),&s\text{ even},\\
 q^{(s^2+1)/2}g_+(x),&s\equiv1\pmod4,\\
 q^{(s^2+1)/2}g_-(x),&s\equiv-1\pmod4.
 \end{cases}
\end{equation}
Introduce the nonnegative theta series
\[
 a=\phi(x^2),\quad b=2\psi(x^4),\quad
 A_0=\phi(x^4),\quad B_0=2\psi(x^8)
\]
and their differences
\[
 K=a-xb,\quad L=b-a,\quad
 K_2=A_0-x^2B_0,\quad L_2=B_0-A_0.
\]
In particular $P=A_0+xB_0$, $a^2=A_0^2+x^2B_0^2$,
$b^2=2A_0B_0$, and $T_0^2=ab/2$.
Substitution in \eqref{eq:l1-hadamard-table} gives
\begin{align}
 R_0&=\frac{T_0\{(1+x)P^2-4xT_0^2\}}
                   {\prod_{k\ge2}(1-x^k)},\label{eq:l1-even-scalar}\\
 g_+&=\frac{aK_2L+xbL_2K+x(1-x)Pab}{E},\label{eq:l1-oddplus-scalar}\\
 g_-&=\frac{xaL_2L+bK_2K+x(1-x)Pab}{E}.\label{eq:l1-oddminus-scalar}
\end{align}
For example the numerators of $g_\pm$ before dissection are
$\{4(1-x^2)PT_0^2+(x-1)P^3\mp(x+1)M^3\}/2$.
Thus the last two identities also follow by direct substitution of the
displayed duplications and $M=A_0-xB_0$.

\subsection{The even slice and its unique negative coefficient}

We need an elementary observation about multiplication by an Euler
product. Its hypothesis combines all exceptional degrees, so each
positive coefficient is required to cover their total contribution.

\begin{lemma}[Absorbing finitely many defects into an Euler tail]
\label{lem:euler-tail-allocation}
Let $b\ge1$, let $(c_k)_{k\ge b}$ be nonnegative integers, and put
\[
 C(x)=\sum_{k\ge b}c_kx^k,\qquad
 W(x)=\prod_{k\ge b}(1-x^k)^{-c_k}.
\]
Let $P\in\Nzero[x]$ and $A_+\in\Nzero[[x]]$ satisfy
$A_+\ge PC$ coefficientwise. Then
\begin{equation}\label{eq:euler-tail-positive-decomposition}
 (A_+-P)W+P
  =(A_+-PC)W+P\{CW-W+1\}\ge0.
\end{equation}
If $[x^d]P=0$ for $d\ge b$ and $[x^d]A_+=0$ whenever
$[x^d]P>0$, the negative coefficients of $(A_+-P)W$ are exactly
$-[x^d]P$ at those degrees $d$ for which $[x^d]P>0$.
This includes $P=0$.
\end{lemma}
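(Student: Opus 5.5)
The plan is to first verify the displayed identity \eqref{eq:euler-tail-positive-decomposition} algebraically, then prove the nonnegativity of each summand on its right side, and finally read off the negative coefficients of $(A_+-P)W$.

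For the identity, expand the right side:
\[
 (A_+-PC)W+P(CW-W+1)=A_+W-PCW+PCW-PW+P=(A_+-P)W+P.
\]
All products are legitimate in $\Z[[x]]$, since $W$ is a unit with constant term one and $C$ has no constant term because $b\ge1$.

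For nonnegativity, the first summand is a product of $A_+-PC\ge0$ (the hypothesis) and $W\ge0$, since $W$ is a product of geometric series with nonnegative exponents. For the second summand it suffices, since $P\in\Nzero[x]$, to show $CW-W+1\ge0$. I would prove this through the logarithmic-derivative-free identity
\[
 W-1=\sum_{k\ge b}\bigl((1-x^k)^{-c_k}-1\bigr)\prod_{b\le k'<k}(1-x^{k'})^{-c_{k'}},
\]
which telescopes in the formal ring because $W$ is a locally finite product. Each factor $(1-x^k)^{-c_k}-1$ is bounded coefficientwise by $c_kx^k(1-x^k)^{-c_k}$, because $(1-y)^{-c}-1=\sum_{i=0}^{c-1}y(1-y)^{-i-1}\le cy(1-y)^{-c}$. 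Multiplying by the remaining nonnegative partial product and by $\prod_{k'>k}(1-x^{k'})^{-c_{k'}}\ge1$ then gives $W-1\le\sum_k c_kx^kW=CW$ coefficientwise. Combinatorially, this says that a nonempty multicolored partition with parts $\ge b$ maps injectively, by removing one part of its largest size $k$ together with its color label, to a pair consisting of a labelled part and a colored partition; this injection gives the same bound. Hence $CW-W+1\ge0$ and the whole right side is nonnegative.

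For the final assertion, \eqref{eq:euler-tail-positive-decomposition} gives $(A_+-P)W\ge-P$ coefficientwise, so negative coefficients can occur only at degrees $d<b$ with $[x^d]P>0$, and there the coefficient is at least $-[x^d]P$. For exactness at such $d<b$, note that $W\equiv1$ modulo $x^b$. Hence $[x^d](A_+-P)W=[x^d](A_+-P)=-[x^d]P$, because $[x^d]A_+=0$ there. When $P=0$, the statement says the series is nonnegative, which is the case already covered.

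The main obstacle is the bound $W-1\le CW$; I would check the elementary inequality for $(1-y)^{-c}-1$ carefully, for instance by induction on $c$.
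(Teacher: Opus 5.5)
Your proof is correct and has the same structure as the paper's: verify the identity by expansion, show $CW-W+1\ge0$ so that both summands are nonnegative, and use $W\equiv1$ modulo $x^b$ to identify the negative coefficients below degree $b$. The one difference is how you prove $W-1\le CW$: your main argument telescopes the product and uses $(1-y)^{-c}-1\le cy(1-y)^{-c}$, whereas the paper reads $[x^n]CW$ as colored partitions with a marked part type present, so a partition with $r$ distinct types contributes $r-1\ge0$ to $CW-W+1$; this is essentially your parenthetical injection.
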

\begin{proof}
The coefficient of $x^n$ in $W$ counts partitions of $n$ with $c_k$
colors available at size $k$. A \emph{colored part type} specifies
both the size and the color, not its multiplicity. The coefficient
of $x^n$ in $CW$ counts such a partition together with a marked
colored part type present in it: remove one occurrence of that type
to obtain the $W$ term, and use the removed type as the $C$ term.
Thus a nonempty colored partition with $r$ distinct colored part
types contributes $r-1\ge0$ to $CW-W+1$; the empty partition
contributes zero. In particular $CW-W+1\in\Nzero[[x]]$.

Expanding the right side of
\eqref{eq:euler-tail-positive-decomposition} proves the identity.
Both terms there are nonnegative by the capacity assumption.
All products are defined degree by degree, since $C$ has positive
order and there are finitely many part sizes and colors at any
fixed degree. Finally $W=1+O(x^b)$. Hence below degree $b$ the
coefficients of $(A_+-P)W$ equal those of $A_+-P$. This proves the
specified negative coefficients, and the first assertion excludes
negative coefficients outside the support of $P$.
\end{proof}

\begin{remark}\label{rem:euler-tail-single-defect}
For a single defect $P=mx^d$, the capacity condition is exactly
$[x^{d+k}]A_+\ge mc_k$ for every $k\ge b$. For several defects it
is the stronger, aggregated condition
\[
 [x^n]A_+\ge\sum_{d\ge0}[x^d]P\,c_{n-d},
 \qquad c_r=0\quad(r<b).
\]
Separate lower bounds for each summand would not suffice: they
could use the same coefficient capacity more than once. In the
application below there is only one defect, $P=x^3$.
\end{remark}

\begin{lemma}\label{lem:l1-even-spectrum}
The only negative coefficient of $R_0$ is $[x^3]R_0=-1$.
\end{lemma}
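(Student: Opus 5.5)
The plan is to find an exact nonnegative decomposition of the numerator, isolate its finitely many defects, and then absorb them into the Euler denominator with Lemma~\ref{lem:euler-tail-allocation}. First I simplify the numerator of \eqref{eq:l1-even-scalar}. Substituting $P^2=a^2+xb^2$ and $4T_0^2=2ab$ gives
\[
 (1+x)P^2-4xT_0^2=a^2-2xab+x^2b^2+x(a^2+b^2)=K^2+x(a^2+b^2),
\]
where $K=a-xb$. Hence
\[
 R_0=\frac{T_0\bigl(K^2+x(a^2+b^2)\bigr)}{\prod_{k\ge2}(1-x^k)}.
\]
The denominator is exactly $W(x)=\prod_{k\ge2}(1-x^k)^{-1}$, that is, the setting of Lemma~\ref{lem:euler-tail-allocation} with $b=2$, $c_k=1$ and $C=x^2/(1-x)$.

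Second, I analyse $N:=T_0\bigl(K^2+x(a^2+b^2)\bigr)$ and write it as $A_+-P$, with $P$ a finite defect polynomial. The series $a,b,T_0$ have nonnegative coefficients. The only signed factor is $K=\sum_{m\ge0}x^{2m^2}(1-x^{2m+1})^2$, which gives $K=1-2x+2x^2-2x^5+\cdots$. Negative coefficients of $K^2$ arise from cross terms $-2x^{2m+1}$ against positive terms. I would bound their total mass at degree $n$ by counting representations of $n$ through the quadratic exponents, roughly $O(\sqrt n)$ per degree. Against this I compare $x(a^2+b^2)$. Since $a^2=\sum x^{2(r^2+s^2)}$ and $b^2=4\sum x^{2(r(r+1)+s(s+1))}$, this term covers the odd degrees with multiplicities given by sums of two squares or two triangular numbers. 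The factor $T_0=\psi(x^2)$ then spreads the coverage over neighbouring even degrees. I expect the comparison to leave only a finite defect in low degrees, namely $P=mx^3$ with $[x^3]A_+=0$. I would confirm this directly from the expansions of $a,b,T_0,K$ through degree about $8$.

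Third, I apply Lemma~\ref{lem:euler-tail-allocation}. The capacity condition of Remark~\ref{rem:euler-tail-single-defect} becomes $[x^{3+k}]A_+\ge m$ for every $k\ge2$, that is, $[x^n]A_+\ge m$ for all $n\ge5$. I would check the degrees up to about $10$ by hand. Beyond that, the term $T_0\,x(a^2+b^2)$ together with the positive part of $T_0K^2$ gives a lower bound that exceeds $m$. The same lemma then shows that the only negative coefficient of $R_0$ is $-[x^3]P$. Since $W=1+O(x^2)$, this coefficient equals $[x^3]N+[x]N$, and I would verify that it is $-1$.

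The main obstacle is the second step. I must show that the negative cross terms of $K^2$, after multiplication by $T_0$, are dominated by $T_0\,x(a^2+b^2)$ at every degree except $3$. This needs both the odd-degree contributions from $x(a^2+b^2)$ and the even-degree contributions of $T_0$ acting on the positive part of $K^2$. I would first try an explicit injection of negative labels into positive labels of the same degree, in the spirit of Lemma~\ref{lem:coefficient-allocation}. If that fails, I would fall back on rearranging $K^2$ through the telescope, as in Lemma~\ref{lem:common-scalar-bounds}, so that the sign defect is concentrated in the single exceptional square $(1-x)^2$.
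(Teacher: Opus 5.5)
\textbf{There is a genuine gap: your second step fails.} Your identity $(1+x)P^2-4xT_0^2=K^2+x(a^2+b^2)$ is correct. However, the numerator $N=T_0\bigl(K^2+x(a^2+b^2)\bigr)$ is not of the form $A_+-mx^3$.

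\emph{Why it fails.} Since $a$, $b$ and $T_0$ are even series, the odd part of $K^2$ is $-2xab$. Hence the odd part of $N$ is exactly $xT_0(b-a)^2=xT_0L^2$, where $L=1-2x^2+2x^4-2x^8+2x^{12}-\cdots$. From $xL^2=x-4x^3+8x^5-8x^7+0\cdot x^9+8x^{11}-4x^{13}-8x^{15}+\cdots$ and $T_0=1+x^2+x^6+x^{12}+\cdots$ one gets $[x^9]N=0-8-4=-12$ and $[x^{15}]N=-8-4+0-4=-16$. Negative coefficients keep recurring, for instance $-3,-4,-11$ at degrees $13,19,21$. So the domination of the cross terms of $K^2$ by $T_0x(a^2+b^2)$ that you hope for is false. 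Moreover, the negative part of $N$ is not a polynomial. Lemma~\ref{lem:euler-tail-allocation} requires $P\in\Nzero[x]$, so it cannot be applied to $N$ and $\prod_{k\ge2}(1-x^k)^{-1}$. The Euler product has to absorb infinitely many signed terms, not a finite tail defect.

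\emph{Your fallback does not fix this.} Telescoping $K$ does not localize the defect, because the $m=0$ square $(1-x)^2$ multiplies every other square in $K^2$, and no Euler factor of size one is available.

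\emph{A smaller error.} With minimal part size $2$, the defect degree $3$ is not below the cutoff, so the exactness clause of the lemma does not apply. Since $[x^3]\prod_{k\ge2}(1-x^k)^{-1}=1$, one has $[x^3]R_0=N_3+N_1+N_0=-3+1+1=-1$. Your formula $N_3+N_1$ would give $-2$.

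\textbf{The missing idea} is to choose a decomposition in which the denominator absorbs the signed squares term by term. The paper does this as follows.
\begin{enumerate}
\item It uses the equivalent form $a^2+xL^2+x^2b^2$ of the numerator. Its only signed part is $xL^2$, whose telescoped squares have even size.
\item It merges $T_0=(x^4;x^4)_\infty^2/(x^2;x^2)_\infty$ into the denominator to obtain $\mathcal V$. This has one factor at each odd size $\ge3$ and two at each size $\equiv2\pmod4$.
\item It writes $L=\sum_{m\ge1}t_m$ with $t_m=x^{2m(m-1)}(1-x^{2m})^2$. Each $\mathcal V t_mt_n$ is then nonnegative unless $m,n\in\{1,2,4\}$. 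One square uses the two size-two factors. The other uses the two size-six factors ($m=3$), or the odd pairs $\{3,d-3\}$ and $\{5,d-5\}$ via \eqref{eq:l1-pair-absorption} ($d=2m\ge12$), or $\{3,7\}$ together with the single size five ($d=10$).
\item Only after this is the defect finite. Keep $1+8x^2\le a^2+x^2b^2$ and $L_*=t_1+t_2+t_4$, and set $P_*=1+8x^2+xL_*^2$. Then $P_*/\bigl((1-x^2)^2(1-x^3)\bigr)$ has a rational certificate with a unique coefficient $-1$ at $x^3$ and coefficients at least $2$ from degree four.
\item The Euler-tail lemma is then applied with minimal part size $5$ to the remaining factors of $\mathcal V$. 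Now the defect degree $3$ lies below that cutoff.
\end{enumerate}
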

\begin{proof}
Using \eqref{eq:l1-classical-identities}, write
\begin{equation}\label{eq:l1-even-product}
 R_0=\mathcal V(a^2+xL^2+x^2b^2),\qquad
 \mathcal V=\prod_{\substack{k\ge3\\ k\text{ odd}}}(1-x^k)^{-1}
 \prod_{\substack{k\ge2\\ k\equiv2\ (4)}}(1-x^k)^{-2}.
\end{equation}
The identity in the numerator is
$(1+x)P^2-4xT_0^2=a^2+x(b-a)^2+x^2b^2$.
The elementary theta telescope is
\[
 L=\sum_{m\ge1}t_m,\qquad
 t_m=x^{2m(m-1)}(1-x^{2m})^2.
\]
We claim $\mathcal Vt_mt_n\ge0$ if at least one of $m,n$ lies
outside $\{1,2,4\}$. By symmetry take this index to be $m$.
Absorb the $n$ square using the two size-$2$ Euler factors. If $m=3$,
use the two size-$6$ factors for its square. If $d=2m\ge12$, use the
disjoint odd pairs $\{3,d-3\}$ and $\{5,d-5\}$ and the identity
\begin{equation}\label{eq:l1-pair-absorption}
 \frac{1-x^{r+s}}{(1-x^r)(1-x^s)}
 =\frac1{1-x^r}+\frac{x^s}{1-x^s}\ge0.
\end{equation}
This is also the $L=1$ specialization of
\cite[Proposition~4.3]{BG08eta}, with $z_1=x^r$ and $z_2=x^s$.
All four sizes are distinct and at least $3$.
For $d=10$ use $\{3,7\}$ and the single size $5$, since
$(1-x^{10})/(1-x^5)=1+x^5$. These factorizations use no size-$2$
factor beyond those already assigned to $n$, and prove the claim.
Here and in the odd-slice argument below, distinct Euler-factor
copies are required within each summand; a repeated size uses its
two available copies. Different summands are separate coefficientwise
inequalities and may each use the same denominator.

Put $L_*=t_1+t_2+t_4$ and
\begin{equation}\label{eq:l1-even-finite-numerator}
 P_*=1+8x^2+xL_*^2,\qquad
 Q=\frac{P_*}{(1-x^2)^2(1-x^3)}.
\end{equation}
The claim, together with $a^2+x^2b^2\ge1+8x^2$, shows that
$R_0-\mathcal VP_*$ is nonnegative and vanishes through degree $3$.
The rational identity in Appendix~\ref{app:l1-certificates} proves
\begin{equation}\label{eq:l1-even-rational-bounds}
 [x^3]Q=-1,\quad [x^n]Q\ge0\ (n\ne3),\quad
 [x^n]Q\ge2\ (n\ge4).
\end{equation}
To include the remaining Euler factors, write
\[
 \mathcal VP_*=QW,\qquad
 W=\prod_{\substack{k\ge5\\ k\text{ odd}}}(1-x^k)^{-1}
   \prod_{\substack{k\ge6\\ k\equiv2\ (4)}}(1-x^k)^{-2}.
\]
Apply Lemma~\ref{lem:euler-tail-allocation} with $b=5$, $P=x^3$,
and $A_+=Q+x^3$. Here $c_k=1$ for odd $k\ge5$,
$c_k=2$ for $k\ge6$ congruent to two modulo four, and $c_k=0$
otherwise. Equation~\eqref{eq:l1-even-rational-bounds} supplies
$[x^{k+3}]A_+\ge2\ge c_k$. Thus $QW+x^3\ge0$, and the
coefficient of $x^3$ in $QW$ is exactly $-1$.
Adding $R_0-\mathcal VP_*$, which is
nonnegative and starts in degree $4$, proves the lemma.
\end{proof}

\subsection{The odd slices and positive capacities}
\begin{lemma}\label{lem:l1-odd-capacity}
For every $n\ge0$,
\[
 [x^n]g_+\ge1,\qquad [x^n]g_-\ge1,\qquad [x^4]g_-\ge3.
\]
\end{lemma}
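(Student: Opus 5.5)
The plan is to work directly from the dissected forms \eqref{eq:l1-oddplus-scalar}--\eqref{eq:l1-oddminus-scalar} and isolate, in each of $g_\pm$, one summand that is manifestly nonnegative and has a coefficient at least one in every degree. The remaining summands only need to be shown nonnegative. The key point is that the common term $x(1-x)Pab/E$ is nonnegative: cancelling the size-one Euler factor gives $x\,Pab\prod_{k\ge2}(1-x^k)^{-1}$. This term vanishes at degree zero, so it can only supply capacity from degree one onward.

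For $g_+$, I would rewrite $aK_2L/E$ and $xbL_2K/E$ using the telescopes for $K$, $L$, $K_2$, $L_2$. Each telescope is a sum of squares $x^{\ast}(1-x^d)^2$; for $K_2$ and $L_2$ these are the telescopes of Proposition~\ref{prop:kernel} with $x$ replaced by $x^2$, so their squares have the form $(1-x^{2d'})^2$. In a product of two telescopes, a term contains two squares. I would absorb them into distinct Euler factors of $1/E$, using the pair-absorption identity \eqref{eq:l1-pair-absorption} or $(1-x^{d})/(1-x^{e})$ with $e\mid d$, exactly as in the proof of Lemma~\ref{lem:l1-even-spectrum}.

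The exceptional factor is the $m=0$ square $(1-x)^2$ of $K$. There I would use \eqref{eq:exceptional-one-factor-square}, that is, $K/E=V-x$ with $V\ge0$ and $V_n\ge1$ for $n\ge2$ (Lemma~\ref{lem:common-scalar-bounds}). Concretely:
\begin{itemize}
\item $aK_2L/E = aK_2G$. Here $G\ge(1+x)Q_3$, and $K_2$ has the form $1-2x^2+\cdots$. I expect the $m=0$ term $(1-x^2)^2$ of $K_2$ to be absorbable using the size-two factor, which still remains in $G$ after $L/E$ has been handled. This may require going back to the level of $L/E$ rather than the bound on $G$.
\item $xbL_2K/E=xbL_2(V-x)$. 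This can have negative coefficients, at most $-x^2bL_2$, and these must be paid for by $x(1-x)Pab/E$ or by the other summand.
\end{itemize}
The lower bound $[x^n]\ge1$ should then come from the constant-term contributions. For example, $a\cdot1\cdot G$ gives $G_n\ge1$ except at $n=2$, where $G_2=0$. The degree-two gap, and similar small degrees, would be covered by the term $x(1-x)Pab/E$, whose coefficients are positive from degree one onward.

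For $g_-$ the roles swap. The term $bK_2K/E=bK_2(V-x)$ starts with constant coefficient $b_0V_0=2$. Using $V_n\ge1$ for $n\ge2$ and $V_0=1$ gives a floor of one in all degrees except degree one, where $V_1=0$. Degree one would be supplied by the term $x(1-x)Pab/E$ and by the term $xaL_2L/E$, since $L_2$, $a$ and $G$ all have constant term one.

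The extra bound $[x^4]g_-\ge3$ I would obtain by a finite truncation. Expand everything modulo $x^5$ using $K=1-2x+2x^2+O(x^5)$, $L=1-2x^2+2x^4$, and the known $V_i$ and $G_i$, then compute the coefficient exactly. If necessary, I would certify it with a short rational computation like those in Appendix~\ref{app:l1-certificates}.

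The main obstacle is the $-x$ defect in $K/E$, which multiplies $bK_2$ or $bL_2$. The negative series $-x^2bL_2$ in $g_+$ (and $-xbK_2$ in $g_-$) must be dominated coefficientwise without double-using capacity. My first attempt is a direct comparison. Note that $b$ and $bL_2$ are supported on even degrees with small, square-root-size coefficients. I would compare them with $x(1-x)Pab/E\ge x\,Pab\,Q_3$-type bounds, using the estimate $(Q_3)_n\ge\lfloor n/2\rfloor-1$ as in the proof of Proposition~\ref{prop:two-slot-compensation}, and check the finitely many small degrees by hand. If the margin is too thin, I would fall back on Lemma~\ref{lem:euler-tail-allocation}. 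That route truncates to a rational function with few defects, verifies capacities from a rational certificate, and absorbs the Euler tail.
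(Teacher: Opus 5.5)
There is a genuine gap. Your plan needs one summand of \eqref{eq:l1-oddplus-scalar} or \eqref{eq:l1-oddminus-scalar} that supplies a floor of one while the other summands are nonnegative. That structure does not exist: $[x^2](xbL_2K/E)=-2$ in $g_+$, and $[x^1](bK_2K/E)=-2$ in $g_-$. The two mechanisms you propose for the nonnegative part are also incorrect.

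\emph{First mechanism.} The size-two Euler factor does not remain in $G$. Absorbing a single square $(1-x^2)^2$ already uses both the size-one and size-two factors; that is exactly where $G\ge(1+x)Q_3$ comes from. So the $m=0$ square of $K_2$ times the $m=1$ square of $L$ cannot be absorbed. Indeed $K_2L=1-4x^2+O(x^4)$ gives $[x^2](K_2L/E)=-2$. Square absorption yields only $K_2L/E\ge P_0/E$ with $P_0=(1-x^2)^2\{(1-x^2)^2+x^4(1-x^4)^2\}$. The same obstruction occurs for the size pairs $(4,2),(4,4)$ in $L_2L$ and $(2,3),(2,5)$ in $K_2K_{\rm tail}$.

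\emph{Second mechanism.} Writing $xbL_2(V-x)$ and $bK_2(V-x)$ treats $bL_2$ and $bK_2$ as nonnegative multipliers. But $L_2=1-2x^4+\cdots$ and $K_2=1-2x^2+\cdots$ are signed, and $V$ has no Euler factors left to absorb their squares. So the negative part is not ``at most $-x^2bL_2$''. There is also no floor coming from $b_0V_0$: from $bK_2=2-4x^2+\cdots$ one gets $[x^2](bK_2K/E)=0$.

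\emph{No slack.} The bounds are attained with equality: $[x^0]g_+=1$ and $[x^1]g_-=1$. Heuristic comparisons of theta coefficients against $(Q_3)_n$ therefore cannot replace exact accounting.

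The missing idea is to split $K=(1-x)^2+K_{\rm tail}$ and fuse the $(1-x)^2$ part with the common term through the exact identities
\begin{gather*}
 xbL_2(1-x)^2+x(1-x)Pab=xb(1-x)\{P(a-1)+B_0+xA_0\},\\
 bK_2(1-x)^2+x(1-x)Pab=b(1-x)\{A_0+xP(a-1)+x^3B_0\}.
\end{gather*}
Both are nonnegative after division by $E$, because $(1-x)/E\ge0$.

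The remaining products are $aK_2L$ and $xbL_2K_{\rm tail}$ for $g_+$, and $xaL_2L$ and $bK_2K_{\rm tail}$ for $g_-$. These are absorbed square by square, except for the blocks $P_0,P_2,P_3$, which are not nonnegative over $E$. The paper combines these blocks with explicit lower bounds taken from the fused terms ($b,B_0\ge2$, $A_0\ge1$, $bP\ge2+4x$). This produces the kernels $H_1,H_0,I,J_0,H$. Their nonnegativity and capacities come from the finite rational certificates of Appendix~\ref{app:l1-certificates}. These include the zero of $H_0$ at degree two, which $(a-1)H_1$ repairs.

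Your direct truncation for $[x^4]g_-\ge3$ is fine; the coefficient is $21$. The all-degree bounds, however, need the structure above.
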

\begin{proof}
We use the four telescopes
\begin{align*}
 L&=\sum_{m\ge1}x^{2m(m-1)}(1-x^{2m})^2,&
 K&=\sum_{n\ge0}x^{2n^2}(1-x^{2n+1})^2,\\
 L_2&=\sum_{m\ge1}x^{4m(m-1)}(1-x^{4m})^2,&
 K_2&=\sum_{n\ge0}x^{4n^2}(1-x^{4n+2})^2.
\end{align*}
Each follows by expanding a square and shifting one index; the first
and third are the same identity at $x$ and $x^2$, as are the second
and fourth. Put $K_0=(1-x)^2$ and $K_{\rm tail}=K-K_0$.

We first record which pairs of squares can be absorbed by distinct
Euler factors in $E$. For even $d,e\ge2$ with $\max(d,e)\ge6$,
\begin{equation}\label{eq:l1-two-even-squares}
 (1-x^d)^2(1-x^e)^2/E\ge0.
\end{equation}
Take $d=\max(d,e)$. Use factors at sizes $1,e$ for the $e$ square.
For $d\ge12$ use the odd pairs $\{3,d-3\}$ and $\{5,d-5\}$
for the remaining square; for $d=10$ use $\{3,7\}$ and $5$.
The finite endpoints use the following two unused divisor sizes:
\[
\begin{array}{c|c|c}
 d&e&\text{divisor sizes}\\\hline
 6&2&3,6\\6&4\text{ or }6&2,3\\
 8&2&4,8\\8&4&2,8\\8&6\text{ or }8&2,4
\end{array}
\]
Every individual quotient by a divisor is a finite geometric polynomial.
All assigned sizes are distinct within a summand, since $E$ has one
factor at each size; \eqref{eq:l1-pair-absorption} handles
an assigned pair whose sum is $d$.
For odd $d\ge3$ and even $e\ge4$, use $1,d$ for the odd square and
$2,e$ for the even square. For $e=2$ and odd $d\ge7$, use $1,2$
for the even square, $d$ for one remaining factor, and $\{3,d-3\}$
for the other. This proves the corresponding nonnegativity in both
mixed cases. The two exceptions $e=2$, $d=3,5$ are retained below.

Define the finite polynomials
\begin{align}
 P_0&=(1-x^2)^2\{(1-x^2)^2+x^4(1-x^4)^2\},\nonumber\\
 P_2&=(1-x^4)^2\{(1-x^2)^2+x^4(1-x^4)^2\},\label{eq:l1-odd-bad-blocks}\\
 P_3&=(1-x^2)^2\{x^2(1-x^3)^2+x^8(1-x^5)^2\}.\nonumber
\end{align}
The square absorptions imply
\begin{equation}\label{eq:l1-odd-truncation}
 \frac{K_2L}{E}\ge\frac{P_0}{E},\quad
 \frac{L_2L}{E}\ge\frac{P_2}{E},\quad
 \frac{L_2K_{\rm tail}}E\ge0,\quad
 \frac{K_2K_{\rm tail}}E\ge\frac{P_3}E.
\end{equation}
Here the only exceptions in $K_2L$ have square sizes $(2,2)$ or
$(2,4)$, giving $P_0$. Those in $L_2L$ have sizes $(4,2)$ or
$(4,4)$, giving $P_2$. Every term of $L_2K_{\rm tail}$ has an even
size at least $4$ and an odd size at least $3$. In
$K_2K_{\rm tail}$ the only exceptions have sizes $(2,3)$ or
$(2,5)$, giving $P_3$. This accounts for every double-telescope term.
The quadratic initial degrees make all sums locally finite.

Introduce
\begin{align}
 H_1&=\{P_0+(2x+4x^2)(1-x)\}/E,&
 H_0&=\{P_0+(4x+2x^2)(1-x)\}/E,\nonumber\\
 I&=\{P_2+(2+4x)(1-x)\}/E,&
 J_0&=\{P_3+(1+2x^3)(1-x)\}/E,\label{eq:l1-five-kernels}\\
 H&=\{xP_2+2P_3+(2+4x^3)(1-x)\}/E.\nonumber
\end{align}
The five rational identities in
Appendix~\ref{app:l1-certificates} show that all these kernels are
nonnegative, with
\begin{equation}\label{eq:l1-five-capacities}
 (H_1)_0=1,\quad (H_0)_2=0,\quad (H_0)_n\ge1\ (n\ne2),
 \quad H_n\ge1\ (n\ge0),\quad H_4\ge3.
\end{equation}
Subscripts here denote coefficients in $x$.

For $g_+$, combine the $K_0$ summand with the last term of
\eqref{eq:l1-oddplus-scalar}. The identity
\[
 (1-x)L_2+Pa=P(a-1)+B_0+xA_0
\]
and \eqref{eq:l1-odd-truncation} give
\[
 g_+\ge\frac{aP_0+xb(1-x)\{P(a-1)+B_0+xA_0\}}{E}
       \ge(a-1)H_1+H_0.
\]
To verify the last inequality, the difference is $(1-x)/E$ times
\[
 x(a-1)\{bP-(2+4x)\}+x(bB_0-4)+x^2(bA_0-2),
\]
a nonnegative series. We used $a\ge1$, $b,B_0\ge2$, $A_0\ge1$,
$bP\ge2+4x$, and $(1-x)/E\ge0$.
By \eqref{eq:l1-five-capacities}, $H_0$ has coefficient at least one at
every degree except $2$. At degree $2$, $(a-1)H_1$ supplies at least
$(a-1)_2(H_1)_0=2$. This proves the bound for $g_+$.

For $g_-$, similarly use
\[
 (1-x)K_2+xPa=A_0+xP(a-1)+x^3B_0
\]
to obtain
\[
 g_-\ge\frac{xaP_2+bP_3+b(1-x)\{A_0+xP(a-1)+x^3B_0\}}{E}
       \ge x(a-1)I+(b-2)J_0+H.
\]
The difference in the last inequality is $(1-x)/E$ times
\[
 b(A_0-1)+x^3b(B_0-2)+x(a-1)\{bP-(2+4x)\}\ge0.
\]
The bounds for $H$ in \eqref{eq:l1-five-capacities} finish the proof.
\end{proof}

\subsection{Restoring the fifth theta factor}
\begin{proof}[Proof of Proposition~\ref{prop:local-L1}]
By Lemmas~\ref{lem:l1-even-spectrum} and \ref{lem:l1-odd-capacity},
the negative terms of $R_1$ are precisely
\begin{equation}\label{eq:l1-negative-atoms}
 -u^{2h}q^{2h^2+6}\qquad(h\in\Z).
\end{equation}
Fix a total BG exponent $j\in\{-1,0,1\}$ and put $f(r)=r(2r-1)$.
After multiplication by $X$, the negative term indexed by $h$ has
outer theta index $r=j-2h$ and total degree
\[
 N=f(j-2h)+2h^2+6.
\]
We assign it to a positive label from an odd slice $s=t$ of $R_1$,
with outer theta index $r'=j-t$. Its degree is
$f(j-t)+(t^2+1)/2+e$, where the available coefficient is
$[x^{e/2}]g_+$ or $[x^{e/2}]g_-$ according to $t$ modulo $4$.
Choose
\begin{equation}\label{eq:l1-outer-injection}
\begin{array}{c|c|c}
 \text{source}&t&e\\\hline
 h\le0&2h+1&2+4j-10h\\
 h\ge1&2h-1&4-4j+10h
\end{array}
\end{equation}
except that for $(j,h)=(-1,0)$ we choose $t=-1$, $e=8$.
Direct subtraction gives
\[
 f(j-2h)+2h^2+6=f(j-t)+(t^2+1)/2+e
\]
in both rows and in the exceptional case.
For $h\le-1$ one has $e\ge8$, and for $h\ge1$ one has $e\ge10$.
For $h=0$, $j=0,1$ gives $e=2,6$ respectively. Hence all assigned
coefficients exist and are at least $1$ by
Lemma~\ref{lem:l1-odd-capacity}.

It remains to check that the target coefficients have sufficient
multiplicity. Within each row of \eqref{eq:l1-outer-injection}, the target index
$t$ determines the source. The only common $t$ of the two rows is $t=1$,
coming from $h=0$ and $h=1$. Their degrees $e=2+4j$ and $14-4j$
would agree only for $j=3/2$, which is excluded. The exceptional
assignment $(j,h)=(-1,0)$ coincides only with the assignment from
$(j,h)=(-1,-1)$: both have $t=-1,e=8$. This target therefore
receives exactly two sources, while $[x^4]g_-\ge3$ supplies at least
three positive labels. Assign the two sources distinct labels.
Thus the assignment is injective and preserves both $u$-
and $q$-degree. Every unassigned contribution is nonnegative, proving $[u^j]XR_1\ge0$ for $j=-1,0,1$.

The labeled assignment is locally finite and hence satisfies
Lemma~\ref{lem:coefficient-allocation}. Finally $XR_1$ is a scalar
linear combination of $U_{5,0},U_{5,2},U_{5,4}$, so
Lemma~\ref{lem:common-theta-index-transformations} extends the
three representative inequalities to every integer BG value.
This proves \eqref{eq:l1-target}.
\end{proof}

\section{The second local compensation inequality}\label{sec:local-L2}
\label{subsec:second-compensation}

The next inequality combines $\Theta^{(4)}-q\Theta^{(2)}$ with
the exceptional difference $\Theta^{(2)}-q\Theta^{(0)}$.

\begin{proposition}\label{prop:local-L2}
The series
\[
 \mathcal L_2(u,q)=
 \frac{\Theta^{(4)}+(q^3-q)\Theta^{(2)}-q^4\Theta^{(0)}}{D(q)}
\]
has nonnegative coefficients.
\end{proposition}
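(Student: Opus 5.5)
The plan follows the template of Proposition~\ref{prop:local-L1}: factor out one theta factor and study a quartic kernel. Since every term of the numerator contains at least one factor $X=S(u,q)$, I will write $\mathcal L_2=X\mathcal R_2$, where
\[
 \mathcal R_2=\frac{uY^4+(q^3-q)X^2Y^2-q^4u^{-1}X^4}{D}.
\]
First I determine the negative coefficients of $\mathcal R_2$. Then I compensate them after multiplication by $X$ through a weight-preserving labeled injection, and invoke Lemma~\ref{lem:coefficient-allocation}. Since $\mathcal L_2$ is a scalar combination of $U_{5,0},U_{5,2},U_{5,4}$, Lemma~\ref{lem:common-theta-index-transformations} reduces the final step to total BG values $j=-1,0,1$.

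For the slices, I will use the table \eqref{eq:l1-hadamard-table} and the index transformations. These should give, as in \eqref{eq:compensated-quartic-slices} and \eqref{eq:l1-slice-reduction}, scalar series $h_e,h_+,h_-$ with $[u^s]\mathcal R_2=q^{s^2/2}h_e(x)$ for even $s$ and $q^{(s^2+1)/2}h_\pm(x)$ for odd $s\equiv\pm1\pmod4$. Up to the slice normalization, the even slice numerator is $4qT_0^3+(q^3-q)P^2T_0-q^4\cdot4qT_0^3$, and the odd slices combine $(P^3\pm M^3)/2$ with $2q(q^3-q)PT_0^2$ and $-q^4(P^3\mp M^3)/2$. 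I will then substitute $P=A_0+xB_0$, $M=A_0-xB_0$, $P^2=a^2+xb^2$, $T_0^2=ab/2$, $a^2=A_0^2+x^2B_0^2$ and $b^2=2A_0B_0$. The aim is to rewrite each slice as a combination of the telescoping differences $K,L,K_2,L_2$ times nonnegative theta series, with one Euler denominator, plus a factor $(1-x)$ or $(1-x^2)$ that can be absorbed. The odd slices should reduce to products like $A_0V$, $bG$ and $aG$ minus small theta products, as in \eqref{eq:compensated-fplus-positive}. I will control these with the bounds of Lemma~\ref{lem:common-scalar-bounds}: $V\ge0$, $V_n\ge1$, $G\ge(1+x)Q_3$, and \eqref{eq:compensated-partition-lower-bound}, together with the counting bounds on $\alpha b$ and $\beta b$ used in Proposition~\ref{prop:two-slot-compensation}.

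I expect the negative part to be a single defect per slice class, such as a $-1$ or $-2$ at a fixed low relative degree in one residue class of $s$ modulo $4$, analogous to \eqref{eq:compensated-negative-part} and \eqref{eq:l1-negative-atoms}. Where a finite rational kernel appears, I will verify it through the Appendix~\ref{app:l1-certificates} type certificates. I will use Lemma~\ref{lem:euler-tail-allocation} to push a single low-degree defect through the remaining Euler tail.

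The main obstacle will be the compensation step. For each negative atom with outer index $r=j-s$, I will choose a target slice $t=s\pm1$ (or $s\pm2$) with outer index $r'=j-t$, and solve for the extra degree $e$ by subtracting quadratic degrees. I will then check three things: that $e\ge0$ lands where the capacity bounds, as in \eqref{eq:compensated-capacity}, give enough labels; that the two rows of the assignment, split by the sign of $h$, use disjoint targets; and that the finitely many small-$h$ exceptions, near $j=-1$, are resolved by hand with explicit low-degree coefficients. These exceptions are where I anticipate trouble, since the shift $q^3-q$ mixes degrees and may produce a defect at degree one relative to a slice with capacity exactly one. In that case I would first try borrowing from the even slice at the neighbouring outer index, as in the exceptional case of Proposition~\ref{prop:two-slot-compensation}.
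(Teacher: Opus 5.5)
Your architecture matches the paper's: $\mathcal L_2=X\mathcal R_2$ with $\mathcal R_2=(U_4+(q^3-q)U_2-q^4U_0)/D$, quartic slices from \eqref{eq:l1-hadamard-table}, compensation of the negative atoms of $\mathcal R_2$ in neighbouring slices $t=s\pm1$ after multiplication by $X$, and reduction to $j\in\{-1,0,1\}$. However, the proposal stops before the step that carries the proof, and the one structural formula you commit to is wrong. $U_2$ now has the coefficient $q^3-q$, which is odd in $q$, so the slice parities are flipped relative to Propositions~\ref{prop:two-slot-compensation} and~\ref{prop:local-L1}. The correct slices are $[u^s]\mathcal R_2=q^{s^2/2+1}g(x)$ for even $s$ and $q^{(s^2-1)/2}F_\pm(x)$ for $s\equiv\pm1\pmod 4$, with $g=(1-x)T\{4(1+x)T^2-P^2\}/E$. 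Your exponents $q^{s^2/2}$ and $q^{(s^2+1)/2}$ are each off by one power of $q$. Every degree equation for $e$ depends on these exponents. With the correct ones, for instance, the source at $s=1$, $j=1$ lands at relative degree $0$ of the even slice $t=0$, where $g_0=3$.

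What the proposal leaves open is the exact spectrum of $\mathcal R_2$. For the odd slices your plan is what the paper does. It writes $F_+=\alpha(a+x^2b)K/E+x^2\beta(b+xa)G$ and $F_-=x\alpha(b+xa)G+x\beta(a+x^2b)K/E$, sets $K/E=V-x$, and beats the subtracted terms $x\,\alpha(a+x^2b)$ and $x\,\beta(a+x^2b)$ by counting bounds against $G\ge(1+x)Q_3$. The outcome is that the only negatives are $[x]F_+=-1$ and $[x^4]F_-=-2$.

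The even slice supplies all the capacity, and there your recipe fails as stated. You need $g\ge0$ with $g_0=3$, $g_1=0$, and $g_n\ge2$ for $n\ge2$. Rewriting by telescopes gives $4(1+x)T^2-P^2=KL+(1+x)ab$, so $g=T\{(1-x)KL+(1-x^2)ab\}/E$. But $(1-x)KL/E$ already has coefficient $-2$ at $x$, so the terms are not individually nonnegative; positivity appears only after cancellation. The paper instead pairs each term $4x^{r(r+1)+s(s+1)}$ of $4T^2$ with the four terms of $P^2$ indexed by $(\pm(r+1),\pm(s+1))$; this is identity \eqref{eq:l2-even-pairing}. What remains is $p(x)=3+8x^3-4x^4-4x^5$, and its quotient by $(1-x^2)(1-x^3)(1-x^4)$ is certified nonnegative with period $12$. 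No Euler-tail lemma is needed, since there is no negative coefficient there, only the zero at $x$.

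With these facts in hand, the allocation \eqref{eq:l2-outer-injection} has no exceptional cases. For $j\in\{-1,0,1\}$ every required $e/2$ is even, so the hole $g_1=0$ is never touched. The capacities $g_0=3$ and $g_m\ge2$ ($m\ge2$) cover both source multiplicities. The only shared target, $t=0$, receives the two $h=0$ sources at distinct extra degrees $4-4j$ and $8+4j$. The difficulty you anticipate is therefore misplaced. The low-degree obstruction is a capacity of zero, not one, in the target slice, and it is avoided by the parity of $e$, not by borrowing.
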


Write $X=S(u,q)$, $Y=S(u^{-1},q)$, and $x=q^2$. Put
\[
 U_0=u^{-1}X^4,\qquad U_2=X^2Y^2,\qquad U_4=uY^4,
 \qquad
 \mathcal R_2=\frac{U_4+(q^3-q)U_2-q^4U_0}{D}.
\]
Thus $\mathcal L_2=X\mathcal R_2$. We first determine the complete
negative part of $\mathcal R_2$ and establish a lower bound for its
even BG slices. We then compensate the negative coefficients after
multiplication by $X$.

Retain the notation of Lemma~\ref{lem:common-scalar-bounds}, and put
\[
 P=\phi(x),\quad M=\phi(-x),\quad T=\psi(x^2),\quad
 \alpha=\phi(x^4),\quad\beta=2\psi(x^8).
\]
The elementary duplication identities are
\begin{equation}\label{eq:l2-duplication}
 \begin{gathered}
 P=\alpha+x\beta,\qquad M=\alpha-x\beta,\qquad
 a^2=\alpha^2+x^2\beta^2,\qquad b^2=2\alpha\beta,\\
 P^2=a^2+xb^2,\qquad T^2=ab/2.
 \end{gathered}
\end{equation}
They follow by separating the parity classes of the theta indices.
\begin{lemma}\label{lem:l2-quartic-spectrum}
There are scalar series $g,F_+,F_-$ such that
\begin{equation}\label{eq:l2-slices}
 [u^s]\mathcal R_2=
 \begin{cases}
 q^{s^2/2+1}g(x),&s\text{ even},\\
 q^{(s^2-1)/2}F_+(x),&s\equiv1\pmod4,\\
 q^{(s^2-1)/2}F_-(x),&s\equiv-1\pmod4.
 \end{cases}
\end{equation}
They satisfy
\begin{equation}\label{eq:l2-scalar-capacity}
 g\ge0,\qquad g_0=3,\quad g_1=0,\quad g_n\ge2\ (n\ge2),
\end{equation}
\begin{equation}\label{eq:l2-odd-spectrum}
 F_++x\ge0,\qquad [x]F_+=-1,\qquad
 F_-+2x^4\ge0,\qquad [x^4]F_-=-2.
\end{equation}
Consequently the complete negative part of $\mathcal R_2$ is
\begin{equation}\label{eq:l2-negative-part}
 -\sum_{h\in\mathbb Z}u^{4h+1}q^{((4h+1)^2+3)/2}
 -2\sum_{h\in\mathbb Z}u^{4h-1}q^{((4h-1)^2+15)/2}.
\end{equation}
\end{lemma}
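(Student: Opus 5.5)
We follow the pattern already used for $\mathcal R$ in Proposition~\ref{prop:two-slot-compensation} and for $R_1$ in Section~\ref{sec:local-L1}. First we compute the slices. Read off $[u^s]U_0$, $[u^s]U_2$, $[u^s]U_4$ from the quartic table \eqref{eq:l1-hadamard-table} for $s\in\{-1,0,1\}$, and transport them to all $s$ by Lemma~\ref{lem:common-theta-index-transformations} with $d=4$. The degree shifts there do not depend on $k$, so $\mathcal R_2$ has the same three-class slice structure as the quartic pieces. Matching $s=0,\pm1$ then gives:
\[
 q\,g(x)=\frac{4qT^3+(q^3-q)P^2T-4q^5T^3}{E},
 \qquad
 F_\pm(x)=\frac{(P^3\pm M^3)/2+2(x^2-x)PT^2-x^2(P^3\mp M^3)/2}{E}.
\]
The $\pm$ signs still need to be checked against the $U_0,U_4$ columns. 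Next we substitute the duplications \eqref{eq:l2-duplication}, including $T^2=ab/2$, $P^2=a^2+xb^2$ and $P^3\pm M^3$ written via $\alpha,\beta$. The goal is to rewrite each numerator in terms of $K=a-xb$, $L=b-a$ and the analogous $K_2=\alpha-x^2\beta$, $L_2=\beta-\alpha$.

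For the even slice we aim for a decomposition like \eqref{eq:compensated-even-positive}. We factor out $T$ and express $(2(1-x^2)b-(1-x)(a^2+xb^2)/a\ldots)$ as a combination of $aV$, $bG$, $W=(1-x)P^2/E$ and theta products with nonnegative coefficients. Lemma~\ref{lem:common-scalar-bounds} ($V\ge0$, $V_n\ge1$ for $n\ge2$, $G\ge(1+x)Q_3$) then gives $g\ge0$ and $g_n\ge2$ for $n\ge2$. The small values $g_0=3$ and $g_1=0$ come from direct expansion using $K=1-2x+2x^2+\cdots$ and $L=1-2x^2+\cdots$.

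For the odd slices we write $F_\pm$ as sums of terms $\alpha bV$, $x\beta aG$, $C$-type terms $x^k(1-x)Pab/E$, and the two comparisons $\beta aG-\alpha b$ and $\alpha aG-x^2\beta b$, as in \eqref{eq:compensated-fplus-positive}--\eqref{eq:compensated-fminus-positive}. The representation-count bounds already proved there (coefficients of $\alpha b$ at most $2(2\lfloor\sqrt m\rfloor+1)$, and of $\beta b$ at most $4(\lfloor\sqrt m\rfloor+1)$) are compared with $G_{4m}$ and $G_{4m+2}$ through \eqref{eq:compensated-partition-lower-bound}. The finitely many small $m$ are handled with the explicit values \eqref{eq:l2-g-small}. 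The defects $-x$ in $F_+$ and $-2x^4$ in $F_-$ arise where $V_1=0$, and at degree four where the $\beta b$ comparison falls short by two. Adding $x$, respectively $2x^4$, repairs them exactly. We confirm that these coefficients are exactly $-1$ and $-2$ by expanding $P,M,T,E$ to order $x^4$.

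The negative part \eqref{eq:l2-negative-part} then follows by attaching the prefactors $q^{(s^2-1)/2}$ at $s=4h\pm1$ and shifting by $x=q^2$ or $x^4=q^8$. Since $g\ge0$, the even slices contribute nothing negative.

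The main obstacle is finding the odd-slice positive decompositions. The term $(q^3-q)U_2$ introduces two coefficients $-x$ and $x^2$ with different Euler-factor budgets. The degree-four defect of $F_-$ must be shown to be the only failure, so the capacity estimate for $\alpha aG-x^2\beta b$ at $m=1$ becomes tight. If the generic bound fails at small degree, we fall back on finite rational certificates in the style of Appendix~\ref{app:l1-certificates}: truncate the telescopes to their first few squares, and absorb the remaining Euler factors with Lemma~\ref{lem:euler-tail-allocation}.
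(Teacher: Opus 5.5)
Your slice computation is correct. Your formulas for $g$ and $F_\pm$ agree with \eqref{eq:l2-even-formula}--\eqref{eq:l2-minus-formula} once $T^2=ab/2$ turns $2(x^2-x)PT^2$ into $-x(1-x)Pab$, the signs you wanted to check are right, and the passage from the scalar defects to \eqref{eq:l2-negative-part} is fine. The gap is that the two positivity statements that make up the lemma are only announced, and the tools you propose for them do not apply.

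\emph{Even slice.} Here $g=TQ_2H$ with $Q_2=(1-x)/E=\prod_{n\ge2}(1-x^n)^{-1}$ and $H=4(1+x)T^2-P^2$, so the size-one Euler factor is gone. That is precisely the factor $V=K/E+x$ and $G=L/E$ spend on their squares in \eqref{eq:one-factor-large-square}. Hence Lemma~\ref{lem:common-scalar-bounds} says nothing about $g$, and there is no analogue of \eqref{eq:compensated-even-positive}. Concretely, $H=(1+x)ab+KL$, and already the term $(1-x)^2L$ of $KL$ gives $Q_2(1-x)^2L$ a coefficient $-2$ at $x$. This is cancelled only by the $2x$ in $(1+x)ab$, a cancellation between theta series that no Euler factor can perform. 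The missing idea is the exact pairing identity \eqref{eq:l2-even-pairing}: match each pair $(r,s)$ of $4T^2$ with the four square pairs $(\pm(r+1),\pm(s+1))$ of $P^2$. Then every term carries a factor $1-x^{r+s+2}$ with $r+s+2\ge2$, which $Q_2$ absorbs. The axis terms $4x^{r^2-r+1}(1-x^{r-1})$ are absorbable for $r\ge3$, and the finitely many remaining terms collect into $p(x)=3+8x^3-4x^4-4x^5$. The residue-class certificate \eqref{eq:l2-rational-certificate} for $p/((1-x^2)(1-x^3)(1-x^4))$ is what actually delivers $g_n\ge2$ for $n\ge2$.

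\emph{Odd slices.} The template \eqref{eq:compensated-fplus-positive}--\eqref{eq:compensated-fminus-positive} does not carry over. There the $Pab$ term enters as $+C$ with $C\ge0$; in $F_\pm$ it enters as $-x(1-x)Pab/E$, so it cannot be kept as a nonnegative summand. The comparisons $\beta aG-\alpha b$ and $\alpha aG-x^2\beta b$ you cite are not the ones that arise. What is needed is the exact factorization \eqref{eq:l2-plus-factor}--\eqref{eq:l2-minus-factor}, namely $F_+=\alpha(a+x^2b)K/E+x^2\beta(b+xa)G$ and $F_-=x\alpha(b+xa)G+x\beta(a+x^2b)K/E$, which absorbs the negative $Pab$ term into $K$ and $L$. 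Writing $K/E=V-x$ then isolates the only subtractions, $-xH_+$ and $-x^2H_-$, with $H_+=\alpha(a+x^2b)$ and $H_-=\beta(a+x^2b)$. These theta products have unbounded coefficients. One must therefore show at every degree that they are dominated, via $O(\sqrt k)$ representation bounds, by $4G_{2k-1}$ and $2G_{2k+1}+G_{2k}$ respectively, with $k\le3$ checked exactly. This is also where the exact defects come from: $-1$ at $x$ (because $V_1=0$), and $-2$ at $x^4$ (where $(H_-)_2=8$ exceeds the available $(P_-)_3+(H_-V)_3=6$). Your proposed fallback of finite rational certificates cannot by itself control these infinite subtractions; a growth comparison valid at all degrees is unavoidable.
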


\begin{proof}
Substitute the common quartic table
\eqref{eq:l1-hadamard-table} into $\mathcal R_2$.
Lemma~\ref{lem:common-theta-index-transformations} extends the
three representative coefficients to every integer $s$ and gives
\eqref{eq:l2-slices}, where
\begin{align}
 g&=\frac{(1-x)T\{4(1+x)T^2-P^2\}}{E},
       \label{eq:l2-even-formula}\\
 F_+&=\frac{(P^3+M^3)/2-x(1-x)Pab-x^2(P^3-M^3)/2}{E},
       \label{eq:l2-plus-formula}\\
 F_-&=\frac{(P^3-M^3)/2-x(1-x)Pab-x^2(P^3+M^3)/2}{E}.
       \label{eq:l2-minus-formula}
\end{align}
Using \eqref{eq:l2-duplication}, the two odd formulas factor as
\begin{align}
 F_+&=\alpha(a+x^2b)K/E+x^2\beta(b+xa)G,
       \label{eq:l2-plus-factor}\\
 F_-&=x\alpha(b+xa)G+x\beta(a+x^2b)K/E.
       \label{eq:l2-minus-factor}
\end{align}
For example, the two summands in the numerator of
\eqref{eq:l2-plus-factor} are
$\alpha(a-xb)(a+x^2b)$ and $x^2\beta(b-a)(b+xa)$.
Expanding them and using $a^2=\alpha^2+x^2\beta^2$ and
$b^2=2\alpha\beta$ proves \eqref{eq:l2-plus-formula}.
The factors are exchanged, with one factor $x$, in the minus case.

We first prove the two assertions in \eqref{eq:l2-odd-spectrum}.
Let $H_+=\alpha(a+x^2b)$. Then
\begin{equation}\label{eq:l2-plus-subtraction}
 F_+=H_+V+x^2\beta(b+xa)G-xH_+.
\end{equation}
The series $H_+$ is even and satisfies
\[
 (H_+)_{2k}\le8\lfloor\sqrt{k/2}\rfloor+4\qquad(k\ge0).
\]
Indeed, $\alpha a$ counts $2r^2+s^2=k$, with $r,s\in\mathbb Z$,
so its coefficient is at most
$2(2\lfloor\sqrt{k/2}\rfloor+1)$.
For $k\ge1$, the coefficient contributed by $x^2\alpha b$ is at
most $2(2\lfloor\sqrt{(k-1)/2}\rfloor+1)$: the square index
determines at most one nonnegative triangular index. At $k=0$
this second contribution vanishes.
For $k\ge4$, the middle term of \eqref{eq:l2-plus-subtraction}
contributes at degree $2k+1$ at least
\[
 4G_{2k-1}\ge4\{[x^{2k-1}]Q_3+[x^{2k-2}]Q_3\}
 \ge8k-16\ge8\lfloor\sqrt{k/2}\rfloor+4.
\]
At $k=4$ the last inequality is $16\ge12$. For $k\ge5$,
$\sqrt{k/2}\le k-3$ gives
$8\lfloor\sqrt{k/2}\rfloor+4\le8k-20\le8k-16$.
The small subtracted coefficients at $k=0,1,2,3$ are respectively
$1,4,2,10$.
At $k=1,2,3$ they are covered by the following lower bounds for
the middle term:
\[
 2(2G_1+G_0)=6,\qquad 4G_3=4,\qquad4G_5=12.
\]
At $k=0$ both positive terms have coefficient zero at $x$, since
$V_1=0$. At even degrees there is no subtraction. This proves
$F_++x\ge0$ and $[x]F_+=-1$.

Next put $H_-=\beta(a+x^2b)$ and $P_-=\alpha(b+xa)G$. Then
\begin{equation}\label{eq:l2-minus-subtraction}
 F_-=x\{P_-+H_-V-xH_-\}.
\end{equation}
The even series $H_-$ satisfies
\[
 (H_-)_{2k}\le8\lfloor\sqrt{k/2}\rfloor+8\qquad(k\ge0).
\]
For $\beta a$, the equation is $2r(r+1)+s^2=k$, where
$r\ge0$ and $s\in\mathbb Z$. There are at most
$\lfloor\sqrt{k/2}\rfloor+1$ possible $r$, at most two possible
$s$, and a prefactor two. For $x^2\beta b$, the analogous
bound with $(k-1)/2$ counts two nonnegative triangular indices,
with at most one second index for each first index, and prefactor
four. At $k=0$ this second contribution vanishes.
At degree $2k+1$ in the braces in
\eqref{eq:l2-minus-subtraction}, the first term supplies at least
$2G_{2k+1}+G_{2k}$. For $k\ge4$,
\[
 2G_{2k+1}+G_{2k}\ge6k-7
 \ge8\lfloor\sqrt{k/2}\rfloor+8.
\]
At $k=4$ the last inequality is $17\ge16$. For $k\ge5$,
$\sqrt{k/2}\le k/3$ follows from $k\ge9/2$, and
$6k-7-(8k/3+8)=(10k-45)/3\ge0$.
The remaining values $k=0,1,2,3$ have
$(H_-)_{2k}=2,8,0,4$, respectively. At $k=0$,
$(P_-)_1=2G_1+G_0=3$. At $k=3$, use
$(P_-)_7\ge2G_7+G_6=19$. At $k=1$ the exact coefficient
in the braces is
\[
 (P_-)_3+(H_-V)_3-(H_-)_2
 =(2G_3+G_2+2G_0)+(2V_3+8V_1)-8=-2.
\]
There is no subtraction at the other parity. This proves
$F_-+2x^4\ge0$ and $[x^4]F_-=-2$.

It remains to prove \eqref{eq:l2-scalar-capacity}. Set
\[
 Q_2(x)=\prod_{n\ge2}(1-x^n)^{-1},\qquad
 H=4(1+x)T^2-P^2,
\]
so that $g=TQ_2H$. With $d(r,s)=r(r+1)+s(s+1)$, splitting
the two square indices into both nonzero, exactly one nonzero,
and both zero gives the exact identity
\begin{equation}\label{eq:l2-even-pairing}
 H=4\sum_{r,s\ge0}x^{d(r,s)}(1-x^{r+s+2})
   +4x\sum_{r,s\ge0}x^{d(r,s)}
   -4\sum_{r\ge1}x^{r^2}-1.
\end{equation}
The first sum pairs the theta tuple $(r,s)$ with the four sign
choices of the square tuple $(r+1,s+1)$. In the second sum,
separating $s=0$ and combining that part with the axis terms gives
\[
 4\sum_{r\ge1}x^{r(r-1)+1}(1-x^{r-1}).
\]
The $r=1$ term vanishes, the $r=2$ term is $4x^3(1-x)$,
and each term with $r\ge3$ becomes nonnegative after
multiplication by $Q_2$, by canceling its Euler factor of size
$r-1$. Likewise each term of the first sum in
\eqref{eq:l2-even-pairing} becomes nonnegative after multiplication
by $Q_2$, since $r+s+2\ge2$.

Extract the pairs $(r,s)=(0,0),(1,0)$ from the first sum,
the exceptional $r=2$ axis term, the pair $(r,s)=(0,1)$ from
the remaining $s\ge1$ part of the second sum, and the origin
subtraction. Their sum is
\[
 4(1-x^2)+4x^2(1-x^3)+4x^3(1-x)+4x^3-1
 =3+8x^3-4x^4-4x^5=:p(x).
\]
All remaining terms are nonnegative after multiplication by
$Q_2$. This formal decomposition is locally finite because the
theta starting degrees are quadratic.

The rational remainder has the coefficient formula
\begin{equation}\label{eq:l2-rational-certificate}
 F(x):=\frac{p(x)}{(1-x^2)(1-x^3)(1-x^4)},\qquad
 [x^{12m+r}]F=a_r+b_rm+18\binom m2
 \quad(m\ge0,\ 0\le r<12),
\end{equation}
where
\[
\begin{array}{c|rrrrrrrrrrrr}
 r&0&1&2&3&4&5&6&7&8&9&10&11\\\hline
 a_r&3&0&3&11&2&7&13&14&8&21&15&24\\
 b_r&18&27&21&30&24&33&27&36&30&39&33&42.
\end{array}
\]
Multiplying $F$ by $(1-x^{12})^3$ gives
\[
 p(x)(1+x^2+x^4+x^6+x^8+x^{10})
      (1+x^3+x^6+x^9)(1+x^4+x^8).
\]
For each residue $r$, let $A_r,B_r,C_r$ be the coefficients of
this degree-$32$ polynomial at $r,r+12,r+24$, respectively,
with missing coefficients zero. Direct multiplication gives
\[
\begin{array}{c|rrrrrrrrrrrr}
 r&0&1&2&3&4&5&6&7&8&9&10&11\\\hline
 A_r&3&0&3&11&2&7&13&14&8&21&15&24\\
 B_r&12&27&15&8&20&19&1&8&14&-3&3&-6\\
 C_r&3&-9&0&-1&-4&-8&4&-4&-4&0&0&0.
\end{array}
\]
The residue-tail formula of Lemma~\ref{lem:residue-tail} applies
with period $12$, three denominator factors, and a cleared numerator
of degree $32$. The cutoff is zero, so the formula holds for every $m\ge0$.
Explicitly, the coefficient of $y^m$ in
$(A_r+B_ry+C_ry^2)/(1-y)^3$ is
$A_r+(2A_r+B_r)m+(A_r+B_r+C_r)\binom m2$.
Here $A_r=a_r$, $2A_r+B_r=b_r$, and $A_r+B_r+C_r=18$.
This proves \eqref{eq:l2-rational-certificate} at every degree.
In particular $F_0=3$, $F_1=0$, and $F_n\ge2$ for $n\ge2$.
As
\[
 pQ_2=F\prod_{n\ge5}(1-x^n)^{-1},
\]
the same lower bounds hold for $pQ_2$, and hence for $Q_2H$.
Multiplication by $T\ge1$ proves the required inequalities for
$g$. Its exact constant and linear coefficients are $3$ and $0$
by \eqref{eq:l2-even-formula}. This proves
\eqref{eq:l2-scalar-capacity} and completes the lemma.
\end{proof}

\begin{proof}[Proof of Proposition~\ref{prop:local-L2}]
By Lemma~\ref{lem:common-theta-index-transformations}, applied to
the scalar combination $\mathcal L_2$ of the three $U_{5,k}$,
it suffices to prove positivity at total BG values $j=-1,0,1$.
Fix such a value and put $f(r)=r(2r-1)$.
The two negative source types in \eqref{eq:l2-negative-part},
after multiplication by the outer $X$, are
\[
\begin{array}{c|c|c|c}
 \text{type}&\text{inner BG }s&\text{multiplicity}&\text{total degree}\\\hline
 +&1+4h&1&f(j-s)+(s^2+3)/2\\
 -&-1+4h&2&f(j-s)+(s^2+15)/2.
\end{array}
\]
An even inner BG value $t$ and outer index $r'=j-t$ supply
positive labels at degrees
$f(r')+t^2/2+1+e$, where $e=2m$ and the capacity is $g_m$.
Assign the negative sources by the following table:
\begin{equation}\label{eq:l2-outer-injection}
\begin{array}{c|c|c|c}
 \text{type}&\text{range}&t&e\\\hline
 +&h\ge0&s-1=4h&4-4j+20h\\
 +&h\le-1&s+1=4h+2&4j-8-20h\\
 -&h\le0&s+1=4h&4j+8-20h\\
 -&h\ge1&s-1=4h-2&20h-4j.
\end{array}
\end{equation}
Each row preserves total BG and degree, by direct subtraction of
the displayed quadratics. For $j\in\{-1,0,1\}$, every $e/2$
is even and is either zero or at least two. The capacities
$g_0=3$ and $g_m\ge2$ for $m\ge2$ therefore cover both source
multiplicities.

No positive label is used twice. The four target BG sets are,
respectively, nonnegative multiples of four, negative integers
congruent to two modulo four, nonpositive multiples of four, and
positive integers congruent to two modulo four. Their only overlap
is $t=0$, from the two sources with $h=0$. Those sources require
extra degrees $4-4j$ and $8+4j$, which are distinct for every
integer $j$, so they use different coefficients. Within each row, $t$
determines $h$ uniquely. Use two distinct labels of the target coefficient
when the source has multiplicity two.

By Lemma~\ref{lem:l2-quartic-spectrum}, every remaining
coefficient of $\mathcal R_2$ is nonnegative. The assignment
\eqref{eq:l2-outer-injection} covers all negative sources by
unused positive labels of equal total BG and degree. All sums are
locally finite because their starting degrees are quadratic.
Lemma~\ref{lem:coefficient-allocation} therefore proves
$X\mathcal R_2\ge0$ at the three representative BG values,
and Lemma~\ref{lem:common-theta-index-transformations} proves the
assertion for all BG values.
\end{proof}

\section{Symmetries, consequences, and examples}\label{sec:consequences}

\subsection{Reciprocity and BG-rank support}

Reflection gives reciprocity, and the total mass
agrees with~\cite[Theorem 4.1]{BG08}.

\begin{corollary}\label{cor:reciprocity}
For every $n\in\Nzero$ and $j\in\Z$,
\begin{equation}\label{eq:reciprocity}
 K_{n,j}(z)=z^4K_{n,j}(z^{-1}),
 \qquad H_{n,j}(z)=H_{n,j}(z^{-1}),
\end{equation}
and
\begin{equation}\label{eq:total-mass}
 H_{n,j}(1)=\frac{p_j(5n+4)}5.
\end{equation}
\end{corollary}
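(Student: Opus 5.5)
Both identities in \eqref{eq:reciprocity} come from a single reflection of the lattice $\A$. The mass formula \eqref{eq:total-mass} then follows by evaluating at $z=1$.

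\textbf{The reflection.} Let $\tau(a_0,a_1,a_2,a_3,a_4)=(a_3,a_2,a_1,a_0,a_4)$. This map preserves $\sum_i a_i=1$. The cyclic neighbour pairs of $\tau a$ are the neighbour pairs of $a$ listed in reverse order, so $Q(\tau a)=Q(a)$. It also permutes the edge-parity signs $e_i$, so $k(\tau a)=k(a)$ and hence $\Theta_{\tau a}=\Theta_a$ by \eqref{eq:three-weights}. For the crank, compute $L(\tau a)=a_2+2a_1+3a_0+4a_4$. Then
\[
L(a)+L(\tau a)=3(a_0+a_1+a_2+a_3)+4a_4=3+a_4.
\]
This is not yet the constant $4$, so I would compose $\tau$ with a power of the rotation $\rho$ and use \eqref{eq:rotation-QL} to correct the $a_4$ term. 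A cleaner choice is the full reversal $\tau'(a)_i=a_{-i}$ followed by a suitable rotation. I expect an involution $\iota=\rho^m\tau'$ with $L(a)+L(\iota a)=4$ to exist. This is the one step needing care: I would find $m$ by trying $m=0,\dots,4$ and checking linearity in the coordinates.

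\textbf{Reflection on partitions.} I expect $\iota$ to be conjugation $\lambda\mapsto\lambda'$ in charge coordinates. Conjugation sends content $m$ to $-m$, so $R_r\leftrightarrow R_{-r}$ and $\cc(\lambda')=1-2(R_0-R_4)\ldots$. Concretely, $\cc(\lambda)+\cc(\lambda')=2+R_1+R_4-R_3-R_2+2(R_0-R_1+R_0-R_4)$. On a partition of size $\equiv4$ this should reduce to $4$ via the charge relations \eqref{eq:residue-charges} and the size formula. For BG-rank, \eqref{eq:bg-def} is symmetric under $(i,k)\mapsto(k,i)$, so $\BG(\lambda')=\BG(\lambda)$. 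Conjugation also preserves size. Summing over $\lambda\vdash 5n+4$ with $\BG=j$ then gives $K_{n,j}(z)=\sum z^{4-\cc(\lambda)}=z^4K_{n,j}(z^{-1})$ directly.

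This partition argument avoids the lattice altogether. The main obstacle is therefore verifying the identity $\cc(\lambda)+\cc(\lambda')=4$ from \eqref{eq:residue-charges}. Conjugation maps runner charges by $c_r(\lambda')=-c_{4-r}(\lambda)$, since the beta-set complement is reflected through $-1/2$. With this, $\cc(\lambda')=1-c_3-c_2+2c_0$. Adding gives
\[
\cc(\lambda)+\cc(\lambda')=2+c_1-c_3+2(c_0-c_4).
\]
This should equal $4$ exactly when $|\lambda|\equiv4\pmod5$. I would confirm this through \eqref{eq:charge-map}: it gives $c_1-c_3+2c_0-2c_4=\sum_i a_i+\sum_i a_i$, which equals $2$.

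\textbf{Symmetry of the quotient.} Since $z^{-2}[5]_z$ is symmetric, dividing the first identity in \eqref{eq:reciprocity} by $[5]_z$ gives $H_{n,j}(z)=z^{4}z^{-4}H_{n,j}(z^{-1})$, which is the second identity.

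\textbf{The mass formula.} Setting $z=1$ gives $5H_{n,j}(1)=K_{n,j}(1)=p_j(5n+4)$. This is \eqref{eq:total-mass}, and it agrees with \cite[Theorem~4.1]{BG08}.
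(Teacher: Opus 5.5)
\textbf{Gap: conjugation does not reverse the integer crank.} Your main route fails at this step. Your formulas $c_r(\lambda')=-c_{4-r}(\lambda)$ and $\cc(\lambda)+\cc(\lambda')=2+c_1-c_3+2(c_0-c_4)$ are correct. However, substituting \eqref{eq:charge-map} gives
\[
c_1-c_3+2c_0-2c_4=a_0+a_1+a_2+a_3+6a_4=1+5a_4,
\]
not $2$. In cell counts, $\cc(\lambda)+\cc(\lambda')=2+5R_0(\lambda)-|\lambda|$. When $|\lambda|\equiv4\pmod5$ this is only $\equiv3\pmod5$, and it is not constant. For instance, $(2,2)$ is self-conjugate with $\cc=4$, so the sum is $8$. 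By contrast, $(4)$ and $(1,1,1,1)$ have cranks $3$ and $0$, so their sum is $3$. Conjugation therefore gives only a symmetry of crank residues modulo five, not the exact integer reciprocity $K_{n,j}(z)=z^4K_{n,j}(z^{-1})$.

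\textbf{The lattice route you set aside works, and it is the paper's proof.} First correct the arithmetic:
\[
L(a)+L(\tau a)=3(a_0+a_1+a_2+a_3)+8a_4=3+5a_4,
\]
which is the same defect that conjugation produces. Since $(\tau a)_4=a_4$, \eqref{eq:rotation-QL} gives
\[
L(\rho\tau a)=L(\tau a)+1-5a_4=4-L(a).
\]
Here $\rho\tau a=(a_4,a_3,a_2,a_1,a_0)$ is the full reversal, which is your $\rho^4\tau'$. Directly, $L(a)+L(\rho\tau a)=4\sum_ia_i=4$.

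This reversal preserves $\A$. It preserves $Q=\tfrac12\sum_i(a_i-a_{i+1})^2$, because it maps cyclic edges to cyclic edges. It also permutes the edge signs $e_i$, so $\Phi_{\rho\tau a}=\Phi_a$ by \eqref{eq:three-weights}. Reindexing \eqref{eq:Phi-Psi} then yields $\Psi(u,z,q)=z^4\Psi(u,z^{-1},q)$, and hence the identity for $K_{n,j}$. No partition-level bijection is needed.

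Your derivations of $H_{n,j}(z)=H_{n,j}(z^{-1})$ and of \eqref{eq:total-mass} from the $K$-identity are fine.
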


\begin{proof}
The reflection $\iota(a_0,\ldots,a_4)=(a_4,\ldots,a_0)$
preserves $\A,Q$ and the edge-sign multiset, while
$L(\iota a)=4-L(a)$. Reindexing \eqref{eq:Phi-Psi} therefore gives
$\Psi(u,z,q)=z^4\Psi(u,z^{-1},q)$ and the identity for $K$.
Using $[5]_{z^{-1}}=z^{-4}[5]_z$ gives the identity for $H$;
evaluating $K_{n,j}=[5]_zH_{n,j}$ at $z=1$ gives \eqref{eq:total-mass}.
\end{proof}

The two-core enumeration \eqref{eq:bg-pairs} also determines the support.

\begin{corollary}\label{cor:admissible}
The polynomial $H_{n,j}$ is nonzero exactly when
\begin{equation}\label{eq:BG-support}
 j\equiv n\pmod2,\qquad j(2j-1)\le5n+4.
\end{equation}
With the convention in~\eqref{eq:bg-pairs},
\begin{equation}\label{eq:mass-pairs}
 H_{n,j}(1)=\frac15p^{(2)}\!\left(\frac{5n+4-j(2j-1)}2\right).
\end{equation}
\end{corollary}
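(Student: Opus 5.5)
The plan is to combine the total-mass formula \eqref{eq:total-mass} with the two-core enumeration \eqref{eq:bg-pairs}, and then to use the positivity of Theorem~\ref{thm:main} to pass from ``$H_{n,j}(1)\ne0$'' to ``$H_{n,j}\ne0$''.

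First, Corollary~\ref{cor:reciprocity} gives $H_{n,j}(1)=p_j(5n+4)/5$. Substituting $N=5n+4$ in \eqref{eq:bg-pairs} yields
\[
 p_j(5n+4)=p^{(2)}\!\left(\frac{5n+4-j(2j-1)}2\right),
\]
which proves \eqref{eq:mass-pairs} directly. The convention that $p^{(2)}(m)=0$ for $m\notin\Nzero$ covers both noninteger and negative arguments.

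Next comes the support. Since $H_{n,j}\in\Nzero[z,z^{-1}]$ by Theorem~\ref{thm:main}, a nonzero $H_{n,j}$ has at least one positive coefficient, so $H_{n,j}(1)>0$. Conversely, $H_{n,j}=0$ forces $H_{n,j}(1)=0$. Hence $H_{n,j}\neq0$ if and only if $p^{(2)}(m)>0$, where $m=(5n+4-j(2j-1))/2$. Because $p^{(2)}(m)\ge1$ for every $m\in\Nzero$ (take the pair of empty partitions when $m=0$, or $((m),\varnothing)$ otherwise), this happens exactly when $m$ is a nonnegative integer.

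The step needing care, though elementary, is translating integrality of $m$ into the stated congruence. Since $j(2j-1)\equiv j\pmod2$ (as $2j^2$ is even) and $5n+4\equiv n\pmod2$, we get
\[
 5n+4-j(2j-1)\equiv n-j\pmod2.
\]
So $m\in\Z$ exactly when $j\equiv n\pmod2$, and $m\ge0$ exactly when $j(2j-1)\le5n+4$. Together these give \eqref{eq:BG-support}.

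There is no real obstacle beyond checking this parity reduction. Positivity from Theorem~\ref{thm:main} is what makes evaluation at $z=1$ detect nonvanishing, since a merely integral quotient could be nonzero while vanishing at $z=1$.
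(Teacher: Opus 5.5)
Your proposal is correct and matches the paper's proof: combine the total-mass formula \eqref{eq:total-mass} with the two-core count \eqref{eq:bg-pairs}, observe that the argument of $p^{(2)}$ lies in $\Nzero$ exactly under the parity and size conditions, and use the nonnegativity of $H_{n,j}$ so that vanishing at $z=1$ detects $H_{n,j}=0$. One small aside: Theorem~\ref{thm:main} is not actually needed for that last step. Since $K_{n,j}=[5]_zH_{n,j}$ and $K_{n,j}$ trivially has nonnegative coefficients, $H_{n,j}\ne0$ already forces $K_{n,j}\ne0$, hence $K_{n,j}(1)>0$, hence $H_{n,j}(1)>0$.
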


\begin{proof}
Combine \eqref{eq:total-mass} with \eqref{eq:bg-pairs}.
For $m\in\Nzero$, $p^{(2)}(m)>0$, while it vanishes otherwise.
Its argument lies in $\Nzero$ exactly under \eqref{eq:BG-support}.
Since $H_{n,j}$ has nonnegative coefficients, it vanishes exactly
when $H_{n,j}(1)=0$.
\end{proof}

\subsection{Affine symmetries from Fayers' action}\label{subsec:affine-symmetries}

We use Fayers' level-$t$ action at $(s,t)=(2,5)$
\cite[Section~3 and Lemma~3.1]{Fayers14};
see also~\cite[Section~5.1]{Fayers23}.
The infinite dihedral group
$W=\langle w_0,w_1\mid w_0^2=w_1^2=1\rangle$
(of type $\widetilde A_1$) acts on charge-zero beta-sets by
$\mathcal B_{w_i\lambda}=w_i(\mathcal B_\lambda)$, where
\begin{equation}\label{eq:fayers-reflections}
 \begin{aligned}
 w_0(x)&=\begin{cases}x-5,&x\text{ even},\\x+5,&x\text{ odd},\end{cases}
 &w_1(x)&=\begin{cases}x+5,&x\text{ even},\\x-5,&x\text{ odd}.\end{cases}
 \end{aligned}
\end{equation}
These are the maps of~\cite[Section~3]{Fayers14} with
$(s-1)(t-1)/2=2$.

\begin{lemma}[Fayers' action, specialized]\label{lem:fayers-specialization}
The maps in~\eqref{eq:fayers-reflections} induce bijections on
partitions, preserve the five-core and integer crank, and
interchange the two-quotient components. For $j=\BG(\lambda)$,
\begin{equation}\label{eq:fayers-bg-reflections}
 \BG(w_0\lambda)=3-j,\qquad
 \BG(w_1\lambda)=-2-j.
\end{equation}
With composition read from right to left, $\tau=w_0w_1$ satisfies
\begin{equation}\label{eq:fayers-position-translation}
 \tau^k(x)=x+10k(-1)^x\qquad(k\in\Z).
\end{equation}
\end{lemma}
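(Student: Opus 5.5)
I will work entirely in beta-set coordinates. The maps $w_0,w_1$ are involutions of $\Z$. Each one permutes the residue classes modulo ten: it exchanges residues $r$ and $r+5$ for even $r$ (respectively odd $r$), up to a shift of level. Hence each $w_i$ sends a set differing finitely from $\Z_{<0}$ to another such set.

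\textbf{Step 1 (charge and bijection).} I first show that $\operatorname{ch}$ is preserved. Fix $w_0$, which fixes no position. On the ten-runners, $w_0$ moves even positions down by five and odd positions up by five. Write $d_0,\ldots,d_9$ for the ten-runner charges as in \eqref{eq:ten-five-charges}. An even position $x\equiv r$ is sent to $x-5\equiv r+5 \pmod{10}$ at the same or a lower level. So $w_0$ sends the ten-runner charges $(d_r,d_{r+5})$ for even $r$ to $(d_{r+5}+\epsilon,\,d_r-\epsilon)$ with an explicit constant $\epsilon\in\{0,\pm1\}$. The total charge $\sum_r d_r$ is therefore unchanged and stays zero. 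Since $w_i$ is an involution on $\Z$, the induced map on charge-zero beta-sets is an involution, hence a bijection of partitions.

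\textbf{Step 2 (five-core and crank).} Next I show that the five-runner data are preserved. Every $x$ satisfies $w_i(x)\equiv x\pmod 5$, so $w_i$ moves beads within each five-runner and only rearranges the two ten-subrunners of that five-runner. The five-runner charges $c_r=d_r+d_{r+5}$ are therefore unchanged, since the $\epsilon$ shifts cancel in the sum. This gives the same five-core and, by \eqref{eq:integer-crank}, the same integer crank $1+c_1+c_2-2c_4$.

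Inside five-runner $r$, the two ten-subrunners are exactly the even-level and odd-level beads of that runner. These two subrunners make up the two-quotient of the corresponding five-quotient component $\mu_r$. Swapping them with a level shift of one interchanges the two-quotient components of each $\mu_r$, which is the claimed interchange; I will check this against Fayers' Lemma~3.1.

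\textbf{Step 3 (BG-rank).} This is the computational core. By \eqref{eq:ten-bg}, $\BG=\sum_{r\text{ even}}d_r$. Under $w_0$ each even $d_r$ becomes $d_{r+5}+\epsilon_r$, so the new BG-rank is
\[
\sum_{r\text{ odd}}d_r+\sum_{r\text{ even}}\epsilon_r=-j+\sum_{r\text{ even}}\epsilon_r,
\]
using $\sum_r d_r=0$. The remaining task is to compute the constants $\epsilon_r$ carefully with the level convention (position $r+10k$ has level $k$ on ten-runner $r$). I expect $\sum_{r\text{ even}}\epsilon_r=3$ for $w_0$ and $-2$ for $w_1$. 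As a check, the empty partition has beta-set $\Z_{<0}$. The map $w_0$ moves $-2$ and $-4$ to $-7$ and $-9$, and moves $-1,-3,-5$ to $4,2,0$. So the image contains the three new nonnegative positions $0,2,4$ and is missing the two negative positions $-1,-3$ (the positions $-5,-7,-9$ are refilled by images of $0,-2,-4$). I will verify that this gives BG-rank $3$ directly from \eqref{eq:beta-bg}. The sign bookkeeping of $\epsilon_r$ at level zero is the main obstacle, so I will do it by this empty-partition normalization plus linearity of the displacement in $d$.

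\textbf{Step 4 (translation).} Finally, $w_0w_1(x)$ is computed directly. If $x$ is even, $w_1(x)=x+5$ is odd, and then $w_0$ adds five, giving $x+10$. If $x$ is odd, $w_1(x)=x-5$ is even, and then $w_0$ subtracts five, giving $x-10$. Both translations preserve parity, so induction on $k$ gives \eqref{eq:fayers-position-translation}, and negative $k$ follows by inverting.
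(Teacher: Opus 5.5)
Your ten-runner bookkeeping does prove the bijection, the preservation of the five-core and crank (via $c_r=d_r+d_{r+5}$), and the translation formula. The constants you anticipate are also correct. For $w_0$ one finds $\epsilon_r=1,1,1,0,0$ at $r=0,2,4,6,8$: an even position $r+10k$ with $r<5$ lands at level $k-1$ of runner $r+5$, while for $r>5$ the level is unchanged. For $w_1$ one finds $0,0,0,-1,-1$.

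\textbf{The gap is in the interchange claim.} In the lemma, ``the two-quotient'' means the $2$-quotient of $\lambda$ itself, read from the runners $E=\{a:2a\in\mathcal B_\lambda\}$ and $O=\{a:2a+1\in\mathcal B_\lambda\}$. Corollaries~\ref{cor:bg-translation} and~\ref{cor:bg-reflections} use exactly this to compute $|w\lambda|-|\lambda|$ from the two-core sizes $j(2j-1)$. You prove instead that the two-quotient of each five-quotient component $\mu_r$ is swapped, i.e.\ that ten-quotient components $r$ and $r+5$ are exchanged. That is true, but it is a different statement. The $\mu_r$ themselves change: their two-cores move, as \eqref{eq:bg-decomposition} forces once the BG-rank changes. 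Nothing in your argument shows that the two components of the $2$-quotient of $\lambda$ are exchanged, or controls the size change.

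\textbf{The missing step} is one line, and it makes the $\epsilon_r$ computation unnecessary. One has $w_0(2a)=2(a-3)+1$ and $w_0(2a+1)=2(a+3)$, so $(E,O)\mapsto(O+3,E-3)$. Similarly $w_1$ gives $(E,O)\mapsto(O-2,E+2)$.
\begin{itemize}
\item Translating a runner does not change its quotient partition, so the $2$-quotient components of $\lambda$ are interchanged.
\item By \eqref{eq:ten-bg}, $\BG(\lambda)=\sum_{r\text{ even}}d_r=\operatorname{ch}(E)$, and $\operatorname{ch}(B+h)=\operatorname{ch}(B)+h$. Hence $\BG(w_0\lambda)=3-j$ and $\BG(w_1\lambda)=-2-j$ at once.
\end{itemize}
Your varying $\epsilon_r$ are just the wrap-around of this single shift by $3$ (resp.\ $-2$), seen on the ten-runner abacus. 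This is the paper's argument. For the five-core it notes, equivalently to your Step~2, that on five-runner $r$ the map $a\mapsto a\mp(-1)^{r+a}$ swaps adjacent levels.

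\textbf{A slip in your empty-partition check.} Since $0\notin\Z_{<0}$, the position $-5$ is not refilled. The image of $\Z_{<0}$ under $w_0$ contains $0,2,4$ and misses $-1,-3,-5$. It is the beta-set of $(5,4,3,2,1)$, which indeed has BG-rank $3$. As you wrote it, the set would have charge $1$, contradicting your Step~1.
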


\begin{proof}
The two runner level sets
$E=\{a:2a\in\mathcal B_\lambda\}$ and
$O=\{a:2a+1\in\mathcal B_\lambda\}$ have charges $(j,-j)$.
Writing $B+h=\{a+h:a\in B\}$, \eqref{eq:fayers-reflections} gives
\[
 (E,O)\xrightarrow{\ w_0\ }(O+3,E-3),\qquad
 (E,O)\xrightarrow{\ w_1\ }(O-2,E+2).
\]
Since $\operatorname{ch}(B+h)=\operatorname{ch}(B)+h$,
the new charges are $(3-j,j-3)$ and $(-2-j,j+2)$.
They sum to zero, so define partitions. Both maps are involutions,
and runner translation leaves the quotient partition unchanged,
proving the interchange and \eqref{eq:fayers-bg-reflections}.

On five-runner $r$, write $x=r+5a$. The induced maps
$a\mapsto a\mp(-1)^{r+a}$ swap adjacent pairs of levels.
Only finitely many swaps change occupancy; each preserves charge,
including across zero. Thus the five-core and the integer crank
$1+c_1+c_2-2c_4$ are preserved. Finally,
$w_0w_1(x)=x+10(-1)^x$ preserves parity, giving
\eqref{eq:fayers-position-translation} for every integer iterate.
\end{proof}

\begin{corollary}[Translation consequence of Fayers' action]\label{cor:bg-translation}
Write $j=r+5k$, where $r\in\{-2,-1,0,1,2\}$ and $k\in\Z$, and set
\begin{equation}\label{eq:bg-shift}
 \Delta=10k^2+(4r-1)k
        =\frac{j(2j-1)-r(2r-1)}5.
\end{equation}
Then $\Delta\in\Nzero$, and for all $n\in\Nzero$,
\begin{equation}\label{eq:five-families}
 K_{n,j}(z)=K_{n-\Delta,r}(z),\qquad
 H_{n,j}(z)=H_{n-\Delta,r}(z),
\end{equation}
with negative first indices interpreted as zero.
The first identity is induced by $\tau^k$, preserving the five-core
and every integer crank exponent.
\end{corollary}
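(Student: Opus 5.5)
The plan is to realize the translation as the iterate $\tau^k$ of Lemma~\ref{lem:fayers-specialization}, and to read off $\Delta$ from how $\tau^k$ changes the size. First I will verify the arithmetic claims about $\Delta$. Substituting $j=r+5k$ gives
\[
 j(2j-1)-r(2r-1)=20rk+50k^2-5k=5\bigl(10k^2+(4r-1)k\bigr),
\]
which is the second expression in \eqref{eq:bg-shift}. For nonnegativity, $k=0$ gives $\Delta=0$. When $k\ne0$,
\[
 10k^2+(4r-1)k\ge10k^2-9|k|>0,
\]
because $|4r-1|\le9$ for $r\in\{-2,\dots,2\}$. Hence $\Delta\in\Nzero$.

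Next I will compute the action of $\tau^k$ on the two-runner data. Composing the two runner maps from the proof of Lemma~\ref{lem:fayers-specialization}, $w_1$ followed by $w_0$, the pair $(E,O)$ is sent to $(E+5,O-5)$. Therefore $\tau^k$ sends $(E,O)$ to $(E+5k,O-5k)$. This leaves both two-quotient partitions fixed and changes the two-core charge from $(r,-r)$ to $(r+5k,-r-5k)$. By \eqref{eq:beta-bg}, the BG-rank is the even-runner charge, so $\BG$ moves from $r$ to $j$.

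For the size, I will use \eqref{eq:core-size} with $h=2$. A partition with two-core charge $(m,-m)$ has size $m(2m-1)+2(|\nu_0|+|\nu_1|)$. Fixing the quotient, the size therefore changes by $j(2j-1)-r(2r-1)=5\Delta$, so $\tau^k$ maps partitions of $5(n-\Delta)+4$ to partitions of $5n+4$. Since $\tau^k$ is invertible, this gives a bijection
\[
 \{\lambda\vdash5(n-\Delta)+4:\BG(\lambda)=r\}\longrightarrow\{\lambda\vdash5n+4:\BG(\lambda)=j\}.
\]
I will check that it has no leftover or missing elements: every partition of $5n+4$ with BG-rank $j$ has the form $\tau^k\mu$ with $\BG(\mu)=r$, and $|\mu|=5n+4-5\Delta$ is negative exactly when no such partition exists. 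That matches the convention that negative first indices give zero.

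Lemma~\ref{lem:fayers-specialization} states that $\tau^k$ preserves the five-core and the integer crank $1+c_1+c_2-2c_4$, so every crank exponent is carried over unchanged. This gives $K_{n,j}=K_{n-\Delta,r}$, and dividing both sides by $[5]_z$ gives the identity for $H$. The step I expect to need the most care is confirming that the two-runner translation is exactly $\pm5k$ with the correct sign convention, so that BG moves from $r$ to $j$ and not to $-j$. I will check this against the explicit formulas $(O+3,E-3)$ and $(O-2,E+2)$ from that lemma.
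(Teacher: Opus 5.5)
Your proposal is correct and follows essentially the same route as the paper. The shared steps are these: $\tau^k$ fixes the two-quotient and translates the even and odd runner level sets by $\pm5k$, so the BG-rank moves from $r$ to $j$; the size changes by $j(2j-1)-r(2r-1)=5\Delta$; and $\tau^k$ preserves the five-core and the integer crank, giving a crank-preserving bijection with inverse $\tau^{-k}$ that also justifies the zero convention. Your nonnegativity argument via $|4r-1|\le9$ matches the paper's. Your explicit check that $w_0w_1$ sends $(E,O)$ to $(E+5,O-5)$ is a slightly more detailed verification of the sign convention than the paper writes out.
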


\begin{proof}
The map $\tau^k$ preserves the two-quotient and shifts BG-rank
by $5k$. Since the two-core of rank $j$ has size $C(j)=j(2j-1)$,
for $\BG(\lambda)=r$ one has
\[
 |\tau^k\lambda|-|\lambda|
   =C(r+5k)-C(r)=5\{10k^2+(4r-1)k\}=5\Delta.
\]
The crank-preserving bijection $\tau^k$, with inverse $\tau^{-k}$,
proves the identity for $K$; division by $[5]_z$ gives that for $H$.
Also $|4r-1|\le9$ implies $\Delta\ge10k^2-9|k|\ge0$.
If $n-\Delta<0$, an inverse image would have negative size,
justifying the zero convention.
\end{proof}

The reflections reduce these five representatives to three.

\begin{corollary}[Reflection consequences and three representatives]\label{cor:bg-reflections}
For every $n\in\Nzero$ and $j\in\Z$,
\begin{equation}\label{eq:bg-reflection-distributions}
 \begin{aligned}
 K_{n,j}(z)&=K_{n+3-2j,\,3-j}(z),\\
 K_{n,j}(z)&=K_{n+2j+2,\,-2-j}(z).
 \end{aligned}
\end{equation}
The same identities hold for $H$, with negative first indices
interpreted as zero. In particular,
\begin{equation}\label{eq:bg-three-representatives}
 K_{n,2}=K_{n-1,1},\qquad K_{n,-2}=K_{n-2,0},
\end{equation}
and similarly for $H$. Together with Corollary~\ref{cor:bg-translation},
these reduce all BG-ranks to $-1,0,1$.
\end{corollary}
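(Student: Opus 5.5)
The two reflection identities will come from the Fayers reflections $w_0,w_1$ of Lemma~\ref{lem:fayers-specialization}, combined with a size computation that mirrors the proof of Corollary~\ref{cor:bg-translation}. Each $w_i$ is an involutive bijection on partitions that preserves the five-core and the integer crank $\cc$. It changes BG-rank by \eqref{eq:fayers-bg-reflections}: $j\mapsto 3-j$ under $w_0$ and $j\mapsto -2-j$ under $w_1$. It also interchanges the two-quotient components, so the total two-quotient size is unchanged. By the two-core description in \eqref{eq:P-def}--\eqref{eq:bg-pairs}, a partition of BG-rank $j$ has size $C(j)+2m$, where $C(j)=j(2j-1)$ and $m$ is the total two-quotient size. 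Hence
\[
 |w_i\lambda|-|\lambda|=C(j')-C(j),
\]
where $j'$ is the new BG-rank.

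The sizes are then a direct computation. For $w_0$ we have $C(3-j)-C(j)=(3-j)(5-2j)-j(2j-1)=15-10j$, so $5n+4\mapsto 5(n+3-2j)+4$. For $w_1$ we have $C(-2-j)-C(j)=(2+j)(5+2j)-j(2j-1)=10j+10$, so $5n+4\mapsto5(n+2j+2)+4$. Since $w_i$ preserves $\cc$ (both the five-core and the crank $1+c_1+c_2-2c_4$ are unchanged), it restricts to a bijection from $\{\lambda\vdash5n+4:\BG=j\}$ onto $\{\lambda\vdash5n'+4:\BG=j'\}$, because $w_i$ is an involution and the inverse computation is the same. This gives \eqref{eq:bg-reflection-distributions}.

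If the target index $n'$ is negative, the source class is empty, since otherwise its image would have negative size; this justifies the zero convention, exactly as in Corollary~\ref{cor:bg-translation}. Dividing by $[5]_z$ gives the same identities for $H$.

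For \eqref{eq:bg-three-representatives}, I apply the first identity with $j=1$, which gives $K_{n,1}=K_{n+1,2}$, that is, $K_{n,2}=K_{n-1,1}$. I apply the second with $j=0$, which gives $K_{n,0}=K_{n+2,-2}$, that is, $K_{n,-2}=K_{n-2,0}$. I must check that the shifted form is valid for small $n$ as well. For $n=0$, the class with $j=2$ has size $4$ and needs $C(2)=6\le4$, which fails, so it is empty, consistent with the convention. For $j=-2$, $C(-2)=10>5n+4$ for $n\le1$, so these classes are empty too. Finally, Corollary~\ref{cor:bg-translation} reduces every $j$ to a residue $r\in\{-2,\dots,2\}$, and these two identities send $r=\pm2$ to $1$ and $0$.

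The main obstacle is only bookkeeping: the two-quotient size must be genuinely preserved, which needs the interchange statement of Lemma~\ref{lem:fayers-specialization}, and the edge cases with negative indices must be handled. No new positivity input is needed.
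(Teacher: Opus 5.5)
Your proposal is correct and follows the paper's proof. In both arguments the Fayers involutions $w_0,w_1$ preserve two-quotient size and the integer crank while sending $j$ to $3-j$ and $-2-j$, so the size shifts are $C(3-j)-C(j)=5(3-2j)$ and $C(-2-j)-C(j)=5(2j+2)$, and a negative target index forces the source class to be empty.

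The only cosmetic difference is in deriving \eqref{eq:bg-three-representatives}. The paper substitutes $j=2$ and $j=-2$ directly, so the zero convention already covers small $n$. You substitute $j=1$ and $j=0$ and then reindex, which requires the separate small-$n$ emptiness checks; you supply these correctly.
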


\begin{proof}
The involutions preserve two-quotient size and integer crank, while
\[
 C(3-j)-C(j)=5(3-2j),\qquad
 C(-2-j)-C(j)=5(2j+2),\qquad C(j)=j(2j-1).
\]
This proves \eqref{eq:bg-reflection-distributions}; a negative
target size forces the source class to be empty.
Division by $[5]_z$ gives the identities for $H$.
Taking $j=2$ and $j=-2$, respectively, gives
\eqref{eq:bg-three-representatives}.
\end{proof}

\subsection{Cumulative crank inequalities}

For $k\in\Z$, put
\[
 M_{n,j}(k)=\#\{\lambda\vdash5n+4:
                 \BG(\lambda)=j,\ \cc(\lambda)=k\},
 \qquad h_{n,j}(k)=[z^k]H_{n,j}(z).
\]

\begin{corollary}\label{cor:cumulative}
For $n\in\Nzero$ and $j,k\in\Z$,
\begin{equation}\label{eq:cumulative}
 h_{n,j}(k)=\sum_{r\ge0}
       \bigl(M_{n,j}(k-5r)-M_{n,j}(k-5r-1)\bigr)\ge0,
\end{equation}
where the sum is finite.
\end{corollary}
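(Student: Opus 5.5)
The plan is to read off the identity directly from $K_{n,j}(z)=[5]_zH_{n,j}(z)$ (Theorem~\ref{thm:main}), and then use the nonnegativity of $H_{n,j}$ proved there. By definition $M_{n,j}(k)=[z^k]K_{n,j}(z)$, and $K_{n,j}$ is a finite Laurent polynomial. Rewrite $[5]_z=(1-z^5)/(1-z)$. This gives
\[
 (1-z^5)H_{n,j}(z)=(1-z)K_{n,j}(z).
\]
The identity holds in $\Z[z,z^{-1}]$ because $[5]_z(1-z)=1-z^5$.

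Next, compare the coefficient of $z^m$ on both sides. This gives the recursion
\[
 h_{n,j}(m)-h_{n,j}(m-5)=M_{n,j}(m)-M_{n,j}(m-1).
\]
Put $m=k-5r$ and sum over $0\le r\le R$. The left side telescopes to $h_{n,j}(k)-h_{n,j}(k-5R-5)$. Since $H_{n,j}$ has finite Laurent support, $h_{n,j}(k-5R-5)=0$ once $R$ is large. The right-hand summands also vanish for large $r$, because $K_{n,j}$ has finite support. Hence the sum in \eqref{eq:cumulative} is finite and equals $h_{n,j}(k)$.

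Equivalently, one can expand $1/(1-z^5)=\sum_{r\ge0}z^{5r}$ in $\Z((z^{-1}))$ and note that the product with the polynomial $(1-z)K_{n,j}$ is the unique Laurent expansion of $H_{n,j}$. The telescoping argument avoids this issue of choosing a completion, so I would present that one.

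Finally, nonnegativity is exactly Theorem~\ref{thm:main}: $h_{n,j}(k)\in\Nzero$. The only point requiring care is finiteness of supports in the telescoping step, which is immediate from finiteness of the partition classes. So there is no real obstacle; the corollary is a coefficientwise restatement of Theorem~\ref{thm:main}.
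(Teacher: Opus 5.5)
Your argument is correct and is essentially the paper's proof: both derive $h_{n,j}(k)-h_{n,j}(k-5)=M_{n,j}(k)-M_{n,j}(k-1)$ from $(1-z)K_{n,j}=(1-z^5)H_{n,j}$, telescope toward negative exponents using finite Laurent support, and take the sign from Theorem~\ref{thm:main}. One small slip is in your discarded aside: the expansion $\sum_{r\ge0}z^{5r}$ of $1/(1-z^5)$ lives in $\Z((z))$, not $\Z((z^{-1}))$, where the expansion would instead be $-\sum_{r\ge1}z^{-5r}$. This does not affect the telescoping proof you actually give.
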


\begin{proof}
The identity $(1-z)K_{n,j}=(1-z^5)H_{n,j}$ gives
$h_{n,j}(k)-h_{n,j}(k-5)=M_{n,j}(k)-M_{n,j}(k-1)$.
Iterating towards negative indices terminates by finite Laurent
support, proving the equality; Theorem~\ref{thm:main} gives the inequality.
\end{proof}

Summing over $j$ gives the unrestricted positivity assertion
attributed in~\cite{Stanton} to Bringmann, Ono, and Rolen.

\subsection{Nine-color convolution and finite differences}

For $m\in\Nzero$, let $p^{(9)}(m)$ count ordered nine-tuples of
partitions of total size $m$, so $\sum_{m\ge0}p^{(9)}(m)x^m=(x;x)_\infty^{-9}$.
Set $G_{d,j}(z)=[q^du^j]\GG_9(u,z,q)$; these Laurent polynomials
are nonnegative and vanish for $d\le0$.

\begin{corollary}\label{cor:convolution}
For $n\in\Nzero$ and $j\in\Z$,
\begin{equation}\label{eq:convolution}
 H_{n,j}(z)=\sum_{m=0}^{\lfloor n/2\rfloor}
                    p^{(9)}(m)G_{n+1-2m,j}(z).
\end{equation}
\end{corollary}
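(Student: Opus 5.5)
The plan is to extract the coefficient of $q^{n+1}u^j$ from the factorization
\[
 \frac{\Psi(u,z,q)}{[5]_z}=\frac{\GG_9(u,z,q)}{D(q)^9},
\]
which is the definition \eqref{eq:G9-def} rearranged. Since $D(q)^9$ is a unit in the formal-series ring, the identity holds in $\Z[u,u^{-1},z,z^{-1}][[q]]$.

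First I will expand the inverse Euler product as
\[
 D(q)^{-9}=(q^2;q^2)_\infty^{-9}=\sum_{m\ge0}p^{(9)}(m)\,q^{2m},
\]
which is the definition of $p^{(9)}$ with $x=q^2$.

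Next I will use \eqref{eq:H-generating} to read off
\[
 [q^{n+1}u^j]\,\frac{\Psi}{[5]_z}=H_{n,j}(z).
\]
On the other side, the coefficient of $q^{n+1}u^j$ in the product is $\sum_{m\ge0}p^{(9)}(m)\,[q^{n+1-2m}u^j]\GG_9$. The $u$-extraction passes through the product because $D^{-9}$ involves no $u$.

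The range of summation comes from the vanishing of $G_{d,j}$ for $d\le0$. Indeed, $\Psi$ starts at $q^1$, so $\GG_9=D^9\Psi/[5]_z$ has no terms of $q$-degree at most zero. The terms with $n+1-2m\le0$, that is $m>\lfloor n/2\rfloor$, therefore drop out, and what remains is \eqref{eq:convolution}.

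The only points needing care are well-definedness and finiteness. Each $q$-coefficient is a finite Laurent polynomial, because $\GG_9$ lies in $\Nzero[u^{\pm1},z^{\pm1}][[q]]$ by Theorem~\ref{thm:nine-factors}, and the convolution in $q$ involves finitely many terms at each degree. None of this is a real obstacle; the substantive input is the nonnegativity of $G_{d,j}$, already supplied by Theorem~\ref{thm:nine-factors}, which makes \eqref{eq:convolution} a positive convolution.
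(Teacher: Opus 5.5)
Your proposal is correct and follows the paper's proof: expand $D(q)^{-9}=\sum_{m\ge0}p^{(9)}(m)q^{2m}$ in $\Psi/[5]_z=\GG_9/D(q)^9$, then extract $[q^{n+1}u^j]$ using \eqref{eq:H-generating}. The range of summation is cut off because $G_{d,j}=0$ for $d\le0$, which you justify correctly from the fact that $\Psi$ starts at $q^1$.
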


\begin{proof}
Expand $D(q)^{-9}=\sum_{m\ge0}p^{(9)}(m)q^{2m}$ in
\eqref{eq:G9-def} and compare coefficients, using $G_{d,j}=0$ for $d\le0$.
\end{proof}

\begin{corollary}\label{cor:finite-differences}
Set $H_{n,j}=0$ for $n<0$. For $n\in\Nzero$, $j\in\Z$,
$d\ge1$, and $0\le r\le9$, with $d,r$ integral,
\begin{equation}\label{eq:finite-differences}
 \sum_{v=0}^r(-1)^v\binom rv H_{n-2dv,j}(z)
                    \in\Nzero[z,z^{-1}].
\end{equation}
More generally, for any integer sequence $(r_d)_{d\ge1}$
with $0\le r_d\le9$, including infinite support,
\begin{equation}\label{eq:mixed-differences}
 \prod_{d\ge1}(1-q^{2d})^{r_d}
       \frac{\Psi(u,z,q)}{[5]_z}
       \in\Nzero[u,u^{-1},z,z^{-1}][[q]].
\end{equation}
\end{corollary}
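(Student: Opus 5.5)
The plan is to deduce both assertions from Theorem~\ref{thm:nine-factors} by rewriting each left-hand side as $\GG_9$ times a product of series with nonnegative coefficients. The central observation is elementary. For $0\le r\le9$ we have
\[
 (1-q^{2d})^{r}D(q)^{-9}=(1-q^{2d})^{r-9}\prod_{m\ne d}(1-q^{2m})^{-9},
\]
and every factor on the right has nonnegative coefficients, because $r-9\le0$. The same holds for mixed exponents:
\[
 \prod_{d\ge1}(1-q^{2d})^{r_d}D(q)^{-9}
 =\prod_{d\ge1}(1-q^{2d})^{-(9-r_d)},
\]
which is a product of Euler factors with nonnegative integer exponents $9-r_d$. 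This infinite product converges formally, since only finitely many factors are nontrivial below any fixed $q$-degree, and it has nonnegative coefficients. It counts partitions into even parts, with $9-r_d$ colors available at part size $2d$.

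First I would prove \eqref{eq:mixed-differences}. By \eqref{eq:G9-def} and $\Psi/[5]_z=\GG_9/D^9$, the left side equals
\[
 \GG_9(u,z,q)\prod_{d\ge1}(1-q^{2d})^{-(9-r_d)}.
\]
This is a product of two series in $\Nzero[u^{\pm1},z^{\pm1}][[q]]$. Each $q$-coefficient of the product is a finite sum: $\GG_9$ has finite Laurent support in each degree, and the scalar factor has constant term one. Hence the product is nonnegative. The only point needing care is local finiteness when the sequence $(r_d)$ has infinite support. This is handled by truncating at $d\le N$ when computing coefficients through $q^{N}$, since the factors with $d>N/2$ equal $1+O(q^{N+1})$.

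Next I would obtain \eqref{eq:finite-differences} by specializing to $r_{d}=r$ at one fixed $d$ and $r_{d'}=0$ otherwise. The series $(1-q^{2d})^r\Psi/[5]_z$ expands as
\[
 \sum_{v=0}^{r}(-1)^v\binom rv q^{2dv}\sum_{n}q^{n+1}\sum_ju^jH_{n,j}(z),
\]
using \eqref{eq:H-generating}. Extracting $[q^{n+1}u^j]$ gives exactly the alternating sum in the statement, with the convention $H_{m,j}=0$ for $m<0$. That convention matches the fact that $\Psi$ has no $q$-terms of degree $\le0$.

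I do not expect a real obstacle here. The step needing the most bookkeeping is to justify the formal infinite product and the finiteness of coefficient extraction for infinitely supported $(r_d)$, and I would handle it by the degree-truncation remark above.
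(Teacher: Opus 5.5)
Your proposal is correct and follows essentially the same route as the paper: write the left side of \eqref{eq:mixed-differences} as $\GG_9\prod_{d\ge1}(1-q^{2d})^{-(9-r_d)}$, note that each factor is nonnegative and that factors with $2d>N$ are $1$ modulo $q^{N+1}$, then specialize to a single $d$ and extract $[q^{n+1}u^j]$. Your bookkeeping of the convention $H_{m,j}=0$ for $m<0$, via the absence of nonpositive $q$-degrees in $\Psi$, matches the paper's reasoning.
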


\begin{proof}
By \eqref{eq:G9-def}, the left side of \eqref{eq:mixed-differences} is
\[
 \GG_9(u,z,q)\prod_{d\ge1}(1-q^{2d})^{-(9-r_d)}.
\]
Every factor is nonnegative, including exponent zero.
For fixed degree $N$, factors with $2d>N$ equal $1$ modulo
$q^{N+1}$, so both infinite products are well defined.
Setting $r_d=r$ at one size and zero elsewhere, then extracting
$[q^{n+1}u^j]$, gives \eqref{eq:finite-differences}.
\end{proof}

For example,
\[
 H_{n+2d,j}(z)\ge H_{n,j}(z),\qquad
 H_{n+4d,j}(z)-2H_{n+2d,j}(z)+H_{n,j}(z)\ge0.
\]
These hold at every integer crank exponent and preserve BG parity
because the shifts in $n$ are even.

\subsection{Small cases and the role of quotient partitions}

\begin{example}\label{rem:boundary}
The partitions of $4$ are $(4)$, $(3,1)$, $(2,2)$, $(2,1,1)$, and
$(1,1,1,1)$. They all have BG-rank $0$, and their crank values are
$3,2,4,1,0$, respectively. Thus $H_{0,0}(z)=1$, and all other
$H_{0,j}$ vanish.
\end{example}

\begin{example}\label{ex:cores-only}
The positivity assertion cannot be restricted to the five-cores of a
fixed size. The five cores of size $9$ are
\[
 (6,2,1),\quad(5,2,2),\quad(5,1,1,1,1),\quad
 (3,3,1,1,1),\quad(3,2,1,1,1,1).
\]
They all have BG-rank $1$, and their crank values in this order are
$2,-2,-1,5,6$. Their generating polynomial is therefore
\begin{equation}\label{eq:cores-only}
 z^{-2}+z^{-1}+z^2+z^5+z^6
       =[5]_z(z^{-2}-1+z^2).
\end{equation}
For an algebraic verification that the list is exhaustive,
$Q(a)=2$ means $\sum_i(a_i-a_{i+1})^2=4$. The differences have sum
zero, so they consist of two $1$'s, two $-1$'s, and a zero.
To enumerate them, write $d_i=a_i-a_{i+1}$ and rotate so that $d_4=0$.
Then
$5a_0=1+4d_0+3d_1+2d_2+d_3$.
If $W$ is the sum of the two weights in $\{4,3,2,1\}$ at which
$d_i=1$, integrality requires $1-10+2W\equiv0\pmod5$.
Since $3\le W\le7$, this forces $W=7$ and hence
$(d_0,d_1,d_2,d_3,d_4)=(1,1,-1,-1,0)$.
It gives $a=(1,0,-1,0,1)$. Rotating back gives exactly the five
rotations of $(-1,0,1,1,0)$. Their edge-sign count
is $4$, their core BG-rank is $1$, and their $L$-values are precisely
the five exponents in~\eqref{eq:cores-only}.

For partitions of size $9$ with nonempty five-quotient, the core size
must instead be $4$, and the total quotient size is $1$.
The $Q=1$ vectors are the five coordinate unit vectors. Each has
three positive and two negative quotient signs. Placing the unique
one-cell quotient on a positive runner gives BG-rank $1$; placing
it on a negative runner gives BG-rank $-1$.
The resulting contributions are $3[5]_z$ and $2[5]_z$, respectively.
Combining them with~\eqref{eq:cores-only} gives
\begin{equation}\label{eq:size-nine}
 H_{1,1}(z)=z^{-2}+2+z^2,
 \qquad H_{1,-1}(z)=2.
\end{equation}
The quotient contributions cancel the negative term in
\eqref{eq:cores-only}. The resulting quotient need not be unimodal:
$H_{1,1}$ has coefficient sequence $1,0,2,0,1$ between its extreme
exponents.
\end{example}

\begin{example}\label{ex:fourteen}
Write $Z_r=z^r+z^{-r}$ for $r\ge1$. At size $14$, the nonzero
quotients are
\[
 H_{2,-2}=1,\qquad H_{2,2}=Z_2+2,\qquad
 H_{2,0}=Z_3+3Z_2+Z_1+12.
\]
For a direct calculation, the core vectors with $Q\le3$ have the
following orbits under rotation. The last column sums over all
orbits listed in its row:
\[
\begin{array}{c|c|c}
 Q&\text{orbit representatives}&\text{core crank polynomial}/[5]_z\\\hline
 1&(1,0,0,0,0)&1\\
 2&(-1,0,1,1,0)&Z_2-1\\
 3&(1,1,-1,0,0),\ (-1,1,1,0,0)&Z_3+Z_1-Z_2.
\end{array}
\]
The first two rows were determined above. For $Q=3$, the cyclic
differences have squared norm six and sum zero, hence are a
permutation of $(2,-1,-1,0,0)$ or its negative. Integrality of
$a_0$ leaves the two displayed orbits. Their core BG-rank is zero.
The $Q=1$ weight $\PP_+^3\PP_-^2$ has quotient-degree-two
coefficients $1,16,3$ at BG values $-2,0,2$.
The $Q=2$ weight $u\PP_+\PP_-^4$ has quotient-degree-one
coefficients $4,1$ at BG values $0,2$.
Combining the three rows gives the stated formulas. In particular,
$H_{2,0}-8H_{0,0}=Z_3+3Z_2+Z_1+4\ge0$, as required by
$[q^3u^0]$ in \eqref{eq:strong}.
\end{example}

\subsection{A coefficient interpretation}

The coefficients in \eqref{eq:positive-lattice} count choices of
$b\in\A$ with $b_4>0$, an integer $0\le v<b_4$, theta indices,
and the colored partitions in \eqref{eq:even-positive} or
\eqref{eq:odd-factor}--\eqref{eq:kernel-positive}, with $\ell=2$.
The lattice choice contributes $q^{Q(b)}z^{L(\rho b)+5v}$,
and each colored part of size $d$ contributes $q^{2d}$.
In the odd case, the kernel index $J$ contributes BG exponent
$\delta J$, where $\delta=\sigma_1(Tb)=\sigma_3(Tb)$.
The forbidden colors and all remaining weights are specified
in those formulas. The coordinate bound and quadratic theta
degrees make each coefficient count finite.

Adjoining an ordered eight-tuple of partitions, each part again
contributing twice its size to the $q$-degree, gives $H_{n,j}$
by \eqref{eq:H-generating}. The identity $K_{n,j}=[5]_zH_{n,j}$
groups these coefficients into blocks of five consecutive crank
exponents; it does not give a natural five-cycle on Young diagrams.
The nine-color formula \eqref{eq:convolution} uses the additional
compensation in Section~\ref{sec:nine-factors}. By Proposition~\ref{prop:ten-core},
$[5]_z\GG=\Omega/D^2$ counts a ten-core and an ordered pair of
partitions, each quotient cell adding ten cells to the total size.

\appendix
\section{Rational coefficient identities for the first local inequality}\label{app:l1-certificates}

This appendix gives the rational identities used in
Section~\ref{sec:local-L1}. We first recall the coefficient formula
for each residue class and its starting degree; see
\cite[Chapter~4]{StanleyEC}. We then verify the required identities
by clearing denominators.

\subsection{Residue-class coefficient polynomials}

\begin{lemma}[Residue-class coefficient formula]\label{lem:residue-tail}
Let $r\ge1$, let $d_1,\ldots,d_r$ be positive integers, and let
$L$ be a positive common multiple of them. For $N(x)\in\Z[x]$ put
\begin{equation}\label{eq:residue-clearing}
 F(x)=\frac{N(x)}{\prod_{i=1}^r(1-x^{d_i})},\qquad
 A(x)=N(x)\prod_{i=1}^r
             (1+x^{d_i}+\cdots+x^{L-d_i}).
\end{equation}
For $0\le s<L$, write $a_{s,j}=[x^{s+Lj}]A(x)$ for $j\ge0$. If this
residue class is nonzero, let $J_s$ be the largest $j$ for which
$a_{s,j}\ne0$. Then, for every integer
$m\ge\max\{0,J_s-r+1\}$,
\begin{equation}\label{eq:residue-tail-formula}
 [x^{s+Lm}]F(x)
   =\sum_{j=0}^{J_s}a_{s,j}
                 \binom{m-j+r-1}{r-1}.
\end{equation}
Here $\binom{y}{r-1}=y(y-1)\cdots(y-r+2)/(r-1)!$ is
interpreted as a polynomial, with $\binom y0=1$. Thus the right
side is a polynomial in $m$ of degree at most $r-1$. An empty
residue class has coefficient identically zero. If $A\ne0$, the
uniform cutoff
\begin{equation}\label{eq:residue-tail-cutoff}
 m\ge\max\{0,\lfloor\deg A/L\rfloor-r+1\}
\end{equation}
suffices for all residue classes. No sign hypothesis on $N$ or
$A$ is required.
\end{lemma}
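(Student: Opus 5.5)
The identity $F(x)\prod_i(1-x^{d_i})=N(x)$ can be rewritten by multiplying numerator and denominator by $\prod_i(1+x^{d_i}+\cdots+x^{L-d_i})$. Since $(1-x^{d_i})(1+x^{d_i}+\cdots+x^{L-d_i})=1-x^L$, we get
\[
 F(x)=\frac{A(x)}{(1-x^L)^r}
\]
in $\Z[[x]]$. This is an exact identity of formal power series: $A$ is a polynomial, and $(1-x^L)^{-r}$ is a unit. No sign hypothesis enters.

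Next expand $(1-x^L)^{-r}=\sum_{k\ge0}\binom{k+r-1}{r-1}x^{Lk}$. Because every exponent in this series is a multiple of $L$, the coefficient of $x^{s+Lm}$ in $F$ involves only the coefficients of $A$ in the residue class $s$ modulo $L$. Writing $A$'s contribution in that class as $\sum_{j=0}^{J_s}a_{s,j}x^{s+Lj}$, we obtain
\[
 [x^{s+Lm}]F=\sum_{j=0}^{\min(m,J_s)}a_{s,j}\binom{m-j+r-1}{r-1},
\]
where the binomial is the ordinary count for $m-j\ge0$. If the residue class of $A$ is empty, this sum is zero for every $m$, which gives the empty-class assertion.

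To reach \eqref{eq:residue-tail-formula} we must remove the truncation $\min(m,J_s)$ and read the binomial as a polynomial. For $m\ge J_s$ the truncation is vacuous, and the combinatorial and polynomial binomials agree because $m-j\ge0$. For $J_s-r+1\le m<J_s$ (with $m\ge0$), the extra terms have $j>m$, so $y:=m-j+r-1$ satisfies $0\le y\le r-2$. The polynomial $\binom{y}{r-1}=y(y-1)\cdots(y-r+2)/(r-1)!$ vanishes at exactly these $y$, so adding these terms changes nothing. This yields the formula for all $m\ge\max\{0,J_s-r+1\}$, and the right side is visibly a polynomial in $m$ of degree at most $r-1$.

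For the uniform cutoff, every nonzero coefficient of $A$ in class $s$ has $s+LJ_s\le\deg A$. Hence $J_s\le\lfloor\deg A/L\rfloor$, so \eqref{eq:residue-tail-cutoff} implies $m\ge J_s-r+1$ for every nonempty class. The only point needing care in the whole argument is the vanishing of the polynomial binomial on the window $0\le y\le r-2$; everything else is direct expansion.
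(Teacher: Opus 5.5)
Your proposal is correct and follows the paper's proof essentially step for step. Both arguments clear denominators to get $F=A/(1-x^L)^r$ and write the truncated convolution over $j\le\min\{m,J_s\}$. Both then extend it to all $j\le J_s$ using the zeros of the binomial polynomial at $0\le y\le r-2$, and both obtain the uniform cutoff from $J_s\le\lfloor\deg A/L\rfloor$.
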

\begin{proof}
Multiplication in \eqref{eq:residue-clearing} gives the complete
identity $F=A/(1-x^L)^r$. Expanding this denominator in formal
power series first yields
\[
 [x^{s+Lm}]F
   =\sum_{j=0}^{\min\{m,J_s\}}a_{s,j}
                     \binom{m-j+r-1}{r-1}.
\]
Suppose $m\ge J_s-r+1$. If $j>m$ and $j\le J_s$, then
$0\le m-j+r-1\le r-2$; hence the binomial polynomial at that
integer is zero. For $r=1$ there are no such $j$, since then
$m\ge J_s$. We may therefore include all $j\le J_s$, proving
\eqref{eq:residue-tail-formula}. Each binomial polynomial has degree
at most $r-1$, and $J_s\le\lfloor\deg A/L\rfloor$ gives the
uniform cutoff.
If a residue class of $A$ is empty, the initial coefficient formula
shows that the corresponding class of $F$ vanishes.
\end{proof}

For the even kernel below, $r=3$, $L=6$, and $\deg A\le95$,
so $m\ge13$ suffices and the tail starts at degrees $78+s$.
For each odd kernel, cancel its factor $1-x$ first. The remaining
denominator has $r=2$, $L=6$, and $\deg A\le29$; thus
$m\ge3$ suffices and the tails start at $18+s$.
The residue-$12$ identity in
\eqref{eq:l2-rational-certificate} has $r=3$ and
$\deg A=32$, giving cutoff zero. Thus the even and odd tails have degree at most two and one,
respectively. The tables below specify their coefficients.

\subsection{The even-slice identity}
Retain the polynomial $P_*$ from
\eqref{eq:l1-even-finite-numerator}, put $y=x^6$, and define
$C_r(y)=\sum_{n=0}^{12}c_{r,n}y^n$. Then
\begin{equation}\label{eq:l1-even-certificate}
 \frac{P_*}{(1-x^2)^2(1-x^3)}
 =\sum_{r=0}^5x^rC_r(y)
  +\sum_{r=0}^5x^{78+r}
   \left\{\frac{\alpha_r}{1-y}
    +\frac{\beta_ry}{(1-y)^2}+\frac{27y^2}{(1-y)^3}\right\}.
\end{equation}
The coefficients are
\begin{center}\small
\begin{tabular}{r|rrrrrrr}
$r$&$c_{r,0}$&$c_{r,1}$&$c_{r,2}$&$c_{r,3}$&$c_{r,4}$&$c_{r,5}$&$c_{r,6}$\\\hline
0 & 1 & 27 & 79 & 157 & 267 & 403 & 564\\
1 & 1 & 20 & 67 & 142 & 244 & 375 & 525\\
2 & 10 & 50 & 116 & 206 & 324 & 471 & 643\\
3 & -1 & 24 & 75 & 158 & 267 & 401 & 556\\
4 & 20 & 66 & 140 & 242 & 371 & 529 & 706\\
5 & 13 & 52 & 115 & 206 & 326 & 471 & 641\\
\end{tabular}
\medskip

\begin{tabular}{r|rrrrrr|rr}
$r$&$c_{r,7}$&$c_{r,8}$&$c_{r,9}$&$c_{r,10}$&$c_{r,11}$&$c_{r,12}$&$\alpha_r$&$\beta_r$\\\hline
0 & 746 & 965 & 1211 & 1483 & 1773 & 2099 & 2453 & 379\\
1 & 706 & 917 & 1156 & 1413 & 1700 & 2019 & 2362 & 370\\
2 & 840 & 1070 & 1326 & 1604 & 1906 & 2244 & 2606 & 388\\
3 & 748 & 967 & 1212 & 1475 & 1774 & 2101 & 2453 & 379\\
4 & 914 & 1152 & 1418 & 1702 & 2016 & 2362 & 2732 & 397\\
5 & 844 & 1073 & 1324 & 1599 & 1910 & 2245 & 2606 & 388\\
\end{tabular}
\end{center}
Clearing the denominator in~\eqref{eq:l1-even-certificate} gives
\begin{align}\label{eq:l1-even-cleared}
 P_*(1+x^2+x^4)^2(1+x^3)
 ={}&\sum_{r=0}^5x^r(1-x^6)^3C_r(x^6)\nonumber\\
 &+\sum_{r=0}^5x^{78+r}
 \{\alpha_r(1-x^6)^2+\beta_rx^6(1-x^6)+27x^{12}\}.
\end{align}
Both sides are polynomials of degree at most $95$.
Substitution of $P_*=1+8x^2+x(t_1+t_2+t_4)^2$ and the displayed
table verifies the identity by polynomial multiplication. Dividing by $(1-x^6)^3$ proves
\eqref{eq:l1-even-certificate} in the formal series ring.
The coefficient of the tail at degree $78+r+6m$ is
\[
 \alpha_r+\beta_rm+27\binom m2\qquad(m\ge0).
\]
Every such coefficient is positive. The prefix has only the negative
entry $c_{3,0}=-1$, and every entry at a degree at least $4$ is at least
$2$. This proves all assertions in
\eqref{eq:l1-even-rational-bounds}.

\subsection{The five odd-slice identities}
Let $N_i$ denote the numerator of the kernel indexed by
$i\in\{H_1,H_0,I,J_0,H\}$ in \eqref{eq:l1-five-kernels}. Define
\[
 Q_i(x)=\frac{N_i(x)}{(1-x)(1-x^2)(1-x^3)},\qquad y=x^6.
\]
All five numerators are divisible by $1-x$. One has
\begin{equation}\label{eq:l1-odd-certificate}
 Q_i(x)=\sum_{n=0}^{17}d_{i,n}x^n
   +\sum_{r=0}^5x^{18+r}
     \left\{\frac{a_{i,r}}{1-y}+\frac{s_i y}{(1-y)^2}\right\}.
\end{equation}
The prefix coefficients are
\begin{center}\small
\begin{tabular}{c|rrrrrrrrr}
$i$&$d_{i,0}$&$d_{i,1}$&$d_{i,2}$&$d_{i,3}$&$d_{i,4}$&$d_{i,5}$&$d_{i,6}$&$d_{i,7}$&$d_{i,8}$\\\hline
$H_1$ & 1 & 3 & 2 & 1 & 9 & 6 & 5 & 11 & 8\\
$H_0$ & 1 & 5 & 0 & 3 & 9 & 6 & 5 & 13 & 6\\
$I$ & 3 & 5 & 2 & 7 & 6 & 5 & 11 & 12 & 7\\
$J_0$ & 1 & 0 & 2 & 4 & 1 & 2 & 3 & 3 & 5\\
$H$ & 2 & 1 & 5 & 8 & 3 & 4 & 5 & 9 & 12\\
\end{tabular}
\medskip

\begin{tabular}{c|rrrrrrrrr}
$i$&$d_{i,9}$&$d_{i,10}$&$d_{i,11}$&$d_{i,12}$&$d_{i,13}$&$d_{i,14}$&$d_{i,15}$&$d_{i,16}$&$d_{i,17}$\\\hline
$H_1$ & 5 & 15 & 10 & 10 & 18 & 14 & 12 & 22 & 16\\
$H_0$ & 7 & 15 & 10 & 10 & 20 & 12 & 14 & 22 & 16\\
$I$ & 15 & 10 & 7 & 16 & 13 & 11 & 22 & 16 & 16\\
$J_0$ & 7 & 4 & 7 & 8 & 4 & 6 & 10 & 5 & 8\\
$H$ & 13 & 11 & 12 & 11 & 10 & 9 & 17 & 14 & 14\\
\end{tabular}
\end{center}
The tail data are
\begin{center}\small
\begin{tabular}{c|rrrrrr|r}
$i$&$a_{i,0}$&$a_{i,1}$&$a_{i,2}$&$a_{i,3}$&$a_{i,4}$&$a_{i,5}$&$s_i$\\\hline
$H_1$ & 16 & 24 & 20 & 18 & 28 & 22 & 6\\
$H_0$ & 16 & 26 & 18 & 20 & 28 & 22 & 6\\
$I$ & 24 & 20 & 18 & 28 & 22 & 22 & 6\\
$J_0$ & 12 & 8 & 9 & 14 & 9 & 11 & 3\\
$H$ & 22 & 20 & 16 & 26 & 22 & 20 & 6\\
\end{tabular}
\end{center}
For each row, the equivalent finite polynomial identity is
\begin{align}\label{eq:l1-odd-cleared}
 \frac{N_i(x)}{1-x}(1+x^2+x^4)(1+x^3)
 ={}&(1-x^6)^2\sum_{n=0}^{17}d_{i,n}x^n\nonumber\\
 &+\sum_{r=0}^5x^{18+r}\{a_{i,r}(1-x^6)+s_ix^6\}.
\end{align}
The quotient on the left is a polynomial; both sides have degree at
most $29$. Substituting the three polynomials in
\eqref{eq:l1-odd-bad-blocks} and the five explicit numerators in
\eqref{eq:l1-five-kernels}, direct multiplication verifies all five
identities. Division by $(1-x^6)^2$ proves
\eqref{eq:l1-odd-certificate} at every degree. Its tail coefficient at
$18+r+6m$ is $a_{i,r}+s_i m$ for $m\ge0$.

Every table entry is nonnegative. The $H_0$ row has precisely one zero,
at degree $2$; all other degrees have coefficient at least $1$.
The $H$ row has coefficient at least $1$ in every degree and coefficient
$3$ at degree $4$. The $H_1$ constant coefficient is $1$.
To recover the full kernels with denominator $E$, multiply $Q_i$ by
$\prod_{k\ge4}(1-x^k)^{-1}$, a nonnegative series with constant term
$1$ and no terms of degrees $1,2,3$. Thus these assertions give
\eqref{eq:l1-five-capacities}, including the exact zero at degree $2$
of $H_0$.

\section{Frobenius coordinates for the joint generating function}
\label{app:frobenius}

For coefficient computation, we specialize the classical Frobenius
constant-term method to BG-rank and integer crank
\cite{Andrews,BN26,CKY25}; for periodic content weights,
see~\cite[Section~3]{ChoFPSAC25}, the conference version of~\cite{Cho25}.

For a partition $\lambda$ with Durfee number
\[
 d=\max\bigl(\{i\ge1:\lambda_i\ge i\}\cup\{0\}\bigr),
\]
its \emph{Frobenius coordinates} are the arm and leg lengths
\[
 \alpha_i=\lambda_i-i,\qquad \beta_i=\lambda'_i-i
 \qquad(1\le i\le d).
\]
These are strictly decreasing nonnegative sequences; conversely,
any two such sequences of equal length determine a unique partition.
Let $\eta$ be the $5$-periodic sequence
\[
 (\eta_0,\eta_1,\eta_2,\eta_3,\eta_4)=(0,1,1,0,-2).
\]

\begin{lemma}[Frobenius specialization]\label{lem:frobenius-weights}
For every partition, including $\varnothing$,
\begin{align}
 |\lambda|&=\sum_{i=1}^d(\alpha_i+\beta_i+1),
       \label{eq:frobenius-size}\\
 \BG(\lambda)&=\sum_{i=1}^d
       \bigl(\mathbf1_{\alpha_i\text{ even}}
             -\mathbf1_{\beta_i\text{ odd}}\bigr),
       \label{eq:frobenius-bg}\\
 \cc(\lambda)&=1+\sum_{i=1}^d
       (\eta_{\alpha_i}-\eta_{-\beta_i-1}).
       \label{eq:frobenius-crank}
\end{align}
Consequently, for independent formal variables $t,v,u,z$,
\begin{equation}\label{eq:frobenius-product}
\begin{split}
 &\sum_\lambda t^{|\lambda|}u^{\BG(\lambda)}z^{\cc(\lambda)}\\
 &\quad=z[v^0]\left\{
 \prod_{a\ge0}(1+v\,u^{\mathbf1_{a\text{ even}}}
                    z^{\eta_a}t^{a+1})
 \prod_{b\ge0}(1+v^{-1}u^{-\mathbf1_{b\text{ odd}}}
                    z^{-\eta_{-b-1}}t^b)\right\}.
\end{split}
\end{equation}
\end{lemma}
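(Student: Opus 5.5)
Each of the three weight formulas will be checked cell by cell along the Frobenius hooks, and the product formula will then follow from the standard constant-term encoding of pairs of strict sequences. For the size, the $i$th diagonal hook of $\lambda$ consists of the cell $(i,i)$, the $\alpha_i$ cells $(i,k)$ with $k>i$, and the $\beta_i$ cells $(k,i)$ with $k>i$. These hooks partition the diagram, which gives \eqref{eq:frobenius-size}. The same decomposition reduces \eqref{eq:frobenius-bg} and \eqref{eq:frobenius-crank} to hook-by-hook identities.

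For the BG-rank we use the checkerboard form \eqref{eq:bg-def}. The arm cells together with the diagonal cell are the row segment $(i,i),\ldots,(i,i+\alpha_i)$, with signs $(-1)^{2i+m}=(-1)^m$ for $m=0,\ldots,\alpha_i$. Their sum is $1$ when $\alpha_i$ is even and $0$ when it is odd. The leg cells $(i+m,i)$ for $m=1,\ldots,\beta_i$ carry the signs $-1,+1,\ldots$, whose sum is $-\mathbf1_{\beta_i\text{ odd}}$. This proves \eqref{eq:frobenius-bg}.

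For the crank we use $\cc(\lambda)=1+2(R_0-R_4)+R_1-R_3$. The key observation is that this is $1+\sum_{\text{cells}}w(\text{content})$ with the $5$-periodic weight $w=(2,1,0,-1,-2)$ on residues $0,\ldots,4$. Put $\eta_a=\sum_{c=0}^{a}w(c)$ as partial sums. These are periodic because $\sum_{c=0}^4 w(c)=0$, and one checks $\eta=(2,3,3,2,0)$. This differs from the stated $(0,1,1,0,-2)$ by the constant $2$, so the real normalization is the prefix $\eta_a=\sum_{c=0}^{a}w(c)-2$, equivalently $\sum_{c=1}^{a}w(c)$. I will fix the convention so that the arm row contributes $\sum_{c=0}^{\alpha_i}w(c)$ and the leg contributes $\sum_{c=-\beta_i}^{-1}w(c)$. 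Writing both as differences of a periodic primitive $\widetilde\eta$ with $\widetilde\eta_m-\widetilde\eta_{m-1}=w(m)$, we get
\[
 \widetilde\eta_{\alpha_i}-\widetilde\eta_{-1}+\widetilde\eta_{-1}-\widetilde\eta_{-\beta_i-1}
 =\widetilde\eta_{\alpha_i}-\widetilde\eta_{-\beta_i-1}.
\]
The additive constant cancels here, so any primitive works. Checking that $(0,1,1,0,-2)$ has successive differences $w(1),\ldots,w(4),w(0)$, namely $1,0,-1,-2,2$, confirms that it is such a primitive. This step, getting the indexing and offset of $\eta$ right, is the only place where care is needed, and I expect it to be the main, though minor, obstacle.

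For \eqref{eq:frobenius-product}, expanding the two products chooses finite sets $\{a\}$ and $\{b\}$ of distinct nonnegative integers. The power of $v$ is the difference of their cardinalities, so $[v^0]$ keeps exactly the pairs of equal-length strictly decreasing sequences, and these are in bijection with partitions. Each chosen pair contributes $t^{a+1}t^{b}$ (total $t^{\alpha+\beta+1}$), $u^{\mathbf1_{a\text{ even}}-\mathbf1_{b\text{ odd}}}$ and $z^{\eta_a-\eta_{-b-1}}$, while the prefactor $z$ supplies the constant $1$. Matching these against the three formulas above gives the identity. Every $t$-coefficient is a finite Laurent polynomial, since each factor contributes positive $t$-degree except for the $b=0$ factor, and that one is paired with an $a$-factor of degree at least $1$ under the $v^0$ constraint. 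Hence the constant-term extraction is legitimate.
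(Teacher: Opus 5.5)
Your proof is correct and is essentially the paper's argument. The diagonal-hook decomposition gives \eqref{eq:frobenius-size} and \eqref{eq:frobenius-bg}, and your telescoping of a periodic primitive of the content weight $w=(2,1,0,-1,-2)$ is exactly the paper's pair of identities $\sum_{c=0}^{a}w(c)=2+\eta_a$ and $\sum_{c=1}^{b}w(-c)=-2-\eta_{-b-1}$; the constant-term encoding is also the same. One minor remark: legitimacy of $[v^0]$ does not need the $v^0$ constraint at all, since the $b=0$ factor is the single Laurent polynomial $1+v^{-1}z^{2}$ and the whole product therefore already lies in $\mathbb{Z}[v^{\pm1},u^{\pm1},z^{\pm1}][[t]]$.
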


\begin{proof}
The $i$th diagonal hook has $\alpha_i+\beta_i+1$ cells.
Its horizontal segment, including the diagonal cell, has BG sum
$\mathbf1_{\alpha_i\text{ even}}$; its remaining vertical segment
has sum $-\mathbf1_{\beta_i\text{ odd}}$. Summing the disjoint
hooks proves \eqref{eq:frobenius-size} and \eqref{eq:frobenius-bg}.

In \eqref{eq:crank-def}, a cell of content $r$ has the $5$-periodic
weight $\gamma_r$, with
\[
 (\gamma_0,\gamma_1,\gamma_2,\gamma_3,\gamma_4)=(2,1,0,-1,-2).
\]
Since these sum to zero, checking five residues gives, for $a,b\ge0$,
\[
 \sum_{r=0}^{a}\gamma_r=2+\eta_a,\qquad
 \sum_{r=1}^{b}\gamma_{-r}=-2-\eta_{-b-1}.
\]
The second identity includes $b=0$, since $\eta_{-1}=-2$.
Thus a diagonal hook contributes
$\eta_{\alpha_i}-\eta_{-\beta_i-1}$; the constant $1$ in the crank
gives \eqref{eq:frobenius-crank}.

The two products choose distinct arm and leg lengths, and $[v^0]$
equates their cardinalities. Sorting gives the unique Frobenius
coordinates, with weights \eqref{eq:frobenius-size}--\eqref{eq:frobenius-crank}
and prefactor $z$ for the crank constant.
Only the nonconstant term of the $b=0$ factor has $t$-degree zero;
the nonconstant terms of all other factors have positive degree.
The product therefore lies in
$\Z[v^{\pm1},u^{\pm1},z^{\pm1}][[t]]$, so the constant term is legitimate.
\end{proof}

The constant term in $t$ is $z$, the weight of $\varnothing$.
To compute through $t^N$ ($N\ge1$), retain only $0\le a,b\le N-1$:
an arm $a\ge N$ costs more than $N$, while a leg $b\ge N$
requires an arm of positive degree in $[v^0]$.
Selecting $t^m$ with $m\equiv4\pmod5$ and replacing it by
$q^{(m+1)/5}$ gives the corresponding part of $\Psi$.
The positive quotient with nine Euler factors is supplied by the
lattice argument above.

\section*{Declaration of generative AI and AI-assisted technologies
in the manuscript preparation process}

During the preparation of this work, the authors used AI-assisted tools
to assist with language editing, manuscript organization,
proof checking, and symbolic verification.
The authors reviewed and edited the resulting material and take
full responsibility for the contents of the manuscript.

\end{document}